 \newtheorem{thm}{Theorem}[section]
 \newtheorem{cor}[thm]{Corollary}
 \newtheorem{lem}[thm]{Lemma}
 \newtheorem{prop}[thm]{Proposition}
 \theoremstyle{definition}
 \theoremstyle{remark}
 \newtheorem{rem}[thm]{Remark}
 \newtheorem{rems}[thm]{Remarks}
 \numberwithin{equation}{section}
\newcommand{\hs}{\hskip-0.1em}
\newcommand{\vanish}[1]{\relax}       
\newcommand{\emdf}{\bf}               
\def\qedsymbol{\hbox to 1ex{\llap{\rule{0.25pt}{1ex}}\rlap{\rule{1ex}{0.25pt}}\lower0.25pt\rlap{\raise1ex\rlap{\rule{1ex}{0.25pt}}}\hskip1ex\llap{\rule{0.25pt}{1ex}}}}
\def\rlqed{\rlap{\rule{\hsize}{0pt}\kern-1ex\kern-1em\qed}} 
\newcommand{\fin}{\mathrm{fin}}
\newcommand{\Ext}{\mathrm{Ext}}
\newcommand{\Res}{\mathrm{Res}}
\newcounter{aufzi}
\newenvironment{aufzi}{\begin{list}{ {\upshape\alph{aufzi})}}{
        \usecounter{aufzi}
        \topsep1ex
        \parsep0cm
        \itemsep0.8ex
        \leftmargin0.7cm  
        \labelwidth0.5cm
        \labelsep0.3cm
}}
{\end{list}}
\newcounter{aufzii}
\newenvironment{aufzii}{\begin{list}{\hfill {\upshape
(\roman{aufzii})}}{
        \usecounter{aufzii}
        \topsep1ex
        \parsep0cm
        \itemsep1ex
        \leftmargin1cm
        \labelwidth0.5cm
        \labelsep0.3cm
}}
{\end{list}}
\newcounter{aufziii}
\newenvironment{aufziii}{\begin{list}{ {\upshape\arabic{aufziii})}}{
        \usecounter{aufziii}
        \topsep1ex
        \parsep0cm
        \itemsep0.8ex
        \leftmargin0.7cm  
        \labelwidth0.5cm
        \labelsep0.3cm
}}
{\end{list}}
\newcommand{\calB}{\mathcal{B}}
\newcommand{\calL}{\mathcal{L}}
\newcommand{\calM}{\mathcal{M}}
\def\bff{\mathbf{f}}
\def\ud{\mathrm{d}}   
\def\ue{\mathrm{e}}
\def\ui{\mathrm{i}}
\newcommand{\upi}{\pi}      
\renewcommand{\ud}{\mathrm{d}}    
\renewcommand{\ue}{\mathrm{e}}    
\renewcommand{\ui}{\mathrm{i}}    
\newcommand{\car}{\mathbf{1}}           
\newcommand{\veps}{\varepsilon}
\newcommand{\vphi}{\varphi}
\newcommand{\T}{\mathbb{T}}
\newcommand{\R}{\mathbb{R}}     
\newcommand{\N}{\mathbb{N}}
\newcommand{\C}{\mathbb{C}}
\newcommand{\torus}{\T}          
\newcommand{\st}{\,:\,}     
\newcommand{\set}[1]{\hs\left[\,#1\,\right]}   
\newcommand{\ohne}{\setminus}
\newcommand{\Bigcap}[2][\relax]{%
 \ifx#1\relax \bigcap_{#2}
 \else \bigcap^{#1}_{#2}
 \fi}
\newcommand{\Bigcup}[2][\relax]{%
 \ifx#1\relax \bigcup_{#2}
 \else \bigcup^{#1}_{#2}
 \fi}
\newcommand{\Dan}{\,\Longrightarrow\,}
\newcommand{\nach}{\circ}
\DeclareMathOperator{\dom}{dom}
\DeclareMathOperator{\ran}{ran}
\newcommand{\Inf}{\,\wedge\,}
\newcommand{\conj}[1]{\overline{#1}}   
\DeclareMathOperator{\re}{Re}          
\DeclareMathOperator{\im}{Im}          
\newcommand{\abs}[1]{\left\vert#1\right\vert}   
\def\Re{\re}      
\def\Im{\im}
\newcommand{\Ball}{\mathrm{B}}          
\newcommand{\tensor}{\otimes}
\def\fact#1#2{#1/#2}
\def\tfact#1#2{#1/#2}
\def\fact#1#2{{\raise0.2em\hbox{$#1$}\kern-0.2em/\kern-0.1em\lower0.2em\hbox{$#2$}}}
\def\tfact#1#2{{\raise0.1em\hbox{\small$#1$}\kern-0.1em/\kern-0.1em\lower0.1em\hbox{\small$#2$}}}
\newcommand{\norm}[2][\relax]{
   \ifx#1\relax \ensuremath{\left\Vert#2\right\Vert}
   \else \ensuremath{\left\Vert#2\right\Vert_{#1}}
   \fi}
\newcommand{\Bnorm}[2][\relax]{
   \ifx#1\relax \ensuremath{\Bigl\Vert#2\Bigr\Vert}
   \else \ensuremath{\Bigl\Vert#2\Bigr\Vert_{#1}}
   \fi}
\def\Id{\mathrm{I}}           
\newcommand{\BL}{\calL}                
\newcommand{\tdprod}[2]{\ensuremath{%
  \setbox0=\hbox{\ensuremath{\langle#1,#2 \rangle}}
  \dimen@\ht0
  \advance\dimen@ by \dp0 \langle#1\rule[-\dp0]{0pt}{\dimen@}\,,#2\hspace{1pt}\rangle}}
\newcommand{\dprod}[2]{\ensuremath{%
  \setbox0=\hbox{\ensuremath{\left\langle#1,#2\right\rangle}}
  \dimen@\ht0
  \advance\dimen@ by \dp0 \left\langle\left.#1\rule[-\dp0]{0pt}{\dimen@},\right.#2\hspace{1pt}\right\rangle}}
\newcommand{\bdprod}[2]{\ensuremath{%
  \setbox0=\hbox{\ensuremath{\bigl\langle#1,#2\bigr\rangle}}
  \dimen@\ht0
  \advance\dimen@ by \dp0 \bigl\langle#1\bigl|\rule[-\dp0]{0pt}{\dimen@}\bigr.#2\hspace{1pt}\bigr\rangle}}
\newcommand{\Bdprod}[2]{\ensuremath{%
  \setbox0=\hbox{\ensuremath{\Bigl\langle#1,#2\Bigr\rangle}}
  \dimen@\ht0
  \advance\dimen@ by \dp0 \Bigl\langle#1\Bigl|\rule[-\dp0]{0pt}{\dimen@}\Bigr.#2\hspace{1pt}\Bigr\rangle}}
\newcommand{\tsprod}[2]{\ensuremath{%
  \setbox0=\hbox{\ensuremath{(#1,#2)}}
  \dimen@\ht0
  \advance\dimen@ by \dp0 (#1\rule[-\dp0]{0pt}{\dimen@}\,|#2\hspace{1pt})}}
\newcommand{\sprod}[2]{\ensuremath{%
  \setbox0=\hbox{\ensuremath{\left(#1,#2\right)}}
  \dimen@\ht0
  \advance\dimen@ by \dp0 \left(\left.#1\rule[-\dp0]{0pt}{\dimen@}\,\right|#2\hspace{1pt}\right)}}
\newcommand{\bsprod}[2]{\ensuremath{%
  \setbox0=\hbox{\ensuremath{\bigl(#1,#2\bigr)}}
  \dimen@\ht0
  \advance\dimen@ by \dp0 \bigl(#1\bigl|\rule[-\dp0]{0pt}{\dimen@}\bigr.#2\hspace{1pt}\bigr)}}
\newcommand{\Bsprod}[2]{\ensuremath{%
  \setbox0=\hbox{\ensuremath{\Bigl(#1,#2\Bigr)}}
  \dimen@\ht0
  \advance\dimen@ by \dp0 \Bigl(#1\Bigl|\rule[-\dp0]{0pt}{\dimen@}\Bigr.#2\hspace{1pt}\Bigr)}}
\newcommand{\Ce}{\mathrm{C}}
\newcommand{\Cc}{\mathrm{C}_\mathrm{c}}
\newcommand{\Co}[1][\relax]{%
\mathrm{C}_\mathrm{0}^\mathrm{#1}}
\newcommand{\Ell}[2][\relax]{
   \ifx#1\relax \mathrm{L}^{\mathrm{#2}}
   \else \mathrm{L}^{\mathrm{#2}}_{\mathrm{#1}}
   \fi}
\renewcommand{\Ell}[2][\relax]{
   \ifx#1\relax \mathrm{L}^{\!#2}
   \else \mathrm{L}^{\!#2}_{\mathrm{#1}}
   \fi}
\newcommand{\Wee}[2][\relax]{
   \ifx#1\relax \mathrm{W}^{\mathrm{#2}}
   \else \mathrm{W}^{\mathrm{#2}}_{\mathrm{#1}}
   \fi}
\newcommand{\Har}[2][\relax]{
   \ifx#1\relax \mathsf{H}^{\mathsf{#2}}
   \else   \mathsf{H}^{\mathsf{#2}}_{\mathrm{#1}}
   \fi}
\newcommand{\eM}{\mathrm{M}}     
\def\prX{\mathrm X}    
\def\prY{\mathrm Y}
\def\prZ{\mathrm Z}
\def\rlqed{\rlap{\rule{\hsize}{0pt}\kern-1ex\kern-1em\qed}}
\def\maketag@@@@@#1{\llap{\hbox to\hsize{\m@th\normalfont#1}}%
\gdef\tagform@##1{\maketag@@@{(\ignorespaces##1\unskip\@@italiccorr)}}}
\def\eqtext#1{\gdef\tagform@##1{\maketag@@@@@{\ignorespaces##1\unskip\@@italiccorr\hfill}}\tag{#1}}%
\def\reqtext#1{\gdef\tagform@##1{\maketag@@@@@{\hfill\ignorespaces##1\unskip\@@italiccorr}}\tag{#1}}%
\def\leqtext#1{\gdef\tagform@##1{\maketag@@@@@{\ignorespaces##1\unskip\@@italiccorr}}\tag{#1}}%
\newcommand{\Eff}{F}
\newcommand{\Gee}{G}
\newcommand{\Dragicevic}{Dragi\v{c}evi\'c}
\newcommand{\linabs}[1]{\abs{#1}\!}
\begin{document}

%
%
%
%
%
%
%
%
%

\title[Form Inequalities]{Form Inequalities for  
Symmetric Contraction Semigroups}

\author[Haase]{Markus Haase}

\address{%
Christian-Albrechts Universit\"at zu Kiel\\
Mathematisches Seminar\\
Ludewig-Meyn-Stra\ss e 4, 
24098 Kiel, Germany}

\email{haase@math.uni-kiel.de}

\thanks{Part of this work was 
supported by the Marsden Fund Council from Government funding, 
administered by the Royal Society of New Zealand.}

\vanish{
\author{A Second Author}
\address{The address of\br
the second author\br
sitting somewhere\br
in the world}
\email{dont@know.who.knows}
}

\subjclass{47A60, 47D06, 47D07, 47A07}

\keywords{symmetric contraction semigroup, diffusion semigroup, 
sector of analyticity, 
Stone model, integral bilinear forms, 
tensor products}

\date{\today}

\begin{abstract}
Consider --- for the generator ${-}A$ of a symmetric contraction semigroup
over some measure space $\prX$, $1\le p < \infty$, $q$ the dual exponent
and  given measurable functions
$\Eff_j,\: \Gee_j : \C^d \to \C$ --- the  statement:
\[ \Re \sum_{j=1}^m \int_\prX A\Eff_j(\bff) \cdot
\Gee_j(\bff) \,\,\ge \,\,0
\]
{\em for  all  $\C^d$-valued measurable functions $\bff$ on $\prX$
such that $\Eff_j(\bff) \in \dom(A_p)$ and $\Gee_j(\bff)
\in \Ell{q}(\prX)$ for all $j$.}

It is shown that this statement is valid in general if it is valid
for  $\prX$ being a two-point Bernoulli $(\frac{1}{2}, \frac{1}{2})$-space 
and $A$ being of a special form. 
As a consequence we obtain a new proof for 
the optimal angle of $\Ell{p}$-analyticity for such semigroups, which 
is essentially the same as in the well-known sub-Markovian case.

The proof of the main theorem is a combination of well-known reduction 
techniques and some representation results about operators 
on  $\Ce(K)$-spaces. One focus of the paper lies on presenting
these auxiliary techniques and results in great detail. 
\end{abstract}

\maketitle

Second Version, {\em 29 August, 2015}.

\section{Introduction}

In the recent preprint \cite{CarbDrag13Pre}, A.{} Carbonaro and
O.{} \Dragicevic\ consider
symmetric contraction semigroups $(S_t)_{t\ge 0}$ over some 
measure space $\prX= (X, \Sigma,\mu)$ and prove so-called
spectral multiplier results (= functional calculus estimates)  
for $A_p$, where $-A_p$ is the generator of $(S_t)_{t\ge 0}$ 
on $\Ell{p}(\prX)$, $1\le p < \infty$.

Their proof consists of three major steps. In the first one,
the authors show how to generate functional calculus estimates for the
operator $A= A_p$ from  {\em form inequalities}
of the type
\begin{equation}\label{int.eq:ineq} 
\sum_{j=1}^m \re \int_X [A \Eff_j(f_1, \dots, f_d)] \cdot \Gee_j(f_1, \dots, f_d)\, \ud{\mu} \ge 0,
\end{equation}
where $\Eff_j$ and $\Gee_j$ are  
measurable functions $\C^d \to \C$ with certain properties
and $(f_1 \dots, f_d)$ varies over a suitable
subset of measurable functions on $\prX$. This first step is based on 
the so called {\em heat-flow method}.
In the second step, the authors show how to find 
functions $F_j$ and $G_j$ with the desired properties by employing
a so-called {\em Bellman function}.
Their third step consists in establishing the inequality \eqref{int.eq:ineq}
by reducing the problem to the case that 
$A = \Id - E_\lambda$ on $\C^2$, where
\[ E_\lambda = \begin{pmatrix} 0 & \conj{\lambda}  \\ \lambda &  0\end{pmatrix}, 
\qquad (\lambda\in \torus).
\]
The underlying reduction procedure is actually well-known in the literature,
but has been used mainly for symmetric {\em sub-Markovian} semigroups, i.e.,
under the additional assumption that all $S_t\ge 0$. Here, the last
step becomes considerably simpler, since then one need only consider the cases
$A= \Id -  E_1$ and $A= \Id$.

\medskip
\noindent
One intention with the present paper is to look
more carefully at
the employed reduction techniques (Section \ref{s.red})
and prove a general theorem
(Theorem \ref{mr.t:main})
that puts the abovementioned ``third step'' on a formal basis.
Where the authors of \cite{CarbDrag13Pre}
confine their arguments to their specific case of Bellman functions,
here we treat general functions $F_j$ and $G_j$ and hence
pave the way for further applications. 

It turns out that the heart of the matter are results about
representing bilinear forms $(f,g) \mapsto 
\int_L Tf\cdot g \, \ud{\mu}$
as integrals over product spaces like
\[ \int_{L} Tf\cdot g\, \ud{\mu} = \int_{K \times L} f(x)g(y)\, \ud{\mu_T(x,y)}.
\]
(Here, $K$ and $L$ are compact spaces, $\mu$ is
a positive regular Borel measure on $L$ and $T: \Ce(K) \to \Ell{1}(L,\mu)$ is
a linear operator.) These results go back to Grothendieck's 
work on tensor products and ``integral'' bilinear forms \cite{GrothendieckPT}.
They are ``well-known'' in the sense that they could ---
on a careful reading --- be obtained  from standard texts on tensor products 
and Banach lattices, such as \cite[Chap.{} IV]{Schaefer1974}. However,
it seems that the communities of those people who are familiar with these
facts in their abstract form  
and those who would like to apply them to more concrete situations 
are almost disjoint. Our exposition, forming the contents of Section 
\ref{s.ot},  can thus be viewed as an attempt to 
increase the intersection of these two communities.

After this excursion into abstract operator theory, 
in Section \ref{s.prf} we turn back
to the proof of Theorem \ref{mr.t:main}. Then, 
 as an application, we consider the question about the
optimal {\em angle of analyticity} on $\Ell{p}$ of
a symmetric contraction semigroup $(S_t)_{t\ge 0}$. For the sub-Markovian
case this question has been answered long ago, in fact, by the very methods
which we just mentioned and which  form the core content of this paper. 
The general symmetric case has only recently been settled by 
Kriegler in \cite{Kriegler2011}.
Kriegler's proof rests on arguments from non-commutative
operator theory, but Carbonaro and \Dragicevic\ show in \cite{CarbDrag13Pre}
that the result can also be derived as a corollary from their results involving
Bellman-functions. We shall point out in Section \ref{s.soa} below
that the Bellman function of Carbonaro and \Dragicevic\ 
is not really needed here, and 
that one can prove the general case by essentially the same
arguments as used in the sub-Markovian case.


\bigskip

\paragraph{Terminology and Notation} 
In this paper, $\prX:= (X, \Sigma,\mu)$ denotes  a general measure space.
(Sometimes we shall suppose in addition that $\mu$ is a finite measure, but
we shall always make this explicit.) Integration with respect to $\mu$
is abbreviated by 
\[ \int_\prX  f := \int_X f \, \ud{\mu}
\]
whenever it is convenient. 
The corresponding $\Ell{p}$-space for $0 < p\le \infty$ is denoted by 
$\Ell{p}(\prX)$, but if the underlying measure space is
understood, we shall simply write $\Ell{p}$.
Whenever $1 \le p \le \infty$ is fixed we 
denote by $q$ the {\em dual exponent}, i.e., 
the unique number $q\in [1, \infty]$ such that $\frac{1}{p} + \frac{1}{q} = 1$. 

With the symbol  $\calM(\prX; \C^d)$ ($\calM(\prX)$ in the case $d{=}1$)
we denote the
space of $\C^d$-valued measurable functions on $\prX$, modulo
equality almost everywhere. 
 We shall tacitly identify  $\calM(\prX; \C^d)$ with 
$\calM(\prX)^d$ and use the notation
\[ \bff = (f_1, \dots, f_d)
\]
to denote functions into $\C^d$. For a set $M \subseteq \C^d$ and
$\bff= (f_1, \dots, f_d) \in \calM(\prX;\C^d)$ as above, we write
``$(f_1, \dots, f_d)\in M\quad \text{almost everywhere}$'' 
shorthand for: ``$(f_1(x), \dots, f_d(x))\in M$ for $\mu$-almost all 
$x\in X$.''   By abuse of notation, if $F: \C^d\to \C$ 
is measurable
and $\bff \in \calM(\prX;\C^d)$ 
we write $F(\bff)$ to denote the function $F\nach \bff$, i.e., $F(\bff)(x) = F( f_1(x), \dots, f_d(x))$.

The letters $K, L, \dots $ usually denote compact and sometimes
locally compact  Hausdorff spaces.
We abbreviate this by simply saying that $K,\:L,\dots$ are (locally) compact.
If $K$ is locally compact, then $\Cc(K)$ denote the space
of continuous functions on $K$ with  compact support, and 
$\Co(K)$ is the sup-norm closure  of $\Cc(K)$ within the Banach
space of all bounded continuous functions. If $K$ is compact, then of course
$\Cc(K) = \Co(K) = \Ce(K)$.  

If $K$ is (locally) compact then, by the Riesz representation theorem,
 the dual space of $\Ce(K)$ ($\Co(K)$) is isometrically and lattice isomorphic
to $\eM(K)$, the space of complex regular  Borel measures on $K$, 
with the total variation (norm) as absolute value (norm). 
A {\em (locally) compact measure space} is  a pair $(K,\nu)$ where $K$ is 
(locally) compact and $\nu$ is a positive regular Borel measure on $K$.
(If $K$ is locally compact, the measure $\nu$ need not be finite.)

We work with complex Banach spaces by default. In particular, 
$\Ell{p}$-spaces have to be understood as consisting of complex-valued 
functions. For an operator $T$ with domain and range being
spaces of complex-valued functions, 
the {\em conjugate operator} is defined by $\conj{T}f := \conj{T\conj{f}}$,
and the {\em real part} and {\em imaginary part} are defined by
\[ \Re T := \tfrac{1}{2} (T + \conj{T}) \quad \text{and}\quad
\Im  T := \tfrac{1}{2\ui} (T - \conj{T}), 
\]
respectively. 
For Banach spaces $E$ and $F$ we use the symbol $\BL(E;F)$ to denote
the space of bounded linear operators from $E$ to $F$ and 
$E'= \BL(E;\C)$ for the dual space. The dual of an operator
$T \in \BL(E;F)$ is denoted by $T'\in \BL(F',E')$.

If $K$ is locally compact, $\prX = (X, \Sigma, \mu)$ is a measure space and $T: \Cc(K) \to \Ell{1}(\prX)$ is 
a linear operator, then $T'\mu$ denotes the linear functional on $\Cc(K)$ defined by 
\[  \tdprod{f}{T'\mu} := \int_X Tf \, \ud{\mu}\qquad (f\in \Cc(K)).
\]
If $T$ is bounded for the uniform norm on $\Cc(K)$ then  $T'\mu$ is bounded too, 
and we identify it with the complex regular Borel measure in $\eM(K)$. If $T$ is not bounded but
positive, then, again by the Riesz representation  theorem,
$T'\mu$ can be identified with a  positive (but infinite) regular Borel measure on $K$.

At some places we use some basic notions of Banach lattice
theory (e.g., lattice homomorphism, ideal, order completeness). 
The reader unfamiliar with this terminology can consult 
\cite[Chap.{} 7]{EFHNPre} for a brief account. However,
the only Banach lattice that appears here and is not a function space will be
$\eM(K)$, where $K$ is locally compact.

\section{Main Results}\label{s.mr}

An {\emdf absolute contraction}, 
or a  {\emdf Dunford--Schwartz operator},  over a measure space $\prX$ 
is an operator $T: \Ell{1} \cap \Ell{\infty} \to \Ell{1} + \Ell{\infty}$
satisfying $\norm{Tf}_p \le \norm{f}_p$ for  $p=1$ and $p = \infty$. 
It is then well-known that 
$T$ extends uniquely and consistently to linear contraction operators
$T_p: \Ell{p} \to \Ell{p}$ for $1 \le p < \infty$, and
$T_\infty: \Ell{(\infty)} \to \Ell{(\infty)}$, where
$\Ell{(\infty)}$ is the closed linear hull of $\Ell{1} \cap \Ell{\infty}$
within $\Ell{\infty}$. 
It is common to use the single symbol  $T$ for each
of the operators $T_p$.

An absolute contraction  $T$ is {\emdf sub-Markovian} if
it is {\emdf positive}, i.e., if $Tf\ge 0$ whenever $f\ge 0$, $f\in \Ell{1}
\cap \Ell{\infty}$. (Then also its canonical extension $T_p$ to 
$\Ell{p}$, $1\le p < \infty$  and $\Ell{(\infty)}$, $p = \infty$, is positive.) 
This terminology is coherent with \cite[Def.{} 2.12]{Ouhabaz2005}.

An absolute contraction $T$  is called {\emdf Markovian}, if it satifies
\[ f \le b\car \quad \Dan \quad Tf \le b \car
\]
for every $b\in \R$ and $f\in \Ell{1}\cap \Ell{\infty}$.  (Here, $\car$ is the constant function with value equal to $1$.) 
In particular, $T$ is positive, i.e., sub-Markovian. If the measure space $\prX$ is finite, 
an absolute contraction is Markovian if and only if $T$ is positive and $T\car = \car$. This is easy to see,
cf.{} \cite[Lemma 3.2]{Haase2007}.

An operator $T: \Ell{1} \cap \Ell{\infty} \to \Ell{1} + \Ell{\infty}$ is
{\emdf symmetric} if 
\[ \int_\prX Tf \cdot \conj{g}  = \int_\prX f \cdot  \conj{Tg}\, 
\]
for all $f,g \in \Ell{1} \cap \Ell{\infty}$. A symmetric operator
is an absolute contraction if and only if it is $\Ell{\infty}$-contractive
if and only if it is $\Ell{1}$-contractive; and in this case
the canonical extension to $\Ell{2}$ is a bounded self-adjoint operator.

\medskip


\noindent
A (strongly continuous)
{\emdf absolute contraction semigroup} over $\prX$ 
is a family $(S_t)_{t\ge 0}$ of
absolute contractions on $\prX$ 
such that $S_0 = \Id$, $S_{t+s} = S_t S_s$ for all 
$t, s \ge 0$ and 
\begin{equation}\label{mr.eq:C_0}
 \norm{f - S_tf}_p \to 0 \quad \text{as}\quad  t\searrow 0
\end{equation}
for all $f\in \Ell{1}\cap \Ell{\infty}$ and all $1\le p < \infty$. 
It follows that the operator family $(S_t)_{t\ge 0}$ can be considered
a strongly continuous semigroup on each space $\Ell{p}$, $1\le p < \infty$. 
{\em We shall always assume this continuity property even when it is
not explicitly mentioned.}
An absolute contraction semigroup $(S_t)_{t\ge 0}$ is called 
a  {\emdf symmetric contraction semigroup} 
({\emdf symmetric (sub-)Markovian semigroup}) if
each operator $S_t$, $t \ge 0$, is symmetric (symmetric and (sub-)Markovian).

\begin{rems}
\begin{aufziii}
\item A symmetric sub-Markovian semigroup is called a ``symmetric diffusion semigroup'' in the classical text
\cite{Stein1970}. It appears that the ``diffusion semigroups'' of operator space theory \cite[Def.{} 2]{Kriegler2011}
lack the property of
positivity, and hence do not specialize to Stein's concept in the commutative case, but rather to what
we call ``symmetric contraction semigroups'' here.

\item As Voigt \cite{Voigt1992} has shown, the strong continuity assumption \eqref{mr.eq:C_0} 
for $p \neq 2$ is a consequence of the case $p = 2$ together with 
the requirement that all operators $S_t$ are $\Ell{p}$-contractions.
\end{aufziii}
\end{rems}

\noindent
Given an absolute contraction semigroup $(S_t)_{t\ge 0}$ one can consider, for 
$1\le p < \infty$, the negative generator $-A_p$ of the strongly continuous 
semigroup $(S_t)_{t\ge 0}$ on $\Ell{p}$, defined by 
\begin{align*}
\dom(A_p) =&  \{ f\in \Ell{p} \st \lim_{t \searrow 0} \tfrac{1}{t}(f - S_tf)
\,\,\text{exists in $\Ell{p}$}\},\\
A_p f & = \lim_{t \searrow 0} \tfrac{1}{t}(f - S_tf).
\end{align*}
The operators $A_p$ are compatible for different indices $p$, a fact 
which is easily seen by looking at the resolvent of $A_p$
\[ (\Id + A_p)^{-1}f = \int_0^\infty \ue^{-t} S_tf\, \ud{t} 
\qquad (f\in \Ell{p}, \, 1\le p <\infty).
\] 
Hence, it is reasonable to drop the index $p$ and simply write $A$ instead
of $A_p$.


\medskip
\noindent
In order to formulate the main result, we first look 
at the very special case that the underlying measure
space consists of two atoms with equal mass.
Let this (probability) space be denoted by  $\prZ_2$, i.e.,
\[ \prZ_2 := ( \{0,1\}, 2^{\{0, 1\}}, \zeta_2).
\] 
Then, for $1\le p < \infty$, $\Ell{p}(\prZ_2) = \C^2$ with norm
\[ \bigl\| \begin{pmatrix} z_1 \\ z_2 \end{pmatrix} \bigr\|_p^p = \tfrac{1}{2}
 ( \abs{z_1}^p + 
\abs{z_2}^p).
\]
The scalar product on the Hilbert space $H = \Ell{2}(\prZ_2)$
is 
\[   \begin{pmatrix} z_1 \\ z_2 \end{pmatrix} 
\cdot_{\prZ_2}
\begin{pmatrix} w_1 \\ w_2 \end{pmatrix} 
= \tfrac{1}{2}( z_1\conj{w_1} + z_2\conj{w_2}).
\]
Symmetric operators on $\Ell{2}(\prZ_2)$ are represented by matrices
\[ T = \begin{pmatrix} a & \conj{w}  \\ w & b \end{pmatrix}
\]
with $a, b\in \R$. The property that $T$ is an absolute contraction
is equivalent with the conditions $\abs{a} +\abs{w} \le 1$ and
$\abs{b} + \abs{w} \le 1$. Thus, the absolute contractions on $\prZ_2$
form a closed convex set
\[ C_2 := \Bigl\{ \begin{pmatrix} a & \conj{w}  \\ w & b \end{pmatrix} \,
\big| \, a, b \in \R,\, w\in \C,\, \max\{ \abs{a}, \abs{b}\} \le 1- \abs{w}
\Bigr\},
\]
and it is easy to see that each matrix
\[ E_\lambda:= \begin{pmatrix} 0 & \conj{\lambda}  \\ \lambda &  0\end{pmatrix},
\qquad \lambda\in \torus,
\] 
is an extreme point of $C_2$.
We can now formulate the desired (meta-)theorem. 

\begin{thm}[Symmetric Contraction Semigroups]\label{mr.t:main}
Let $m,\:d\in \N$, $1 \le p < \infty$ and let, for each
$1 \le j \le m$,  $\Eff_j,\: \Gee_j: \C^d \to \C$
be measurable functions. 
For any  generator $-A$ 
of a symmetric contraction semigroup over a 
measure space $\prX$
consider the following statement:\\
``For all measurable functions $\bff \in \calM(\prX;\C^d)$
such that $\Eff_j(\bff)\in \dom(A_p)$ and
$\Gee_j(\bff) \in \Ell{q}(\prX)$ for all $1\le j \le m$:
\[ 
\sum_{j=1}^m \re \int_\prX A \Eff_j(\bff) \cdot \Gee_j(\bff)
\,\,\ge\,\, 0." 
\]
Then this statement holds true provided it holds true whenever
$\prX$ is replaced by $\prZ_2$ and $A$ is replaced
by $\Id - E_\lambda$, $\lambda\in \torus$.
\end{thm}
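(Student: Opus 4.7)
My plan proceeds in three stages.

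\emph{Stage 1 (from the semigroup to a single contraction).} For each $F \in \dom(A_p)$ the generator formula gives $AF = \lim_{t\searrow 0} t^{-1}(F - S_tF)$ in $\Ell{p}$, and combined with $\Gee_j(\bff)\in \Ell{q}$ and H\"older's inequality this yields
\[
 \int_\prX A\Eff_j(\bff)\cdot \Gee_j(\bff) \,=\, \lim_{t \searrow 0}\tfrac{1}{t}\int_\prX (\Id - S_t)\Eff_j(\bff)\cdot \Gee_j(\bff).
\]
So it suffices to prove, for every symmetric absolute contraction $S$ over every measure space $\prX$ and every $\bff \in \calM(\prX;\C^d)$ with $\Eff_j(\bff)\in \Ell{p}$ and $\Gee_j(\bff) \in \Ell{q}$, the ``one-step'' inequality
\[
 \sum_{j=1}^m \Re \int_\prX (\Id - S)\Eff_j(\bff)\cdot \Gee_j(\bff) \,\ge\, 0;
\]
note that the domain condition has now disappeared.

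\emph{Stage 2 (integral representation).} Next I would pass to a Stone model. Replacing $\prX$ by a (locally) compact space $K$ carrying a positive regular Borel measure $\mu$, I can assume that $\bff \in \Ce(K;\C^d)$ and that $S$ restricts to a bounded operator $T\colon \Ce(K)\to \Ell{1}(K,\mu)$. The Grothendieck-type representation theorems of Section \ref{s.ot} then furnish a complex regular Borel measure $\mu_S$ on $K\times K$ with
\[
 \int_K Tf\cdot g\, \ud{\mu} \,=\, \int_{K\times K} f(y)\, g(x)\, \ud{\mu_S(x,y)} \qquad (f,g \in \Ce(K));
\]
symmetry of $S$ translates into flip-invariance of $\mu_S$, and the absolute-contraction condition bounds its marginals and total variation in line with the defining inequalities of the set $C_2$.

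\emph{Stage 3 (fibred reduction to $\prZ_2$).} Writing $\mu_\Id$ for the pushforward of $\mu$ under the diagonal embedding $K\to K\times K$, the one-step inequality reads
\[
 \sum_{j=1}^m \Re \int_{K\times K} \Eff_j(\bff(y))\,\Gee_j(\bff(x))\, \ud{(\mu_\Id - \mu_S)(x,y)} \,\ge\, 0.
\]
At each pair $(x,y)\in K\times K$ the local ``$2{\times}2$ block'' of $\mu_S$ belongs to $C_2$, so a Choquet-type disintegration along such pairs reduces the inequality to the corresponding one at each extreme point of $C_2$. The non-diagonal extreme points of $C_2$ are precisely the $E_\lambda$, $\lambda\in \torus$, while the diagonal extremes $\diag(\pm 1,\pm 1)$ are absorbed into the hypotheses for $E_{\pm 1}$ by specialising to $\bff(x)=\bff(y)$. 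Hence the two-point assumption of the theorem supplies nonnegativity in every fibre, and the result follows upon integrating. The principal obstacle lies in this last step: in the general symmetric (not sub-Markovian) case $\mu_S$ is genuinely complex-valued, so the disintegration cannot be performed by naive positivity. One has to use the polar decomposition $\mu_S = h\cdot \abs{\mu_S}$ and keep track of the unimodular phase $h$, which is exactly what the parameter $\lambda\in \torus$ encodes in $\Id - E_\lambda$. Rigorously realising this fibred reduction via the $\Ce(K)$-representation apparatus of Section \ref{s.ot}, rather than through a bare Krein--Milman argument, is in my view the heart of the proof.
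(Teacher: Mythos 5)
Your overall skeleton is the paper's: reduce to a single symmetric absolute contraction via the difference quotient, pass to a Stone model, represent the bilinear form by a measure $\mu_T$ on $K\times K$, and use the polar decomposition of $\mu_T$ to produce the unimodular parameter $\lambda$ in $E_\lambda$. But there are two genuine gaps. First, in Stage 2 you cannot ``assume that $\bff\in\Ce(K;\C^d)$'': the Stone model only gives $\bff\in\calM(K,\nu;\C^d)$, and since the $\Eff_j,\Gee_j$ are merely measurable, no approximation of $\bff$ by continuous functions controls $\Eff_j(\bff)$ or $\Gee_j(\bff)$. What is actually needed is that the representation formula $\int_K Tf\cdot g\,\ud\mu=\int_{K\times K}f\tensor g\,\ud{\mu_T}$, proved a priori only for continuous $f,g$, extends to all $f\in\Ell{p}$, $g\in\Ell{q}$; this is a separate theorem (Theorem \ref{ot.t:mu_T-Lp}) whose proof requires the identity $\abs{\mu_T}=\mu_{\abs{T}}$ and a H\"older estimate against $\mu_{\abs{T}}$, and it cannot be waved away.

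Second, Stage 3 --- which you yourself call the heart of the proof --- is not carried out, and the route you sketch would not work as stated. The phrase ``at each pair $(x,y)$ the local $2\times2$ block of $\mu_S$ belongs to $C_2$'' has no meaning until one has (a) split off the diagonal part, writing $\Id-T=(\Id-M_{\abs{T}\car})+(M_{\abs{T}\car}-T)$, (b) proved $\abs{\mu_T}=\mu_{\abs{T}}$ so that the polar density $\lambda=\ud\mu_T/\ud\mu_{\abs{T}}$ is unimodular and, by symmetry of $T$, Hermitian ($\lambda(x,y)=\conj{\lambda(y,x)}$), and (c) symmetrized over the flip $(x,y)\mapsto(y,x)$ to assemble the $2\times2$ matrices $\Id-E_{\lambda(x,y)}$ integrated against $\zeta_2\tensor\mu_{\abs{T}}$. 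This yields an \emph{explicit} disintegration formula (Theorem \ref{ot.t:disint}); no Choquet or Krein--Milman argument over the convex set of contractions is needed, nor is one obviously available in this infinite-dimensional setting. The diagonal remainder is then handled not by ``absorbing $\diag(\pm1,\pm1)$'' but by the two facts that $\car-\abs{T}\car\ge0$ (from the contraction property of $\abs{T}$) and that the scalar inequality $\Re\sum_j\Eff_j(x)\Gee_j(x)\ge0$ follows from the hypothesis at $T=\tfrac12(E_1+E_{-1})=0$ (equivalently, from $E_{-1}$ with constant $\bff$, which is essentially your observation). Until (a)--(c) are established, the proof is incomplete precisely where the difficulty lies.
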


If, in addition, the semigroup is sub-Markovian, we have an even
better result. In slightly different form (but with with more or less the same method),
this result has been  obtained by Huang in \cite[Theorem 2.2]{Huang2002}.

\begin{thm}[Sub-Markovian Semigroups]\label{mr.t:main-subM}
Let $m,\:d\in \N$, $1 \le p < \infty$ and let, for each
$1 \le j \le m$,  $\Eff_j,\: \Gee_j: \C^d \to \C$
be measurable functions. 
For any  generator $-A$ 
of a symmetric sub-Markovian semigroup over a 
measure space $\prX$
consider the following statement:\\
``For all measurable functions $\bff \in \calM(\prX;\C^d)$
such that $\Eff_j(\bff)\in \dom(A_p)$ and
$\Gee_j(\bff) \in \Ell{q}(\prX)$ for all $1\le j \le m$:
\[ 
\sum_{j=1}^m \re \int_\prX A \Eff_j(\bff) \cdot \Gee_j(\bff)
\,\,\ge\,\, 0." 
\]
Then this statement holds true provided it holds true whenever
$\prX$ is replaced by $\prZ_2$ and $A$ is replaced
by $\Id - E_1$ and by $\Id$.
\end{thm}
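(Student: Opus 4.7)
The proof plan is to follow the same three-stage blueprint used for Theorem \ref{mr.t:main} while exploiting positivity of the semigroup to cut down the set of extreme generators that must be tested. First I would reduce from the unbounded generator $A$ to a bounded one via Yosida approximation: since $t^{-1}(\Id - S_t) \to A$ strongly in $\Ell{p}$ on $\dom(A_p)$ as $t \searrow 0$, and since $\Gee_j(\bff) \in \Ell{q}$, H\"older's inequality yields convergence of the integrals, so it is enough to show
\[ \sum_{j=1}^m \Re \int_\prX (\Id - T)\Eff_j(\bff) \cdot \Gee_j(\bff) \, \ud\mu \ge 0 \]
for every symmetric sub-Markovian contraction $T$ over $\prX$ and every $\bff$ with $\Eff_j(\bff) \in \Ell{p}$, $\Gee_j(\bff) \in \Ell{q}$.

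Second, using the representation results of Section \ref{s.ot}, I would pass to a $\Ce(K)$-model and represent $T$ by a \emph{positive} regular Borel measure $\mu_T$ on a product $K \times L$ whose marginals are dominated by $\mu$, so that
\[ \int_\prX T\varphi \cdot \psi \, \ud\mu = \int_{K \times L} \varphi(x)\,\psi(y) \, \ud\mu_T(x,y). \]
Positivity of $T$ is precisely what turns $\mu_T$ from a complex into a positive measure, and this is the sole place where sub-Markovianity enters the argument. Applying this identity to $\varphi = \Eff_j(\bff)$ and $\psi = \Gee_j(\bff)$ rewrites the quantity to be bounded from below as an integral over $K \times L$ of an integrand depending only on the two values $\bff(x)$ and $\bff(y)$; the fibrewise evaluation at $(x,y)$ corresponds to testing the form against some positive symmetric contraction on the two-atom space $\prZ_2$.

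Third, I would carry out the extreme-point reduction on $\prZ_2$. The positive symmetric contractions form a three-dimensional convex polytope whose vertices, up to the obvious coordinate swap, are the zero matrix, $\Id$, $E_1$, and the rank-one projection $\diag(1,0)$. The case $T=\Id$ is trivial; the case $T = \diag(1,0)$ reduces after integration to the pointwise inequality $\sum_j \Re \Eff_j(z)\Gee_j(z) \ge 0$, which is exactly the content (by specialisation to constant $\bff$) of the hypothesis at $A = \Id$; the remaining extreme point is $E_1$, which is the hypothesis at $A = \Id - E_1$. A Krein--Milman / barycentric argument, or equivalently a direct Choquet integration of these extreme kernels against $\mu_T$, lifts the fibrewise inequality to the global one. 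By contrast, in the merely symmetric setting of Theorem \ref{mr.t:main} one must allow $\mu_T$ to be complex and the corresponding extreme rays become the entire circle $\{E_\lambda : \lambda \in \torus\}$, which is why that theorem needs to quantify over all $\lambda$.

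The principal obstacle will be the second step: producing the integral representation and justifying the fibrewise decomposition for merely measurable $\Eff_j, \Gee_j$ under only the integrability hypotheses on $\Eff_j(\bff), \Gee_j(\bff)$, without any continuity or boundedness. Once that auxiliary machinery -- whose systematic development is the stated purpose of Section \ref{s.ot} -- is in place, positivity renders the geometric part of the argument essentially trivial in comparison to the general symmetric case.
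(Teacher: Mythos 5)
Your proposal is correct and follows essentially the same route as the paper: reduction to bounded $\Id - T$, passage to a compact (Stone) model where positivity of $T$ makes the representing measure $\mu_T$ positive, and a fibrewise split into a diagonal part (handled by the scalar inequality supplied by the $A=\Id$ hypothesis) plus a positive superposition of $\Id - E_1$ forms on $\prZ_2$, which is exactly Corollary \ref{ot.c:disint-subM}. The only cosmetic differences are that the paper writes this disintegration explicitly rather than invoking a Krein--Milman decomposition over the polytope of positive symmetric contractions on $\prZ_2$, and that it interposes a reduction to finite measure spaces before building the Stone model, a step you elide.
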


The second condition here (that the statement holds
for $\prZ_2$ and $A = \Id$) just means that the scalar inequality
\[ \sum_{j=1}^m \Re F_j(x) G_j(x) \ge 0 
\]
holds for all $x\in \C^d$, cf. Lemma \ref{prf.l:scalarcase} below.

\medskip
\noindent
Finally, we suppose that the measure space $\prX$ is finite
and the semigroup is {\em Markovian}, i.e., $S_t\ge 0$  and $S_t \car = \car$ 
for each $t\ge 0$. Then we have an even simpler criterion.

\begin{thm}[Markovian Semigroups]\label{mr.t:main-M}
Let $m,\:d\in \N$, $1 \le p < \infty$ and let, for each
$1 \le j \le m$,  $\Eff_j,\: \Gee_j: \C^d \to \C$
be measurable functions. 
For any  generator $-A$ 
of a symmetric Markovian semigroup over a  
measure space $\prX$
consider the following statement:\\
``For all measurable functions $\bff \in \calM(\prX;\C^d)$
such that $\Eff_j(\bff)\in \dom(A_p)$ and
$\Gee_j(\bff) \in \Ell{q}(\prX)$ for all $1\le j \le m$:
\[ 
\sum_{j=1}^m \re \int_\prX A \Eff_j(\bff) \cdot \Gee_j(\bff)
\,\,\ge\,\,0." 
\]
Then this statement holds true provided it holds true whenever
$\prX$ is replaced by $\prZ_2$ and $A$ is replaced
by $\Id - E_1$.
\end{thm}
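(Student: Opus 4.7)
The plan is to deduce Theorem \ref{mr.t:main-M} from Theorem \ref{mr.t:main-subM} by a constant-shift argument exploiting what Markovianity adds to sub-Markovianity. Since $\prX$ has finite measure and the semigroup is Markovian, $\car$ lies in every $L^p(\prX)$ and in $\dom(A)$ with $A\car = 0$. I record two consequences to be used throughout: (a) translating $F_j$ or $G_j$ by an additive scalar does not affect the admissibility conditions for $(F_j(\bff), G_j(\bff))$, and $A[F_j(\bff) - c\car] = A F_j(\bff)$; (b) by symmetry of $A$,
\[
  \int_\prX A F_j(\bff) \cdot \car = \int_\prX F_j(\bff) \cdot A \car = 0,
\]
so subtracting a constant from $G_j(\bff)$ does not alter the bilinear integral.

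Now fix an arbitrary $x_0 \in \C^d$ and set $\tilde F_j(x) := F_j(x) - F_j(x_0)$ and $\tilde G_j(x) := G_j(x) - G_j(x_0)$. I apply Theorem \ref{mr.t:main-subM} to $(\tilde F_j, \tilde G_j)$; its two hypotheses both follow from the single Markovian hypothesis on $(F_j, G_j)$. For $\bff : \prZ_2 \to \C^d$ with $\bff(0) = x$, $\bff(1) = y$, a direct two-line computation gives
\[
  \sum_{j=1}^m \Re \int_{\prZ_2} (\Id - E_1)\tilde F_j(\bff) \cdot \tilde G_j(\bff) = \tfrac{1}{2}\sum_{j=1}^m \Re (F_j(x) - F_j(y))(G_j(x) - G_j(y))
\]
since the constant shifts cancel in the differences; this is precisely what the Markovian hypothesis on $(F_j, G_j)$ asserts to be nonnegative. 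By the remark immediately after Theorem \ref{mr.t:main-subM}, the $A = \Id$ hypothesis amounts to the scalar inequality $\sum_j \Re \tilde F_j(x)\tilde G_j(x) \ge 0$ for all $x \in \C^d$, i.e.\ $\sum_j \Re (F_j(x) - F_j(x_0))(G_j(x) - G_j(x_0)) \ge 0$, which is the Markovian hypothesis specialised to $y = x_0$.

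Theorem \ref{mr.t:main-subM} therefore yields $\sum_j \Re \int_\prX A \tilde F_j(\bff) \cdot \tilde G_j(\bff) \ge 0$ for all $\bff$ such that $\tilde F_j(\bff) \in \dom(A_p)$ and $\tilde G_j(\bff) \in L^q(\prX)$. By observation (a) these admissibility conditions are equivalent to those on $(F_j(\bff), G_j(\bff))$, and $A\tilde F_j(\bff) = AF_j(\bff)$; by (b) the additive constant in $\tilde G_j(\bff)$ contributes nothing to the integral. Hence the inequality for $(F_j, G_j)$ drops out. I do not foresee any real obstacle; the only bookkeeping point is the integrability and domain membership of the shifted functions, and both are immediate from $\mu(\prX) < \infty$ together with $A\car = 0$.
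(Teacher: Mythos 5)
Your argument is correct, but it takes a genuinely different route from the paper. The paper proves Theorem \ref{mr.t:main-M} by running the disintegration machinery once more: after the reductions of Section \ref{s.red} one has $A=\Id-T$ with $T$ a symmetric Markovian contraction on a compact measure space, Corollary \ref{ot.c:disint-subM} is applied, and the first summand $\int_K(\car-T\car)\Eff_j(\bff)\Gee_j(\bff)\,\ud{\mu}$ vanishes identically because $T\car=\car$; what survives is exactly the $\Id-E_1$ hypothesis integrated against $\mu_T$. You instead treat Theorem \ref{mr.t:main-subM} as a black box and reduce to it by the constant shift $\tilde\Eff_j=\Eff_j-\Eff_j(x_0)$, $\tilde\Gee_j=\Gee_j-\Gee_j(x_0)$. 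Your two verifications are right: the $\Id-E_1$ hypothesis is shift-invariant because only differences $\Eff_j(x)-\Eff_j(y)$ and $\Gee_j(x)-\Gee_j(y)$ enter, and the $A=\Id$ hypothesis for the shifted pair is the Markovian hypothesis at $y=x_0$ — which is consistent with the paper's remark that the scalar inequality \eqref{prf.eq:scalarcase} need not hold for the unshifted pair. The bookkeeping also checks out: finiteness of $\mu$ gives $\car\in\Ell{q}(\prX)$ and $\car\in\dom(A_p)$ with $A\car=0$, so the admissibility conditions are unaffected and $A\tilde\Eff_j(\bff)=A\Eff_j(\bff)$; and $\int_\prX A\Eff_j(\bff)\cdot\car=0$, which you attribute to symmetry of $A$ — the cleanest justification is that symmetry plus $S_t\car=\car$ gives $\int_\prX S_tf=\int_\prX f$ for all $f\in\Ell{1}$, hence $\int_\prX A_pf=0$ for $f\in\dom(A_p)$, avoiding any discussion of the generator's adjoint. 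What each approach buys: yours is a purely formal deduction of one meta-theorem from another, shorter and independent of the operator-theoretic Section \ref{s.ot} once Theorem \ref{mr.t:main-subM} is in hand; the paper's proof re-enters the disintegration but makes structurally visible \emph{why} the $A=\Id$ condition becomes superfluous, namely that the diagonal term in the kernel decomposition disappears when $T\car=\car$.
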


\noindent
The proofs of Theorems \ref{mr.t:main}--\ref{mr.t:main-M} 
are
completed in Section  \ref{s.prf} below after we have performed
some preparatory
reductions (Section \ref{s.red}) and provided some results from abstract
operator theory (Section \ref{s.ot}).

\section{Reduction Steps}\label{s.red}

In this section we shall formulate and prove three results that, 
when combined, reduce the proof of Theorem \ref{mr.t:main} 
to the case when $\prX= (K,\mu)$ is a compact measure space,
$\mu$ has full support,  $\Ell{\infty}(\prX) = \Ce(K)$, and 
$A= \Id - T$, where $T$ is a single symmetric absolute contraction on $\prX$.
These steps are, of course, well-known, but for the convenience 
of the reader we discuss them in some detail.

\subsection{Reduction to Bounded Operators}

Suppose that $(S_t)_{t\ge 0}$ is an absolute contraction semigroup
on $\prX$ with generator $-A$. Then each operator 
$- (\Id- S_\varepsilon)$ is itself the (bounded) generator of 
a (uniformly continuous) 
absolute contraction semigroup $\big(\ue^{-t(\Id - S_\veps)}\big)_{t\ge 0}$
on $\prX$. By definition of $A$, 
\[ \frac{1}{\veps} ( \Id - S_\veps) g \to Ag \quad \text{as}\quad
\veps \searrow 0
\]
in $\Ell{p}$ for $g\in \dom(A_p)$. We thus have the following
first reduction result.

\begin{prop}\label{red.p:bdd}
Let $m,\:d\in \N$, $1 \le p < \infty$ and let, for each
$1 \le j \le m$,  $\Eff_j,\: \Gee_j: \C^d \to \C$
be measurable functions. 
For any  generator $-A$ 
of an absolute contraction semigroup $(S_t)_{t\ge 0}$ over a 
measure space $\prX$
consider the following statement:\\
``For all measurable functions $\bff\in \calM(\prX; \C^d)$
such that $\Eff_j(\bff)\in \dom(A_p)$ and
$\Gee_j(\bff) \in \Ell{q}(\prX)$ for all $1\le j \le m$:
\[ 
\sum_{j=1}^m \re \int_\prX A \Eff_j(\bff) \cdot \Gee_j(\bff)
\,\,\ge\,\, 0." 
\]
Then this statement holds true provided it holds true whenever
$A$ is replaced by $\Id - S_\veps$, $\veps > 0$.
\end{prop}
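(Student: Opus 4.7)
\begin{altproof}[Proof sketch:]
The plan is a straightforward approximation: replace the unbounded generator $-A$ by the bounded ``Yosida-like'' approximants $-\frac{1}{\veps}(\Id - S_\veps)$, apply the hypothesis to each $\Id - S_\veps$ (after rescaling by $\veps > 0$, which is harmless because the inequality is homogeneous), and pass to the limit $\veps \searrow 0$.

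First I would verify that, for each fixed $\veps > 0$, the operator $\Id - S_\veps$ is itself the negative generator of an absolute contraction semigroup, so that the hypothesis of the proposition is applicable to it. Since $S_\veps$ is an absolute contraction, so is every power $S_\veps^n$, and hence the uniformly continuous semigroup
\[
\ue^{-t(\Id - S_\veps)} \,=\, \ue^{-t} \sum_{n=0}^\infty \frac{t^n}{n!}\, S_\veps^n \qquad (t \ge 0)
\]
is an absolute contraction semigroup on $\prX$ whose negative generator is the bounded operator $\Id - S_\veps$.

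Next, let $\bff \in \calM(\prX; \C^d)$ be any function satisfying the hypotheses, namely $\Eff_j(\bff) \in \dom(A_p)$ and $\Gee_j(\bff) \in \Ell{q}(\prX)$ for all $j$. Since $\dom(A_p) \subseteq \Ell{p}$ and $\Id - S_\veps$ is everywhere defined on $\Ell{p}$, the data $\bff$ is admissible for the statement applied to $\Id - S_\veps$, giving
\[
\sum_{j=1}^m \re \int_\prX (\Id - S_\veps)\Eff_j(\bff) \cdot \Gee_j(\bff) \,\,\ge\,\, 0.
\]
Divide by $\veps > 0$ (preserving the inequality) and send $\veps \searrow 0$. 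By definition of the generator,
\[
\tfrac{1}{\veps}(\Id - S_\veps)\Eff_j(\bff) \,\longrightarrow\, A\,\Eff_j(\bff) \quad \text{in } \Ell{p}(\prX),
\]
and since $\Gee_j(\bff) \in \Ell{q}$, H\"older's inequality forces
\[
\int_\prX \tfrac{1}{\veps}(\Id - S_\veps)\Eff_j(\bff) \cdot \Gee_j(\bff) \,\longrightarrow\, \int_\prX A\,\Eff_j(\bff) \cdot \Gee_j(\bff).
\]
Summing over $j$, taking real parts, and passing to the limit yields the desired inequality.

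There is no genuine obstacle; the only point requiring some care is to confirm that $\Id - S_\veps$ genuinely fits the hypothesis as the negative generator of an absolute contraction semigroup (and, if one is tracking additional structure such as symmetry or sub-Markovianity for the parallel statements, that these properties persist under the exponential construction above---which they do, since each $S_\veps^n$ inherits them).
\end{altproof}
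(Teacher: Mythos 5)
Your proof is correct and follows essentially the same route as the paper: observe that $\Id - S_\veps$ is the bounded negative generator of the absolute contraction semigroup $\bigl(\ue^{-t(\Id-S_\veps)}\bigr)_{t\ge 0}$, apply the hypothesis, divide by $\veps$, and pass to the limit using $\tfrac{1}{\veps}(\Id - S_\veps)\Eff_j(\bff) \to A\Eff_j(\bff)$ in $\Ell{p}$ together with H\"older. The paper presents exactly these two observations (boundedness of the approximants and the $\Ell{p}$-convergence on $\dom(A_p)$) as its entire argument.
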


\noindent
Note that in the case $A =\Id - T$, the condition 
$\Eff_j(\bff)\in \dom(A_p)$ just asserts that 
$\Eff_j(\bff)\in \Ell{p}$.



\subsection{Reduction to a Finite Measure Space}

Now it is shown that one may confine
to finite measure spaces. 
For a given 
measure space
$\prX = (X, \Sigma, \mu)$, the set 
\[ \Sigma_{\fin} := \{ B \in\Sigma \st \mu(B) < \infty \}
\]
is directed with respect to set inclusion. 
For asymptotic statements with respect to this 
directed set we use the abbreviation  ``$B \to X$''.  
The multiplication operators 
\[ M_B: \calM(\prX;\C^d) \to \calM(\prX;\C^d), \qquad M_B \bff := 
\car_B \cdot \bff
\]
form a net, with $M_B \to \Id$ strongly on $\Ell{p}$ as $B \to X$ and 
$1\le p < \infty$. 
It follows that for a given absolute contraction $T$ on $\prX$
and functions $f\in \Ell{p}(\prX)$ and $g\in \Ell{q}(\prX)$
\[ \int_\prX (\Id - T) M_B f \cdot (M_B g) 
\to  \int_\prX (\Id - T)f \cdot g 
\quad \text{as $B \to X$}.
\]
For given $B \in \Sigma_\fin$ we form the
finite measure space $(B, \Sigma_B, \mu_B)$, where
$\Sigma_B := \{ C \in \Sigma \st C \subseteq B\}$ and 
$\mu_B := \mu|_{\Sigma_B}$ . Then we have the 
extension operator
\[ \Ext_B: \calM(B; \C^d) \to \calM(\prX;\C^d),\qquad 
\Ext_B\bff = \begin{cases} \bff & \text{on $B$}\\ 0 & \text{on $X\ohne B$},
\end{cases}
\]
and the restriction operator 
\[ \Res_B: \calM(\prX; \C^d) \to \calM(B; \C^d),\qquad \Res_Bf := f|_B.
\]
Note that $\Ext_B\, \Res_B = M_B$ and $\Res_B \Ext_B = \Id$ and 
\[ \int_B \Res_B f \, \ud{\mu_B} = \int_X M_B f \, \ud{\mu}
\qquad (f\in \Ell{1}(\prX)).
\]
A short computation yields that $\Res_B^* = \Ext_B$ between the
respective $\Ell{2}$-spaces. Hence, if $T$ is a
(symmetric) absolute contraction on $\prX= (X, \Sigma,\mu)$, then 
the operator
\[ 
T_B:=  \Res_B\:T \:\Ext_B
\]
is a (symmetric)  absolute contraction on $(B, \Sigma_B, \mu_B)$.
Another short computation reveals that 
\[ \int_X  (\Id - T)M_Bf \cdot (M_B g)\, \ud{\mu}
= \int_B (\Id_{\Ell{p}(B)} - T_B) (\Res_Bf) \cdot (\Res_B g) \, \ud{\mu_B}
\]
whenever $f\in \Ell{p}(\prX)$ and $g\in \Ell{q}(\prX)$.
Finally, suppose that $\Eff: \C^d \to \C$ is measurable
and suppose that $\bff\in \calM(\prX;\C^d)$ is 
such that  $\Eff(\bff) \in \Ell{p}(\prX)$. Then 
\[ \Res_B[ \Eff(\bff)] = \Eff( \Res_B \bff)
\in \Ell{p}(B).
\]
Combining all these facts yields our second reduction result.

\begin{prop}\label{red.p:fms}
Let $m,\:d\in \N$, $1 \le p < \infty$ and let, for each
$1 \le j \le m$,  $\Eff_j,\: \Gee_j: \C^d \to \C$
be measurable functions.
For any   absolute contraction  $T$ 
over a measure space $\prX= (X, \Sigma, \mu)$
consider the following statement:\\
``For all measurable functions $\bff \in \calM(\prX; \C^d)$
such that $\Eff_j(\bff)\in \Ell{p}(\prX)$ and
$\Gee_j(\bff) \in \Ell{q}(\prX)$ for all $1\le j \le m$:
\[ 
\sum_{j=1}^m \re \int_\prX (\Id - T) \Eff_j(\bff)  \cdot \Gee_j(\bff)
\,\,\ge\,\, 0." 
\]
Then this statement holds true provided it holds true whenever
$\prX= (X, \Sigma, \mu)$ is replaced by 
$(B, \Sigma_B, \mu_B)$ and 
$T$ is replaced by $T_B$, where $B \in \Sigma_\fin$.
\end{prop}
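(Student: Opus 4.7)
The plan is to combine all the observations set out in the two paragraphs immediately preceding the proposition, which have already done almost all of the work; the proof essentially amounts to a bookkeeping argument.

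First, fix $\bff\in \calM(\prX;\C^d)$ with $\Eff_j(\bff)\in \Ell{p}(\prX)$ and $\Gee_j(\bff)\in\Ell{q}(\prX)$ for each $j$, and fix $B\in \Sigma_\fin$. The restricted function $\Res_B\bff \in \calM(B;\C^d)$ satisfies $\Eff_j(\Res_B\bff) = \Res_B \Eff_j(\bff) \in \Ell{p}(B)$ and likewise $\Gee_j(\Res_B\bff)\in \Ell{q}(B)$, because restriction preserves membership in $\Ell{r}$ for any $r$ and commutes with the pointwise action of $\Eff_j$, $\Gee_j$. Hence the hypothesis of the proposition, applied to the finite measure space $(B,\Sigma_B,\mu_B)$ and the absolute contraction $T_B$ with test function $\Res_B\bff$, yields
\[ \sum_{j=1}^m \re \int_B (\Id - T_B) \Eff_j(\Res_B\bff) \cdot \Gee_j(\Res_B\bff)\, \ud{\mu_B} \,\,\ge\,\, 0. \]

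Next, invoke the identity already derived, namely
\[ \int_B (\Id - T_B)(\Res_B f) \cdot (\Res_B g)\, \ud{\mu_B} = \int_X (\Id - T) M_B f \cdot M_B g \, \ud{\mu}, \]
applied coordinate-wise to $f = \Eff_j(\bff)$ and $g = \Gee_j(\bff)$. This transforms the previous inequality into
\[ \sum_{j=1}^m \re \int_\prX (\Id - T) M_B \Eff_j(\bff) \cdot M_B \Gee_j(\bff)\, \ud{\mu} \,\,\ge\,\, 0 \]
for every $B \in \Sigma_\fin$.

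Finally, pass to the limit $B\to X$. Since $M_B\to \Id$ strongly on $\Ell{p}$ and on $\Ell{q}$, we have $M_B\Eff_j(\bff)\to \Eff_j(\bff)$ in $\Ell{p}$ and $M_B\Gee_j(\bff)\to \Gee_j(\bff)$ in $\Ell{q}$; boundedness of $\Id - T$ on $\Ell{p}$ and H\"older's inequality then give convergence of each integral on the left-hand side to the desired quantity $\re \int_\prX (\Id - T)\Eff_j(\bff)\cdot \Gee_j(\bff)$, which settles the proposition. There is no real obstacle here: the only step requiring any care is confirming that $M_B\to\Id$ holds strongly on both $\Ell{p}$ and $\Ell{q}$ for the relevant functions, but this is immediate from dominated convergence since $|M_B h|\le |h|$ pointwise and $M_B h\to h$ almost everywhere along $B\to X$, for $h\in \Ell{r}(\prX)$ with any $1\le r <\infty$.
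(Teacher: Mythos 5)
Your proof follows exactly the route the paper takes: apply the hypothesis on $(B,\Sigma_B,\mu_B)$ to $\Res_B\bff$, translate back via the identity $\int_B(\Id - T_B)(\Res_B f)\cdot(\Res_B g)\,\ud{\mu_B}=\int_X(\Id-T)M_Bf\cdot(M_Bg)\,\ud{\mu}$, and pass to the limit along the directed set $\Sigma_\fin$. The one place where your write-up has a genuine gap is the limit step in the case $p=1$: then $q=\infty$, and $M_B\to\Id$ is \emph{not} strongly convergent on $\Ell{\infty}(\prX)$ when $\mu(X)=\infty$ (take $g=\car$), so the justification you give, which explicitly covers only exponents $r<\infty$, does not apply to the factor $\Gee_j(\bff)$. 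The convergence of the integrals is still true, but needs a slightly different argument: writing
\[
\int_\prX (\Id-T)M_Bf\cdot M_Bg\;-\;\int_\prX(\Id-T)f\cdot g
=\int_\prX\bigl[(\Id-T)(M_Bf-f)\bigr]\cdot M_Bg\;-\;\int_{X\setminus B}(\Id-T)f\cdot g\,\ud{\mu},
\]
the first term tends to $0$ because $M_Bf\to f$ in $\Ell{1}$ and $\norm[\infty]{M_Bg}\le\norm[\infty]{g}$, while the second tends to $0$ because $(\Id-T)f\cdot g\in\Ell{1}(\prX)$, so for every $\veps>0$ there is $B_0\in\Sigma_\fin$ with $\int_{X\setminus B_0}\abs{(\Id-T)f\cdot g}\,\ud{\mu}<\veps$, and this bound persists for all $B\supseteq B_0$. (A similar monotonicity argument is also what makes the net convergence $M_Bh\to h$ in $\Ell{r}$, $r<\infty$, rigorous; ``dominated convergence along a net'' is not a theorem by itself, but here $\norm[r]{\car_{X\setminus B}h}$ decreases in $B$ and vanishes along a cofinal sequence since $\supp h$ is $\sigma$-finite.) With this repair your proof is complete and coincides with the one the paper intends.
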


Finally, we observe that if $T$ is sub-Markovian (=positive) or Markovian, then so is each of the operators
$T_B = \Res_B\: T\: \Ext_B$, $B \in \Sigma_\fin$.

\subsection{Reduction to a Compact Measure Space}\label{s.stone}

In the next step we pass from general finite measure spaces
to {\em compact} spaces with a finite positive Borel measure on it. 

Let $\prX= (X, \Sigma, \mu)$ be a {\em finite} measure space.
The space $\Ell{\infty}(\prX)$ is a
commutative, unital  $C^*$-algebra, whence by the Gelfand--Naimark
theorem there is a compact space $K$, the Gelfand space, 
and an isomorphism of unital  $C^*$-algebras 
\[ \Phi:  \Ell{\infty}(\prX) \to \Ce(K).
\]
In particular, $\Phi$ is an isometry. Since the order structure
is determined by the $C^*$-algebra structure (an element
$f$ is $\ge 0$ if and only if there is $g$ such that $f = g \conj{g}$),
$\Phi$ is also an isomorphism of complex Banach lattices.
The following auxiliary result
is, essentially,  a consequence of the Stone-Weierstrass theorem.

\begin{lem}\label{red.l:fc-cont}
In the situation from above, let $M \subseteq \C^d$ be compact and let
$f_1, \dots, f_d \in \Ell{\infty}(\prX)$ be
such that $(f_1, \dots, f_d) \in M$
$\mu$-almost everywhere. Then $(\Phi f_1, \dots, \Phi f_d) \in M$
everywhere on $K$ and 
\begin{equation}\label{red.eq:fc-cont}
 \Phi\bigl(\Eff(f_1, \dots, f_d)\bigr) = \Eff( \Phi f_1, \dots, \Phi f_d)
\end{equation}
for all continuous functions $\Eff \in \Ce(M)$.
\end{lem}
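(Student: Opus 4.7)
The plan is to reduce to the polynomial case, where the statement follows because $\Phi$ is a $*$-algebra homomorphism, and then to extend to all of $\Ce(M)$ via the Stone--Weierstrass theorem.

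First I would establish the identity \eqref{red.eq:fc-cont} for every polynomial $P \in \C[z_1,\conj{z}_1,\dots,z_d,\conj{z}_d]$, i.e.\ every function $P$ expressible as a polynomial in the coordinates and their complex conjugates. Since $\Phi$ is a unital $*$-algebra homomorphism, $\Phi$ commutes with sums, products, scalar multiplication, and conjugation. Applying these operations in the pattern prescribed by $P$ yields
\[
\Phi\bigl(P(f_1,\dots,f_d)\bigr) \,=\, P(\Phi f_1,\dots,\Phi f_d)
\]
pointwise on $K$ for every such $P$.

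Next I would prove the inclusion $(\Phi f_1,\dots,\Phi f_d)(K)\subseteq M$. Assume, towards a contradiction, that there is $x_0\in K$ with $y_0:=(\Phi f_1(x_0),\dots,\Phi f_d(x_0))\notin M$. Since $M$ is compact and $y_0\notin M$, Urysohn's lemma and real Stone--Weierstrass applied to a closed ball $B\subseteq\C^d$ containing $M\cup\{y_0\}$ provide a polynomial $P$ in the variables $z_j,\conj{z}_j$ with $\abs{P}\le \tfrac12$ on $M$ and $\abs{P(y_0)}>1$. Because $(f_1,\dots,f_d)\in M$ a.e., we get $\norm[\Ell{\infty}(\prX)]{P(f_1,\dots,f_d)}\le\tfrac12$; whereas the polynomial identity from the previous paragraph combined with the isometry of $\Phi$ forces
\[
\tfrac12 \,\ge\, \norm[\Ell{\infty}(\prX)]{P(f_1,\dots,f_d)}\,=\,\norm[\Ce(K)]{P(\Phi f_1,\dots,\Phi f_d)} \,\ge\, \abs{P(y_0)}\,>\,1,
\]
a contradiction.

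Finally, given an arbitrary $\Eff\in\Ce(M)$, extend $\Eff$ by Tietze's theorem to a continuous function $\widetilde{\Eff}$ on $\C^d$ of compact support, and use Stone--Weierstrass on a closed ball $B\subseteq\C^d$ containing $M$ to obtain polynomials $P_n$ in $z_j,\conj{z}_j$ with $P_n\to\widetilde{\Eff}$ uniformly on $B$. Uniform convergence on $M$ gives $P_n(f_1,\dots,f_d)\to \Eff(f_1,\dots,f_d)$ in $\Ell{\infty}(\prX)$, and by the range inclusion just established, uniform convergence on the joint range of the Gelfand transforms yields $P_n(\Phi f_1,\dots,\Phi f_d)\to \Eff(\Phi f_1,\dots,\Phi f_d)$ in $\Ce(K)$. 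The continuity of $\Phi$ then lets us pass the polynomial identity to the limit, establishing \eqref{red.eq:fc-cont} in full generality. The main obstacle is the middle step: without the inclusion $(\Phi f_1,\dots,\Phi f_d)(K)\subseteq M$ the right-hand side of \eqref{red.eq:fc-cont} is not even defined for a general $\Eff\in\Ce(M)$, and the polynomial separation trick together with the isometry of $\Phi$ is what forces this containment out of the a.e.\ hypothesis on the left.
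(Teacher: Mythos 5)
Your proposal is correct. It shares the paper's overall architecture --- first secure the containment of the joint range of $(\Phi f_1,\dots,\Phi f_d)$ in $M$ so that the right-hand side of \eqref{red.eq:fc-cont} makes sense, then get the identity itself from Stone--Weierstrass on top of the fact that $\Phi$ is a unital $*$-algebra homomorphism --- but the mechanism you use for the range containment is genuinely different. The paper first treats the special case $M=\Ball[0,r]$, where containment is immediate from the lattice property ($\abs{f_j}\le r\car$ a.e.\ forces $\abs{\Phi f_j}\le r\Phi\car=r\car$ pointwise), proves \eqref{red.eq:fc-cont} there by observing that the set of admissible $\Eff$ is a closed conjugation-invariant subalgebra of $\Ce(M)$ separating points and containing the constants, and then handles general $M$ by evaluating that already-established identity at a compactly supported continuous function vanishing on $M$ and equal to $1$ at a point $y\notin M$: the left side is $\Phi(0)=0$, so $y$ cannot lie in the range. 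You instead prove the polynomial identity first and derive the containment from a polynomial separation argument combined with the isometry of $\Phi$ for the sup norms. Both are valid; the paper's bootstrap avoids invoking the isometry at this point and is slightly shorter, while your argument makes the quantitative content (the sup norm cannot grow under $\Phi$) explicit and replaces the abstract subalgebra form of Stone--Weierstrass by concrete polynomial approximation plus a Tietze extension, which some readers may find more transparent.
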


\begin{proof}
Supose first that $M = \Ball[0,r] := \{ x\in \C^d \st \norm{x}_\infty \le r\}$
for some $r > 0$. Then the condition ``$(f_1, \dots, f_d)\in M$ 
almost everywhere''  translates into the inequalities $\abs{f_j} \le r \car$
(almost everywhere) for all $j = 1,\dots, d$, and hence one has also 
$\abs{\Phi f_j} \le r \Phi\car = r \car$ (pointwise everywhere) for all $j = 1,\dots, d$.
It follows that $\Eff(\Phi f_1, \dots, \Phi f_d)$ is well-defined. 

\noindent 
Now, the set of functions $\Eff\in \Ce(M)$ such that \eqref{red.eq:fc-cont}
 holds is a closed conjugation-invariant 
subalgebra of $\Ce(M)$ that separate the points
and contains the constants. Hence, by the
Stone-Weierstrass theorem, it is all of $\Ce(M)$. 

\noindent
For general $M$ one can proceed in the same way provided one
can assure that $(\Phi f_1, \dots, \Phi f_d)\in M$ everywhere on $K$. Let 
$y \in \C^d \ohne M$ and let $\Eff$ be any continuous function
with compact support on $\C^d$ such that $\Eff= 0$ on $M$ and 
$\Eff(y) = 1$. Let $r > 0$ by so large  that $M \subseteq \Ball[0,r]$
and consider $\Eff$ as a function on $\Ball[0,r]$. Then 
$0 = \Phi(0) = \Phi( \Eff(f_1, \dots, f_d)) = 
\Eff( \Phi f_1, \dots, \Phi f_d)$, whence $y$ cannot be in the image of 
$( \Phi f_1, \dots, \Phi f_d)$. 
\end{proof}

\noindent
By the Riesz-Markov representation theorem, there is a unique
regular Borel measure $\nu$ on $K$  such that 
\[ \int_\prX f  = \int_K \Phi f\, \ud{\nu} 
\]
for all $f\in \Ell{\infty}(\prX)$. It follows 
from Lemma \ref{red.l:fc-cont} that $\abs{\Phi f}^p = \Phi( \abs{f}^p)$
for every $1\le p < \infty$ and every 
$f\in \Ell{\infty}(\prX)$. Therefore, $\Phi$
is an isometry with respect to each $p$-norm. It follows
that $\Phi$ extends to an isometric (lattice) isomorphism
\[ \Phi: \Ell{1}(\prX) \to \Ell{1}(K,\nu).
\]
It is shown in Appendix \ref{a.embed} that $\Phi$, furthermore,
extends canonically (and uniquely) 
to a unital $*$-algebra and lattice isomorphism 
\[ \Phi: \calM(\prX) \to \calM(K,\nu).
\]
The compact measure space $(K, \nu)$  (together with the mapping $\Phi$) 
is called the {\emdf Stone model} of the probability space $\prX$. 
Note that  under the lattice isomorphism $\Phi$ 
the respective $\Ell{\infty}$-spaces
must correspond to each other, whence it follows that $\Ell{\infty}(K,\mu) = 
\Ce(K)$ in the obvious sense.

\medskip
\noindent
We use the canonical extension to vector-valued functions
$\Phi: \calM(\prX;\C^d) \to \calM(K,\nu; \C^d)$ of the Stone model.
By Theorem \ref{embed.t:fc-gen}, 
\[ \Phi \bigl( \Eff(\bff)\bigr) = 
\Eff( \Phi \bff)  \qquad \text{$\nu$-almost everywhere}
\]
for all measurable functions $\bff = (f_1, \dots, f_d) \in \calM(\prX; \C^d)$ 
and  all measurable functions $F: \C^d \to \C$. Hence, we 
arrive at the next reduction result.

\begin{prop}\label{red.p:compact}
Let $m,\:d\in \N$, $1 \le p < \infty$ and let, for each
$1 \le j \le m$,  $\Eff_j,\: \Gee_j: \C^d \to \C$
be measurable functions.
For any  absolute contraction  $T$ 
over a probability space $\prX$
consider the following statement:\\
``For all measurable functions $\bff \in \calM(\prX;\C^d)$
such that $\Eff_j(\bff)\in \Ell{p}(\prX)$ and
$\Gee_j(\bff) \in \Ell{q}(\prX)$ for all $1\le j \le m$:
\[ 
\sum_{j=1}^m \re \int_\prX [(\Id - T) \Eff_j(\bff)] \cdot \Gee_j(\bff)
\,\,\ge\,\, 0." 
\]
Then this statement holds true provided it holds true if
$\prX$ is replaced by $(K, \nu)$
and $T$ is replaced by $\Phi T\Phi^{-1}$, where $(K,\nu)$ and 
\[ \Phi: \calM(\prX) \to \calM(K,\nu)
\]
is the Stone model of $\prX$.
\end{prop}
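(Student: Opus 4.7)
The plan is to transport the inequality directly via the Stone model isomorphism, using only the three features of $\Phi$ that have just been established: (i) $\Phi$ extends to a unital $*$-algebra and lattice isomorphism $\calM(\prX) \to \calM(K,\nu)$ that is isometric on every $\Ell{p}$; (ii) $\Phi(F(\bff)) = F(\Phi\bff)$ $\nu$-almost everywhere for measurable $F : \C^d \to \C$ and $\bff \in \calM(\prX;\C^d)$; (iii) $\int_\prX f = \int_K \Phi f \,\ud\nu$ for $f \in \Ell{1}(\prX)$.

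First, set $T' := \Phi T \Phi^{-1}$ and observe that $T'$ is an absolute contraction on $(K,\nu)$: since $\Phi$ is an isometric lattice isomorphism on $\Ell{1} \cap \Ell{\infty}$ (and on each $\Ell{p}$), $T'$ inherits the $\Ell{1}$- and $\Ell{\infty}$-contractivity from $T$. Next, suppose $\bff \in \calM(\prX;\C^d)$ satisfies the hypotheses $\Eff_j(\bff) \in \Ell{p}(\prX)$ and $\Gee_j(\bff) \in \Ell{q}(\prX)$. Put $\bfg := \Phi \bff \in \calM(K,\nu;\C^d)$. Property (ii) gives
\[
\Eff_j(\bfg) = \Phi\bigl(\Eff_j(\bff)\bigr) \in \Ell{p}(K,\nu), \qquad
\Gee_j(\bfg) = \Phi\bigl(\Gee_j(\bff)\bigr) \in \Ell{q}(K,\nu),
\]
so the assumed inequality on $(K,\nu)$ applies to $T'$ and $\bfg$.

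It then remains to translate the form integral from $(K,\nu)$ back to $\prX$. For each $j$, using that $\Phi$ intertwines $T$ and $T'$ and is multiplicative, together with (iii),
\[
\int_K (\Id - T')\Eff_j(\bfg) \cdot \Gee_j(\bfg) \,\ud\nu
= \int_K \Phi\bigl((\Id - T)\Eff_j(\bff) \cdot \Gee_j(\bff)\bigr) \,\ud\nu
= \int_\prX (\Id - T)\Eff_j(\bff) \cdot \Gee_j(\bff).
\]
Summing over $j$ and taking the real part, the assumed nonnegativity on $(K,\nu)$ yields the desired nonnegativity on $\prX$.

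The only genuinely non-trivial steps are the ones already delegated to earlier work: that $\Phi$ extends isometrically and multiplicatively beyond $\Ell{\infty}$ all the way to $\calM(\prX)$, and that the functional calculus identity $\Phi(F(\bff)) = F(\Phi\bff)$ persists for arbitrary measurable $F$ (Lemma \ref{red.l:fc-cont} handled only continuous $F$ on compact sets; the general case is Theorem \ref{embed.t:fc-gen} from the appendix). Once these are in hand, the transfer of the inequality is a completely formal calculation, and no further analytic input is needed.
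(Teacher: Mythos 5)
Your argument is correct and is exactly the route the paper takes (the paper in fact leaves the final verification implicit, saying only that "combining these facts" yields the proposition): transport $\bff$, the hypotheses, and the form integral through the Stone model $\Phi$, using the extension of $\Phi$ to $\calM(\prX)$, the measurable functional calculus identity of Theorem~\ref{embed.t:fc-gen}, the multiplicativity of $\Phi$, and the integral-preservation $\int_\prX f = \int_K \Phi f\,\ud\nu$. You have simply written out the formal computation that the paper omits, and all the steps are properly justified by the cited lemmas.
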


As in the reduction step before, we observe that 
the properties of being symmetric, sub-Markovian or Markovian 
are preserved during  the reduction process, i.e.,
in passing from $T$ to $\Phi^{-1}T\Phi$.


\begin{rem}
In the late 1930's and beginning 1940's, 
several representation results for abstract structures 
were developed first by Stone \cite{Stone1937} (for Boolean algebras),
then by Gelfand \cite{Gelfand1939,GelfandNeumark1943} (for normed algebras) 
and Kakutani  \cite{Kakutani1941a,Kakutani1941b} (for $AM$- and $AL$-spaces).
However, it is hard to determine when for 
the first time there was made effective
use of these results in a context similar to ours.
Halmos in his paper  \cite{Halmos1949} 
on a theorem of Dieudonn\'e on measure disintegration employs the
idea but uses Stone's original theorem. A couple of years later,
Segal \cite[Thm.{} 5.4]{Segal1951} revisits Dieudonn\'e's  theorem 
and gives a proof based on algebra representations. (He does not mention
Gelfand--Naimark, but only says ``by well-known results''.)

In our context, the idea --- now through the Gelfand--Naimark theorem ---
was employed by Nagel and Voigt
\cite{NagelVoigt1996} in order to simplify arguments in the proof
of Liskevich and Perelmuter  \cite{LiskevichPerelmuter1995}
on the optimal angle of analyticity in the sub-Markovian case, see Section
\ref{s.soa} below. Through Ouhabaz' book \cite{Ouhabaz2005} it has become
widely known in the field, and also 
Carbonaro and \Dragicevic\ \cite[p.19]{CarbDrag13Pre} use this idea.
\end{rem}

\section{Operator Theory}\label{s.ot}

In order to proceed with the proof of the main theorem 
(Theorem \ref{mr.t:main}) we need to provide some
results from the theory of operators of the form
$T: \Ce(K) \to \Ell{1}(L,\mu)$, where $K$ and $L$ are compact.\footnote{The case
that $K$ and $L$ are locally compact is touched upon in some additional remarks.}  For the
application to symmetric contraction semigroups as considered in the previous
sections, we only need the case that $\Ce(L) = \Ell{\infty}(L,\mu)$,
and this indeed would render simpler some of the proofs below. 
However, a restriction to this case is artificial, and we develop
the operator theory in reasonable generality.

\subsection{The Linear Modulus}

In this section we introduce the linear modulus of
an order-bounded operator $T: \Ce(K) \to \Ell{1}(\prX)$.
This can be treated in the framework of general
Banach lattices, see \cite[Chapter IV,\S1]{Schaefer1974},
but due to our concrete situation, 
things are a little easier than in an  abstract setting.


\medskip
\noindent
Let $\prX = (X, \Sigma, \mu)$ be a  measure space and let
$K$ be compact. A linear  operator $T: \Ce(K) \to \Ell{1}(\prX)$
is called {\emdf order-bounded} if for each $0\le f\in \Ce(K)$ there
is $0 \le h\in  \Ell{1}(\prX)$ such that 
\[ \abs{Tu}\le h \quad \text{for all $u \in \Ce(K)$ with $\abs{u}\le f$.}
\]
And $T$ is called {\emdf regular} if it is a linear combination 
of positive operators. It is clear that each regular operator
is order-bounded. The converse also holds, by the following construction.

Suppose that $T: \Ce(K) \to \Ell{1}(\prX)$ is order-bounded.
Then, for  $0 \le f\in \Ce(K)$ let
\begin{equation}\label{ot.eq.mod-def}
 \linabs{T}f :=\sup \{ \abs{Tg} \st g\in \Ce(K), \abs{g}\le f\}
\end{equation}
as a supremum in the lattice sense. (This supremum exists since
the set on the right hand side is order bounded by hypothesis and
$\Ell{1}$ is order complete, see \cite[Cor.{} 7.8]{EFHNPre}.)

\begin{lem}\label{ot.l:mod-welldef} 
Suppose that $T: \Ce(K) \to \Ell{1}(\prX)$ is order-bounded. Then
the mapping 
$\abs{T}$ defined by \eqref{ot.eq.mod-def} 
extends uniquely to a positive operator
\[ \abs{T}: \Ce(K) \to \Ell{1}(\prX).
\] 
Moreover, the following assertions hold:
\begin{aufzi}
\item $\abs{Tf}\le \abs{T}\abs{f}$ for all $f\in \Ce(K)$. 
\item $\norm{T} \le \norm{\abs{T}}$,
\item $\conj{T}$ is order-bounded and $\abs{\conj{T}} = \abs{T}$.
\item If $S: \Ce(K) \to \Ell{1}(\prX)$ is order-bounded, then 
  $S + T$ is also order-bounded, and $\abs{S+T} \le \abs{S}+ \abs{T}$. 
\end{aufzi}
\end{lem}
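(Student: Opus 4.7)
The plan is to check that \eqref{ot.eq.mod-def} gives an additive and positively homogeneous map on the positive cone of $\Ce(K)$, and then to extend it uniquely by linearity to all of $\Ce(K)$. The supremum in \eqref{ot.eq.mod-def} exists in $\Ell{1}(\prX)$ precisely because the bracketed family is order-bounded by hypothesis on $T$ and $\Ell{1}(\prX)$ is order-complete. Positive homogeneity on the cone is immediate from the substitution $g\mapsto\lambda g$ in the supremum.

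The crux will be additivity $\abs{T}(f_1+f_2)=\abs{T}f_1+\abs{T}f_2$ for $0\le f_1,f_2\in\Ce(K)$, which I would split into two inequalities. For ``$\le$'' I would use a Riesz-type decomposition: given $g\in\Ce(K)$ with $\abs{g}\le f_1+f_2$, set
\[ h_i \,:=\, g\,\frac{f_i}{f_1+f_2} \qquad (i=1,2), \]
with the convention that $h_i$ vanishes on the (common) zero set of $g$ and $f_1+f_2$. The sandwich $\abs{h_i}\le f_i$ forces continuity at the exceptional points, so $h_i\in\Ce(K)$; then $g=h_1+h_2$, and $\abs{Tg}\le\abs{Th_1}+\abs{Th_2}\le\abs{T}f_1+\abs{T}f_2$ directly from the defining supremum. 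For ``$\ge$'' I would use the pointwise identity $\abs{z_1}+\abs{z_2}=\sup_{\alpha,\beta\in\torus}\abs{\alpha z_1+\beta z_2}$, which by order-completeness of $\Ell{1}(\prX)$ lifts to
\[ \abs{Tg_1}+\abs{Tg_2} \,=\, \sup_{\alpha,\beta\in\torus}\abs{T(\alpha g_1+\beta g_2)} \,\le\, \abs{T}(f_1+f_2) \]
whenever $\abs{g_i}\le f_i$, since $\abs{\alpha g_1+\beta g_2}\le f_1+f_2$. Taking suprema over admissible $g$'s in each case yields the two inequalities.

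With additivity and positive homogeneity on the cone in hand, the extension to a positive $\C$-linear operator $\abs{T}:\Ce(K)\to\Ell{1}(\prX)$ follows the standard recipe: first set $\abs{T}f := \abs{T}f^+ - \abs{T}f^-$ on real-valued $f$, which is well-defined because $f=u-v$ with $u,v\ge 0$ forces $u+f^-=v+f^+$ and hence $\abs{T}u-\abs{T}v=\abs{T}f^+-\abs{T}f^-$ by additivity; then extend by $\C$-linearity. Uniqueness is automatic since any positive linear operator is determined by its values on the positive cone. Property (a) is then immediate, since $f$ itself is admissible in the supremum defining $\abs{T}\abs{f}$. Property (b) follows from (a) by integration: $\norm{Tf}_1=\norm{\abs{Tf}}_1\le\norm{\abs{T}\abs{f}}_1\le\norm{\abs{T}}\cdot\norm{f}_\infty$. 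For (c), the bijection $g\mapsto\conj{g}$ of the admissible set, combined with $\abs{\conj{T}g}=\abs{\conj{T\conj{g}}}=\abs{T\conj{g}}$, yields $\abs{\conj{T}}=\abs{T}$ on the cone, hence everywhere by uniqueness. For (d), $\abs{g}\le f$ implies $\abs{(S+T)g}\le\abs{Sg}+\abs{Tg}\le\abs{S}f+\abs{T}f$, which both witnesses the order-boundedness of $S+T$ and, after taking the supremum over $g$, gives the domination $\abs{S+T}\le\abs{S}+\abs{T}$.

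The hard part is the additivity on the positive cone: verifying continuity of the ratio $gf_i/(f_1+f_2)$ across the zero set of $f_1+f_2$, and lifting the pointwise complex-modulus identity to a genuine lattice supremum in the order-complete $\Ell{1}(\prX)$. Once these technicalities are dispatched, the remaining items reduce to routine manipulations with suprema and pointwise inequalities.
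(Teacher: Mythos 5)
Your proof is correct and follows essentially the same route as the paper: the same Riesz-type decomposition $h_i = gf_i/(f_1+f_2)$ (with continuity across the zero set forced by the sandwich $\abs{h_i}\le f_i$) for one inequality, and the same variational identity $\abs{z_1}+\abs{z_2}=\sup_{\alpha,\beta}\abs{\alpha z_1+\beta z_2}$ for the other. The paper leaves the linear extension and items (a)--(d) to the reader with ``the remaining statements are now easy to establish,'' and your treatment of those routine points is accurate.
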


\noindent
The operator $\abs{T}: \Ce(K)\to \Ell{1}(\prX)$ whose existence is asserted in the theorem is called
the {\emdf linear modulus} of $T$. 

\begin{proof}
For the first assertion, it suffices to show that $\abs{T}$ is additive and
positively homogeneous. The latter is straightforward, so consider
additivity. Fix $0 \le f,g\in \Ce(K)$ and  let $u \in \Ce(K)$ with
$\abs{u} \le f + g$. Define 
\[ u_1 = \frac{fu}{f+g} ,\quad
u_2 = \frac{gu}{f+g}, 
\]
where $u_1= u_2 =0$ on the set $\set{f +g = 0}$. Then $u_1,\: u_2\in \Ce(K)$,
$\abs{u_1} \le f$, $\abs{u_1} \le g$ and $u_1 + u_2 = u$.
Hence
\[ \abs{Tu} \le \abs{Tu_1} + \abs{Tu_2}\le \linabs{T}f + \linabs{T}g
\]
and taking the supremum with respect to $u$ we obtain $\linabs{T}(f+g) \le 
\linabs{T}f + \linabs{T}g$.
Conversely, let $u,\:v\in\Ce(K)$ with $\abs{u} \le f$ and $\abs{v} \le g$. 
Then, for any $\alpha \in \C^2$ with  $\abs{\alpha_1} + \abs{\alpha_2} \le 1$
we have  $\abs{\alpha_1 u + \alpha_2v} \le f +g$, and hence
\begin{align*}
\abs{Tu} + \abs{Tv}
= \sup_\alpha \abs{ \alpha_1 Tu + \alpha_2 Tv}
=  \sup_\alpha \abs{ T( \alpha_1 u + \alpha_2 v)} \le \linabs{T}(f+g).
\end{align*}
Taking suprema with respect to $u$ and $v$ we arrive at 
$\linabs{T}f + \linabs{T}g \le \linabs{T}(f+g)$. 
The remaining statements are now easy to establish. 
\end{proof}

\noindent
Suppose that 
$T: \Ce(K) \to \Ell{1}(\prX)$ is order-bounded, so that 
$\abs{T}$ exists. Then, by Lemma \ref{ot.l:mod-welldef}, 
also $\Re T$ and $\Im T$ are
order-bounded. If $T$ is real, i.e., if $T = \conj{T}$, then 
clearly $T \le \abs{T}$, and hence $T = \abs{T} - (\abs{T} - T)$ is regular.
It follows that every order-bounded operator is regular. (See also 
\cite[IV.1, Prop.s 1.2. and 1.6]{Schaefer1974}.) 

\smallskip
\noindent
Let us turn to another characterization of order-boundedness. 
If $T: \Ce(K) \to \Ell{1}(\prX)$ is order-bounded and $\abs{T}$ is its
linear modulus, we denote by $\abs{T}'\!\mu$ the unique element $\nu \in \eM(K)$ 
such that 
\[ \int_K f\, \ud{\nu} = \int_\prX \linabs{T}f\quad \text{for all $f\in \Ce(K)$}.
\]
It is then easy to see that $T$ extends to a contraction
$T: \Ell{1}(K,\nu) \to \Ell{1}(\prX)$. We shall see that the existence
of a positive regular Borel measure $\nu$ on $K$ with this property
characterizes the order-boundedness.
The key is the following general result, which has (probably) 
been established first by Grothendieck \cite[p.67, Corollaire]{GrothendieckPT}. 

\begin{lem}\label{ot.l:mod-L1-ineq}
Let $\prX,\: \prY$ be measure spaces and let 
$T: \Ell{1}(\prY) \to \Ell{1}(\prX)$ be a bounded operator. Then for any finite
sequence $f_1, \dots f_n, \in \Ell{1}(\prY)$
\[   \int_\prX  \sup_{1\le j \le n} \abs{Tf_j}  
\le \norm{T}  \int_\prY \sup_{1\le j \le n} \abs{f_j}.
\]
\end{lem}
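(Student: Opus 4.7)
The plan is to reduce to the positive case via a modulus argument, parallel to the one developed above for $\Ce(K)\to \Ell{1}$ operators but now on $\Ell{1}\to \Ell{1}$.

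The core input is the classical fact---essentially Grothendieck's cited corollary---that every bounded operator $T: \Ell{1}(\prY) \to \Ell{1}(\prX)$ admits a positive \emph{modulus} $\abs{T}: \Ell{1}(\prY)\to \Ell{1}(\prX)$ with
\[ \abs{Tf}\le \abs{T}\abs{f} \quad (f\in \Ell{1}(\prY)) \qquad\text{and}\qquad \norm{\abs{T}} = \norm{T}. \]
I would define $\abs{T}$ on the positive cone via the Riesz--Kantorovich formula $\abs{T}f := \sup\{\abs{Tg}: \abs{g}\le f\}$, the supremum being taken in the Dedekind-complete lattice $\Ell{1}(\prX)$; existence of the supremum follows from $\norm{Tg}_1 \le \norm{T}\norm{f}_1$ for admissible $g$. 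The additivity argument is a carbon copy of the one in the proof of Lemma \ref{ot.l:mod-welldef} (split $u\in \Ell{1}(\prY)$ with $\abs{u}\le f_1+f_2$ as $u = u_1 + u_2$ with $\abs{u_k}\le f_k$). The norm identity $\norm{\abs{T}f}_1 \le \norm{T}\norm{f}_1$ relies on the $AL$-structure of $\Ell{1}$: additivity of the $\Ell{1}$-norm on the positive cone permits the integral to be interchanged with the lattice supremum defining $\abs{T}f$.

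Granted this, the lemma becomes a short calculation. Put $F := \sup_{1\le j\le n}\abs{f_j} \in \Ell{1}(\prY)$. Positivity of $\abs{T}$ together with $\abs{f_j}\le F$ yields $\abs{T}\abs{f_j}\le \abs{T}F$; combined with $\abs{Tf_j}\le \abs{T}\abs{f_j}$ and taking the supremum over $j$,
\[ \sup_{1\le j\le n} \abs{Tf_j} \le \abs{T}F \quad \text{$\mu$-a.e.} \]
Integrating and invoking the norm identity:
\[ \int_\prX \sup_j \abs{Tf_j} \le \int_\prX \abs{T}F \le \norm{\abs{T}}\cdot \norm{F}_1 = \norm{T}\int_\prY \sup_j \abs{f_j}. \]

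The main obstacle is the norm identity $\norm{\abs{T}} = \norm{T}$. A bare-hands approach using measurable phase selectors---writing $\sup_j\abs{Tf_j} = \sum_j \alpha_j Tf_j$ with $\sum_j\abs{\alpha_j}\le 1$ and then dualizing via the Banach-space adjoint $T^*: \Ell{\infty}(\prX)\to \Ell{\infty}(\prY)$---reduces to proving $\norm{\sum_j\abs{T^*\alpha_j}}_\infty \le \norm{T}$, which by $\Ell{1}$-$\Ell{\infty}$ duality is \emph{equivalent} to the very inequality one is trying to prove (for $T^*$ in place of $T$), hence circular. It is precisely the construction of $\abs{T}$, made possible by the Dedekind completeness and the $AL$-structure of $\Ell{1}$-spaces, that breaks the loop.
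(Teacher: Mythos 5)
Your reduction of the lemma to the existence of a modulus $\abs{T}$ with $\norm{\abs{T}}=\norm{T}$ is circular, and the point where you claim to break the loop is exactly where it closes. To define $\abs{T}f=\sup\{\abs{Tg}\st \abs{g}\le f\}$ you need this set to have a supremum in $\Ell{1}(\prX)$; Dedekind completeness only delivers suprema of sets that are \emph{order} bounded above, and the individual norm bounds $\norm{Tg}_1\le\norm{T}\norm{f}_1$ do not provide an order upper bound (in $\Ell{1}(0,1)$ the functions $n\car_{(0,1/n)}$ all have norm $1$ but admit no common integrable majorant). The only available route is to show that the finite suprema $\sup_{1\le j\le n}\abs{Tg_j}$, $\abs{g_j}\le f$, form a norm-bounded upward directed net and then invoke the fact that such nets in $\Ell{1}_+$ converge to their supremum --- but the required bound $\int_\prX\sup_j\abs{Tg_j}\le\norm{T}\int_\prY f$ \emph{is} the lemma. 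The same difficulty recurs for the norm identity $\norm{\abs{T}}\le\norm{T}$: interchanging the integral with the supremum (legitimate, by norm convergence of the increasing net) reduces it to bounding $\int_\prX\sup_j\abs{Tg_j}$, i.e.\ to the lemma again; additivity of the $\Ell{1}$-norm on the positive cone only gives the useless bound $n\norm{T}\norm{f}_1$. The paper itself flags this in the discussion following Theorem \ref{ot.t:ob-char}: the lemma is ``essentially equivalent to saying that every bounded operator between $\Ell{1}$-spaces is order-bounded,'' so any argument that starts from order-boundedness of $T$ (equivalently, from the existence of $\abs{T}$) has assumed the conclusion.

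What is needed --- and what the paper does --- is a direct argument bypassing the modulus entirely: approximate the $f_j$ by step functions subordinate to a single finite partition $(A_k)_k$, use the variational formula $\sup_j\abs{z_j}=\sup\bigl\{\bigl|\sum_j\alpha_jz_j\bigr|\st\norm{\alpha}_1\le1\bigr\}$ to obtain the pointwise bound $\sup_j\abs{Tf_j}\le\sum_k\bigl(\sup_j\abs{c_{jk}}\bigr)\abs{T\car_{A_k}}$, and integrate. If you are content to quote the Chacon--Krengel theorem on the linear modulus of an $\Ell{1}$-operator as a black box, your concluding three-line computation is correct; but the proof of that theorem is this very computation, so as a self-contained proof your proposal has a genuine gap.
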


 \begin{proof}
By approximation, we may suppose that all the functions 
$f_j$ are integrable step functions with respect to one 
finite partition $(A_k)_k$.
We use the variational form
\[    \sup_{1\le j\le n}  \abs{z_j} = \sup \Big\{\abs{ {\sum}_j^n   \alpha_j z_j }
\st  \alpha \in \ell^1_n, \norm{\alpha}_1 \le 1\Big\} 
\]
for complex numbers $z_1, \dots, z_n $. 
Then, with $f_j = \sum_k c_{jk} \car_{A_k}$,
\begin{align*}
\sup_{1\le j\le n}  \abs{Tf_j} & = \sup_\alpha \abs { {\sum}_j^n{\sum}_k \alpha_j c_{jk}T\car_{A_k} }
\\ & \le 
\sup_\alpha   {\sum}_k  \norm{\alpha}_ 1 \big(\sup_{1\le j \le n} \abs{c_{jk}}\big) \abs{T\car_{A_k}}
\\ & =  
{\sum}_k   \big(\sup_{1\le j \le n} \abs{c_{jk}}\big) \abs{T\car_{A_k}}.
\end{align*}
Integrating yields
\begin{align*}
\int_\prX & \sup_{1\le j\le n}  \abs{Tf_j} 
 \le {\sum}_k   \big(\sup_{1\le j \le n} \abs{c_{jk}}\big) \norm{T\car_{A_k}}_1
\\ & \le \norm{T}
{\sum}_k   \big(\sup_{1\le j \le n} \abs{c_{jk}}\big) \norm{\car_{A_k}}_1
\\ & = \norm{T}
\int_\prY {\sum}_k   \big(\sup_{1\le j \le n} \abs{c_{jk}}\big)\car_{A_k}
= \norm{T} \int_\prY \sup_{1\le j\le n}  \abs{f_j}.\qedhere
\end{align*}
\end{proof}

\noindent
We can now formulate the main result of this section.

\begin{thm}\label{ot.t:ob-char}
Let  $\prX= (X, \Sigma, \mu)$ be any measure space and 
$T: \Ce(K) \to \Ell{1}(\prX)$ a linear operator.
Then the following assertions are equivalent:
\begin{aufzii}
\item $T$ is order-bounded.
\item $T$ is regular.
\item There is a positive regular Borel measure $\nu \in \eM(K)$ such that 
$T$ extends to a contraction $\Ell{1}(K,\nu) \to \Ell{1}(\prX)$.
\end{aufzii}
If {\upshape (i)--(iii)} hold, then 
\[  \abs{T}'\!\mu = \min\bigl\{\nu \in \eM_+(K) \st \norm{Tf}_{\Ell{1}(\prX)}
\le \norm{f}_{\Ell{1}(K,\nu)} \,\,\text{for all $f\in \Ce(K)$}\bigr\}.
\]
In particular, if $0 \le \nu \in\eM(K)$ is such that 
 $T$ extends to a contraction $\Ell{1}(K,\nu) \to \Ell{1}(\prX)$, then
so does $\abs{T}$. 
\end{thm}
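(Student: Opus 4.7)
My plan is to proceed implication by implication, using Lemma \ref{ot.l:mod-welldef}, Lemma \ref{ot.l:mod-L1-ineq} and order completeness of $\Ell{1}$ as the only ingredients.

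The equivalence (i)$\Leftrightarrow$(ii) is essentially already in the text: (ii)$\Rightarrow$(i) is immediate, while (i)$\Rightarrow$(ii) was recorded right after Lemma \ref{ot.l:mod-welldef}, via $T = \re T + \ui\, \im T$ together with the decomposition $S = \linabs{S} - (\linabs{S} - S)$ of a real order-bounded operator. For (i)$\Rightarrow$(iii) I would take $\nu := \linabs{T}'\!\mu$: the pointwise bound $\abs{Tf}\le \linabs{T}\abs{f}$ from Lemma \ref{ot.l:mod-welldef}(a), integrated against $\mu$, gives $\norm{Tf}_{\Ell{1}(\prX)}\le \norm{f}_{\Ell{1}(K,\nu)}$ for all $f\in\Ce(K)$, which is exactly the desired contractive extension.

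The main obstacle is (iii)$\Rightarrow$(i), together with the ``in particular'' assertion. Given a contractive extension $\tilde T\colon \Ell{1}(K,\nu)\to \Ell{1}(\prX)$ and $0\le f\in\Ce(K)$, I would consider the upward-directed family
\[
\calF_f := \bigl\{\abs{Tu_1}\vee\cdots\vee\abs{Tu_n}\st n\in\N,\ u_j\in\Ce(K),\ \abs{u_j}\le f\bigr\}\subseteq \Ell{1}(\prX)_+,
\]
which, by Lemma \ref{ot.l:mod-L1-ineq} applied to $\tilde T$, is norm-bounded by $\norm{f}_{\Ell{1}(K,\nu)}$. The delicate point is then to show that a norm-bounded, upward-directed subset of $\Ell{1}(\prX)_+$ has a supremum: one extracts an increasing sequence whose norms exhaust the supremum of the norms, invokes monotone convergence to produce a candidate limit, and checks by a second monotone convergence argument that this limit dominates the whole net. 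By construction this supremum equals $\linabs{T}f$; its mere existence is order-boundedness, while its norm is controlled by $\norm{f}_{\Ell{1}(K,\nu)}$. Positivity of $\linabs{T}$ together with $\abs{\linabs{T}f}\le \linabs{T}\abs{f}$ then propagates this bound to complex $f\in\Ce(K)$, producing the claimed contractive extension of $\linabs{T}$.

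The minimum formula follows at once: $\linabs{T}'\!\mu$ is admissible by (i)$\Rightarrow$(iii), and any other admissible $\nu$ inherits, via the previous paragraph, the contractive extension of $\linabs{T}$, whence
\[
\int_K f\,\ud(\linabs{T}'\!\mu) \,=\, \int_\prX \linabs{T}f \,\le\, \norm{f}_{\Ell{1}(K,\nu)} \,=\, \int_K f\,\ud\nu
\]
for every $0\le f\in\Ce(K)$, forcing $\linabs{T}'\!\mu\le \nu$ as regular Borel measures on $K$.
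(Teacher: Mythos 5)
Your proposal is correct and follows essentially the same route as the paper: (i)$\Leftrightarrow$(ii) from the remarks after Lemma \ref{ot.l:mod-welldef}, (i)$\Rightarrow$(iii) via $\nu=\linabs{T}'\!\mu$ and $\abs{Tf}\le\linabs{T}\abs{f}$, and (iii)$\Rightarrow$(i) plus the minimality of $\linabs{T}'\!\mu$ by applying Lemma \ref{ot.l:mod-L1-ineq} to the contractive extension and taking the supremum of the resulting upward-directed, norm-bounded family in $\Ell{1}_+$. The only cosmetic difference is that you sketch the proof of the fact that such a family has a supremum (and that its norm is controlled), where the paper simply cites \cite[Thm.{} 7.6]{EFHNPre}; your sketch of that fact is sound.
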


\begin{proof}
The implications (i)$\Leftrightarrow$(ii)$\Rightarrow$(iii) have already been
established. Moreover, if (i) holds then
it follows from the inequality $\abs{Tf}\le \linabs{T}\abs{f}$ that 
$\norm{Tf}_1\le \norm{f}_{\Ell{1}(K, \nu)}$ with $\nu = \abs{T}'\! \mu$. 

\noindent
On the other hand, suppose (iii) holds and 
that $0\le \nu \in \eM(K)$ is such that 
$\int_\prX \abs{Tf} \le \int_K \abs{f}\, \ud{\nu}$ for all $f\in \Ce(K)$. 
Let $0 \le f\in \Ce(K)$, $n \in \N$  and $u_j \in \Ce(K)$ with $\abs{u_j} \le f$
($1 \le j \le n$). Then, by Lemma \ref{ot.l:mod-L1-ineq},
\[ \int_\prX \sup_{1\le j \le n} \abs{Tu_j} \le \int_K \sup_{1\le j \le n} \abs{u_j}\, 
\ud{\nu} \le \int_K f\, \ud{\nu}.
\]
Now, any upwards directed and norm bounded net in  $\Ell{1}_+$ is 
order-bounded and converges in $\Ell{1}$-norm towards its supremum,
see \cite[Thm.{} 7.6]{EFHNPre}. It follows that $T$ is order-bounded, and
\[ \int_\prX \linabs{T}f \le \int_K f\, \ud{\nu}.
\]
Consequently, $\abs{T}'\!\mu \le \nu$, as claimed. 
\end{proof}

\begin{rems}\label{ot.r:regops}
\begin{aufziii}
\item Suppose that (i)--(iii) of Theorem \ref{ot.t:ob-char} hold.
Then $\abs{T'\mu} \le \abs{T}'\!\mu$, and  
equality holds if and only if $T$ extends to
a contraction $T: \Ell{1}(K,\abs{T'\mu}) \to \Ell{1}(\prX)$.

\item The modulus mapping $T \mapsto \abs{T}$ turns $\BL^r(\Ce(K), \Ell{1}(\prX))$, the set of regular operators,  into a complex Banach lattice
with the norm $\norm{T}_r := \norm{ \abs{T} }$,
 see \cite[Chap.{} IV, \S1]{Schaefer1974}.

\item  All the results of this section
hold {\em mutatis mutandis} for
linear operators $T: \Cc(Y) \to \Ell{1}(\prX)$, where
$Y$ is a locally compact space and $\Cc(Y)$ is the space
of continuous functions on $Y$ with compact support.
\end{aufziii}
\end{rems}

\noindent
The modulus of a linear operator appears already in the seminal
work of Kantorovich \cite{Kantorovich1940}
on operators on linear ordered spaces. 
For operators on an $\Ell{1}$-space the linear modulus was 
(re-)introduced in \cite{ChaconKrengel1964} by Chacon and Krengel  
who probably were  not aware of Kantorovich's work.  Later on, their
construction was generalized to order-bounded operators between 
general Banach lattices by Luxemburg and Zaanen in  \cite{LuxemburgZaanen1971}
and then incorporated by Schaefer in his monograph \cite{Schaefer1974}.

The equivalence of order-bounded and regular operators is of course
a standard lemma from Banach lattice theory.
Lemma \ref{ot.l:mod-L1-ineq}  is essentially equivalent to saying that 
every bounded operator between $\Ell{1}$-spaces is order-bounded.
This has been realized by Gro\-then\-dieck in
\cite[p.66, Prop.{} 10]{GrothendieckPT}. (Our proof
differs considerably from the original one.) 
The equivalence of (i)--(iii)
in Theorem \ref{ot.t:ob-char} can also be 
derived from combining Theorem IV.1.5 
and Corollary 1 of Theorem II.8.9 of \cite{Schaefer1974}. However,
the remaining part of Theorem \ref{ot.t:ob-char} might be new.

\subsection{Integral Representation of Bilinear Forms}

In this section we aim for yet another characterization of
order-bounded operators $T: \Ce(K) \to \Ell{1}(\prX)$ in the case 
that $\prX = (L, \mu)$ is a compact measure space. We shall
see that an operator $T$ is order-bounded if, and only if,
there is a (necessarily unique) complex regular Borel measure
$\mu_T$ on $K \times L$ such that 
\begin{equation}\label{ot.eq:bil-rep}
\int_{K \times L} f\tensor g \, \ud{\mu_T} = \int_L (Tf) \cdot g\, \ud{\mu}
\quad\text{for all $f\in \Ce(K)$ and $g\in \Ce(L)$.}
\end{equation}
This result goes essentially back to Grothendieck's characterization of
``integral'' operators in  \cite[p.141, Thm.{} 11]{GrothendieckPT}, 
but we give ad hoc proofs avoiding the 
tensor product theory. The following simple lemma is the key result here.

\begin{lem}\label{ot.l:bil-rep}
Let $K,L$ be compact spaces. Then, for any bounded operator
$T: \Ce(K) \to \Ce(L)$ and any $\mu \in \eM(L)$ 
there is a unique complex regular Borel measure
$\mu_T \in \eM(K\times L)$ such that \eqref{ot.eq:bil-rep} holds. 
Moreover, $\mu_T\ge 0$ whenever $\mu\ge 0$ and $T \ge 0$. 
\end{lem}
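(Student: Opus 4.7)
The plan is to define a linear functional $\Psi$ on the algebraic tensor product $\Ce(K) \tensor \Ce(L)$, viewed as a subspace of $\Ce(K \times L)$, by
$$\Psi\Bigl(\sum_i f_i \tensor g_i\Bigr) := \sum_i \int_L (Tf_i)\cdot g_i \, \ud{\mu},$$
to prove it is continuous for the uniform norm inherited from $\Ce(K\times L)$, to extend it by density to the whole space, and then to invoke the Riesz representation theorem to produce $\mu_T$. Uniqueness of $\mu_T$ will follow at once from the density of the algebraic tensor product in $\Ce(K \times L)$, which is guaranteed by the Stone--Weierstrass theorem since $\Ce(K) \tensor \Ce(L)$ is a unital, conjugation-invariant, point-separating subalgebra.

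The heart of the matter is the estimate $|\Psi(h)| \le \|T\|\,\|\mu\|\,\|h\|_\infty$ for $h = \sum_i f_i \tensor g_i$ in the algebraic tensor product. For fixed $y \in L$ the functional $T'\delta_y$ on $\Ce(K)$ has norm at most $\|T\|$, and
$$\sum_i (Tf_i)(y)\, g_i(y) \;=\; (T'\delta_y)\Bigl(\sum_i g_i(y)\, f_i\Bigr) \;=\; (T'\delta_y)\bigl(h(\,\cdot\,, y)\bigr),$$
whose modulus is bounded by $\|T\|\,\sup_{x\in K}|h(x,y)| \le \|T\|\,\|h\|_\infty$. Integrating against $\mu$ yields the desired bound. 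This pointwise-evaluation trick is the one place where the $\Ce(L)$-valuedness of $T$ is essential; for a general bounded operator into some abstract Banach space no such continuous extension to $\Ce(K \times L)$ need exist.

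For the positivity claim, assume $T \ge 0$ and $\mu \ge 0$. Then $\Psi(\sum_i f_i \tensor g_i) \ge 0$ whenever all $f_i,\, g_i \ge 0$, but an arbitrary $0 \le h \in \Ce(K \times L)$ need not be expressible as such a sum of \emph{positive} elementary tensors, so a further approximation argument is required. Given $\veps > 0$, I would use uniform continuity of $h$ to select finite open covers $(U_i)$ of $K$ and $(V_j)$ of $L$ such that the oscillation of $h$ on every rectangle $U_i \times V_j$ is at most $\veps$; then, taking partitions of unity $(\phi_i) \subset \Ce(K)_+$ and $(\psi_j) \subset \Ce(L)_+$ subordinate to these covers and choosing points $(x_i, y_j) \in U_i \times V_j$, the function
$$h'(x,y) := \sum_{i,j} h(x_i, y_j)\, \phi_i(x)\, \psi_j(y) = \sum_{i,j} h(x_i,y_j)\, (\phi_i \tensor \psi_j)(x,y)$$
belongs to the algebraic tensor product with non-negative coefficients, and a direct computation using $\sum_i \phi_i \equiv 1$ and $\sum_j \psi_j \equiv 1$ gives $\|h - h'\|_\infty \le \veps$. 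Hence $\Psi(h') \ge 0$, while $|\Psi(h) - \Psi(h')| \le \|\mu_T\|\,\veps$, and letting $\veps \to 0$ yields $\int h\, \ud{\mu_T} = \Psi(h) \ge 0$, so $\mu_T \ge 0$.

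The main obstacle is the sup-norm bound on $\Psi$ in the second paragraph; once it is in hand, existence, uniqueness and positivity all follow by routine density, Riesz-representation and partition-of-unity arguments. The critical structural input for that bound is that $T$ takes values in $\Ce(L)$, so that the pointwise-evaluation functionals $T'\delta_y$ are available and uniformly bounded by $\|T\|$.
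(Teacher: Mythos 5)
Your proof is correct, but it runs along a different track from the one in the paper. You define the functional $\Psi$ only on the algebraic tensor product $\Ce(K)\tensor\Ce(L)$, prove the sup-norm bound via the evaluation functionals $T'\delta_y$, and then extend by Stone--Weierstrass density before invoking Riesz representation; positivity then costs you an extra partition-of-unity approximation, since a positive element of $\Ce(K\times L)$ need not be a positive combination of positive elementary tensors. The paper instead builds a bounded operator $S:\Ce(K\times L)\to\Ce(L)$ directly on the whole space, as the composition $\Ce(K\times L)\cong\Ce(L;\Ce(K))\to\Ce(L;\Ce(L))\cong\Ce(L\times L)\to\Ce(L)$ (apply $T$ slot-wise, then restrict to the diagonal), and sets $\mu_T:=S'\mu$; concretely $(Sh)(y)=\bigl(Th(\cdot,y)\bigr)(y)$, which is exactly the integrand $\bigl(T'\delta_y\bigr)\bigl(h(\cdot,y)\bigr)$ that you produce on the dense subspace, so the two constructions yield the same measure. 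What the paper's route buys is that existence needs no density argument and, more importantly, positivity is immediate: when $T\ge 0$ the operator $S$ is a composition of positive maps, hence positive, hence $S'\mu\ge 0$ for $\mu\ge 0$ --- no approximation needed. What your route buys is a self-contained norm estimate $\norm{\mu_T}\le\norm{T}\,\norm{\mu}$ and a formulation that makes the link to Grothendieck's integral bilinear forms explicit. One small presentational point: you should note (even in one sentence) that your sup-norm bound, being expressed purely in terms of $h$ as a function, is also what guarantees that $\Psi$ is well defined independently of the chosen representation $\sum_i f_i\tensor g_i$.
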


\begin{proof}
The uniqueness is clear since $\Ce(K) \tensor \Ce(L)$ is dense
in $\Ce(K\times L)$. For the existence, let $S: \Ce(K \times L) \to \Ce(L)$
be given by composition of all of the operators in the following chain:
\[ \Ce(K\times L) \cong \Ce(L; \Ce(K)) \stackrel{T^\tensor}{\longrightarrow} 
\Ce(L; \Ce(L)) \cong \Ce(L\times L) \stackrel{D}{\longrightarrow} \Ce(L).
\]
Here, $T^\tensor$ denotes the operator $G \mapsto T\nach G$ and 
$D$ denotes the ``diagonal contraction'', defined by $DG(x) := G(x,x)$ for 
$x\in L$ and $G\in \Ce(L\times L)$.  Then $\mu_T := S'\mu$ satisfies
the requirements, as a short argument reveals.
%
\end{proof}

\begin{rems} \label{ot.rs:bil-rep} 
\begin{aufziii}
\item
The formula \eqref{ot.eq:bil-rep} stays true for all choices of
$f\in \Ce(K)$ and $g$ a bounded measurable function on $L$. 

\item
Our proof of Lemma \ref{ot.l:bil-rep} yields a formula for the
integration of a general $F\in \Ce(K\times L)$ with respect to $\mu_T$:
\[ \int_{K \times L} F(x,y) \,\ud{\mu_T(x,y)}
= \int_{L} \bigl( TF(\cdot, y) \bigr)(y)\, \ud{\mu(y)}.
\]
This means: fix $y \in L$, apply $T$ to the function $F(\cdot,y)$ and 
evaluate this at $y$; then integrate this funtion in $y$ with respect to $\mu$.

\item
Compare this proof of Lemma \ref{ot.l:bil-rep}  with  
the one  given in \cite[p.90/91]{Ouhabaz2005}. 

\item Lemma \ref{ot.l:bil-rep}
remains valid if $K$ and $L$ are merely locally compact, and
$\Ce(\:\cdot\:)$ is replaced by $\Co(\:\cdot\:)$ at each occurrence. 
\end{aufziii}
\end{rems}

\noindent
Combining Lemma \ref{ot.l:bil-rep} with a Stone model leads to the desired
general theorem.

\begin{thm}\label{ot.t:reg=int} 
Let $K$ be compact, $(L,\mu)$ a compact measure space,
and $T: \Ce(K) \to \Ell{1}(L,\mu)$ a linear operator. Then the
following assertions are equivalent:
\begin{aufzii}
\item $T$ is order-bounded.
\item $T$ is regular.
\item $T$ extends to a contraction $\Ell{1}(K,\nu) \to \Ell{1}(L,\mu)$
for some $0 \le \nu \in \eM(K)$.
\item There is a complex regular Borel measure 
$\mu_T \in \eM(K\times L)$ such that \eqref{ot.eq:bil-rep} holds.
\end{aufzii}
In this case, $\mu_T$ from {\upshape (iv)} is unique, and if $\nu$
is as in {\upshape (iii)}, then  $\abs{T}'\!\mu \le \nu$.  
\end{thm}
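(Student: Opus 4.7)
Equivalences (i) $\Leftrightarrow$ (ii) $\Leftrightarrow$ (iii) and the addendum $\abs{T}'\mu \le \nu$ are already contained in Theorem \ref{ot.t:ob-char}, so the essential new content is the equivalence with (iv) together with the uniqueness of $\mu_T$. I would close the loop by proving (iv) $\Rightarrow$ (iii) and (iii) $\Rightarrow$ (iv).

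For (iv) $\Rightarrow$ (iii), set $\nu := \pi_*\abs{\mu_T} \in \eM_+(K)$, where $\pi: K \times L \to K$ is the first projection. For $f \in \Ce(K)$, the Riesz duality identity
$$\norm{Tf}_{\Ell{1}(L,\mu)} = \norm{(Tf)\mu}_{\eM(L)} = \sup_{g \in \Ce(L),\,\norm{g}_\infty \le 1} \Bigl\lvert \int_L (Tf) \cdot g \, \ud\mu \Bigr\rvert$$
together with (iv) gives $\norm{Tf}_1 \le \sup_g \norm{g}_\infty \int_{K \times L} \abs{f \tensor g} \, \ud\abs{\mu_T} \le \int_K \abs{f} \, \ud\nu$, which is precisely (iii) for this choice of $\nu$.

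The main direction is (iii) $\Rightarrow$ (iv). The chief obstacle is that Lemma \ref{ot.l:bil-rep} only applies to operators between $\Ce$-spaces, whereas $T$ is $\Ell{1}$-valued. The plan is to move the $\Ell{1}$ side into the domain by duality and then replace the resulting $\Ell{\infty}$-range by a genuine $\Ce$-space via a Stone model. Concretely: let $\hat T: \Ell{1}(K,\nu) \to \Ell{1}(L,\mu)$ denote the contractive extension of $T$ provided by (iii), with adjoint $\hat T': \Ell{\infty}(L,\mu) \to \Ell{\infty}(K,\nu)$; let $\Psi: \Ell{\infty}(K,\nu) \to \Ce(\tilde K)$ be the Stone model of $(K,\nu)$, with corresponding image measure $\tilde \nu \in \eM(\tilde K)$ as in Section \ref{s.stone}. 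Then the composition $S := \Psi \after \hat T'|_{\Ce(L)}: \Ce(L) \to \Ce(\tilde K)$ is a bounded operator between $\Ce$-spaces, so Lemma \ref{ot.l:bil-rep} yields a unique $\tilde \mu_T \in \eM(L \times \tilde K)$ with
$$\int_{L \times \tilde K} g \tensor F \, \ud\tilde\mu_T = \int_{\tilde K} (Sg) \cdot F \, \ud\tilde\nu \qquad (g \in \Ce(L),\, F \in \Ce(\tilde K)).$$

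To transport $\tilde \mu_T$ back to $K \times L$, note that the composite $\Ce(K) \hookrightarrow \Ell{\infty}(K,\nu) \stackrel{\Psi}{\to} \Ce(\tilde K)$ is a unital $*$-algebra homomorphism between commutative $C^*$-algebras, so Gelfand--Naimark duality produces a continuous map $\rho: \tilde K \to K$ with $\Psi f = f \after \rho$ for all $f \in \Ce(K)$. Define $\mu_T \in \eM(K \times L)$ as the pushforward of $\tilde \mu_T$ under the continuous map $(y, \tilde x) \mapsto (\rho(\tilde x), y)$. Then for $f \in \Ce(K)$ and $g \in \Ce(L)$,
$$\int_{K \times L} f \tensor g \, \ud\mu_T = \int_{L \times \tilde K} g \tensor \Psi f \, \ud\tilde\mu_T = \int_{\tilde K} (\Psi \hat T' g) \cdot (\Psi f) \, \ud\tilde\nu = \int_K (\hat T' g) \cdot f \, \ud\nu = \int_L Tf \cdot g \, \ud\mu,$$
using that $\Psi$ preserves integrals and finally the defining duality between $\hat T$ and $\hat T'$. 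Uniqueness of $\mu_T$ follows from Stone--Weierstrass, since $\Ce(K) \tensor \Ce(L)$ is dense in $\Ce(K \times L)$.
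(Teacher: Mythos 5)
Your proof is correct. The equivalence of (i)--(iii) and the addendum $\abs{T}'\!\mu\le\nu$ are indeed already contained in Theorem \ref{ot.t:ob-char}, and your argument for (iv)$\Rightarrow$(iii) (push $\abs{\mu_T}$ forward to $K$ and test against $g\in\Ce(L)$ with $\norm{g}_\infty\le 1$) is the same as the paper's. For the main implication (iii)$\Rightarrow$(iv) both you and the paper funnel the problem through Lemma \ref{ot.l:bil-rep} by means of a Stone model, but you work on opposite sides of the bilinear form. The paper normalizes $T$ itself, setting $Sf=Tf/(\linabs{T}\car)$ so that $S$ maps $\Ce(K)$ into $\Ell{\infty}(L,\mu)$, identifies the latter with $\Ce(\Omega)$ via the Stone model of the \emph{target} $(L,\mu)$, applies the lemma to $S$ against the measure $(\linabs{T}\car)\tilde{\mu}$, and pulls the resulting measure back; this route starts from (i) and needs the linear modulus just to define $S$. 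You instead dualize: the adjoint $\hat{T}'$ of the contractive extension lands in $\Ell{\infty}(K,\nu)$, which you turn into $\Ce(\tilde{K})$ via the Stone model of the \emph{source} measure $(K,\nu)$, apply the lemma against $\tilde{\nu}$, and transport the measure back along the Gelfand dual map $\rho$. Your version starts directly from (iii), avoids the modulus, the division by $\linabs{T}\car$ and the case distinction on $\set{\linabs{T}\car=0}$, at the price of invoking the adjoint and the continuous map $\rho:\tilde{K}\to K$; both versions yield positivity of $\mu_T$ for positive $T$ and settle uniqueness by the same Stone--Weierstrass density argument.
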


\begin{proof}
It was shown in Theorem \ref{ot.t:ob-char}
that (i)--(iii) are pairwise equivalent. 

Denote by $\pi_K: K \times L \to K$ the canonical projection. Suppose that (iv) holds and 
let $\nu = (\pi_K)_*\abs{\mu_T}$, i.e., 
\[ \int_K f \, \ud{\nu} = \int_{K \times L} f\tensor \car\, \ud{\abs{\mu_T}}
\qquad (f\in \Ce(K)).
\]
Then, for $f\in \Ce(K)$ and $g\in \Ce(L)$ with $\abs{g}\le 1$, 
\begin{align*}
 \Big| & \int_L Tf\cdot g \, \ud{\mu} \Big|
\le \int_{K\times L}  \abs{f}\tensor \abs{g} \, \ud{\abs{\mu_T}}
 \le \int_{K\times L}  \abs{f}\tensor \car \, \ud{\abs{\mu_T}}
= \int_K \abs{f} \, \ud{\nu}.
\end{align*}
This implies that $T$ extends to a contraction $\Ell{1}(K,\nu) \to 
\Ell{1}(L,\mu)$, whence we have (iii).

\smallskip
\noindent
Now suppose that (i)--(iii) hold. In order to prove (iv) 
define the operator $S: \Ce(K) \to \Ell{\infty}(L,\mu)$ by
\[ Sf  := \begin{cases} \frac{Tf}{\linabs{T}\car} & \text{on\,  
$\set{\linabs{T}\car > 0}$},\\
0  & \text{on\,  $\set{\linabs{T}\car = 0}$}.
\end{cases}
\]
Let  $\Phi: \Ell{1}(L,\mu) \to \Ell{1}(\Omega,\tilde{\mu})$ 
be the Stone model of $(L,\mu)$ (see Section \ref{s.stone} above), and 
let us identify $\Ell{\infty}(L,\mu)$ with $\Ce(\Omega)$ via $\Phi$. 
Then $S: \Ce(K) \to \Ce(\Omega)$ is a positive 
operator. Hence we can apply Lemma \ref{ot.l:bil-rep} to $S$ and
the positive measure $(\linabs{T}\car)\tilde{\mu}$ 
to obtain a positive measure $\rho$ on 
$K \times \Omega$ such that 
\begin{align*}
\int_{K \times \Omega}  f \tensor g\, \ud{\rho}
&= \int_\Omega Sf \cdot g \, \ud{ (\linabs{T}\car)\tilde{\mu}}
= \int_\Omega Sf \cdot \linabs{T}\car \cdot g \, \ud{\tilde{\mu}}
\\ & = \int_\Omega Tf \cdot g \, \ud{\tilde{\mu}} = \int_{L} Tf\cdot g\, \ud{\mu}.
\end{align*}
Finally, let $\mu_T$ be the pull-back of $\rho$ to $K\times L$
via the canonical inclusion map $\Ce(L) \to \Ell{\infty}(L, \mu)=\Ce(\Omega)$.
\end{proof}

\begin{rem}
With a little  more effort one can extend Theorem \ref{ot.t:reg=int} 
to the case of {\em locally compact} (and not necessarily finite) measure spaces $(K,\nu)$
and $(L,\mu)$ instead of compact
ones, cf.{} Remarks \ref{ot.r:regops} and \ref{ot.rs:bil-rep}  above. 
Then the decisive implication 
(ii)$\Rightarrow$(iv) is proved by passing first to open and relatively
compact subsets $U\subseteq K$ and $V\subseteq L$ and considering the
operator $T_{U,V}: \Co(U) \to \Ell{1}(V, \mu)$. By modifying our proof,
one then obtains a measure $\mu_T^{U,V}$ on $U \times V$, and 
finally $\mu_T$ as an inductive limit. (Of course, one has
to speak of Radon measures here.) Compare this to the ad hoc
approach in \cite[Lemma 1.4.1]{Fukushimaetal2011}.

Theorem \ref{ot.t:reg=int}  can also be generalized to the case
that $K$ and $L$ are {\em Polish} (but not necessarily locally
compact) spaces and $\mu$ is a finite positive Borel measure on $L$.
In this case the decisive implication (ii)$\Rightarrow$(iv) is proved
as follows: first, one chooses compact metric models $(K',\nu')$ and $(L',\mu')$
for the
finite Polish measure spaces $(K,\nu)$ and $(L,\mu)$, respectively, 
see \cite[Sec.{} 12.3]{EFHNPre}; by a theorem of von Neumann 
\cite[App.{} F.3]{EFHNPre}, 
the isomorphisms between the original
measure spaces and their models are induced by measurable maps $\vphi:
K'\to K$ and $\psi: L'\to L$, say.   Theorem \ref{ot.t:reg=int}
yields --- for the transferred operator --- 
a representing measure on $K'\times L'$, and
this is mapped by $\vphi \times \psi$ to a representing measure 
on $K \times L$ for the original operator.   
\end{rem}

 \noindent
We now combine the integral Theorem \ref{ot.t:reg=int} 
with the construction of the modulus. We employ the notation 
$\pi_L: K \times L \to L$ for the canonical projection, and identify
\[ \Ell{1}(L,\mu) = \{ \lambda \in \eM(L) \st  \abs{\lambda} \ll \mu\}
\]
with a closed ideal in $\eM(L)$ via the Radon-Nikod\'ym theorem.

\begin{thm}\label{ot.t:rep-mod}
Suppose that $K$ and $L$ are compact spaces and $0 \le \mu \in \eM(L)$.
Then, for any order-bounded operator $T: \Ce(K) \to \Ell{1}(L,\mu)$,
\[ \abs{\mu_T} = \mu_{\abs{T}}.
\]
The  mapping
\[ \BL^r(\Ce(K), \Ell{1}(L,\mu)) \to \eM(K\times L), \qquad T \mapsto \mu_T
\]
is an isometric lattice homomorphism onto the closed ideal 
\[ \{ \rho \in \eM(K\times L) \st \pi_{L\ast}\!\abs{\rho} \in  \Ell{1}(L,\mu) \}
\]
of $\eM(K\times L)$. 
\end{thm}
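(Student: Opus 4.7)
The heart of the theorem is the modulus identity $\abs{\mu_T} = \mu_{\abs{T}}$; granted this, the isometric lattice-homomorphism property and the identification of the image follow by routine checks. My plan is to establish the modulus identity via two matching inequalities, with the reverse inequality relying on a surjectivity construction that recovers an operator from a measure in the stated ideal.

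For $\abs{\mu_T} \le \mu_{\abs{T}}$ it suffices to prove
\[ \Bigl|\int_{K\times L} F\, \ud{\mu_T}\Bigr| \,\le\, \int_{K\times L} \abs{F}\, \ud{\mu_{\abs{T}}} \qquad (F \in \Ce(K\times L)), \]
since the variational characterization of the total variation on the compact space $K\times L$ then yields $\abs{\mu_T}(V) \le \mu_{\abs{T}}(V)$ on open $V$, and hence on all Borel sets by regularity. Viewing $F$ as a continuous map $L\to \Ce(K)$ and exploiting compactness of $L$, I would approximate $F$ in supremum norm by bounded Borel functions $F_\veps = \sum_j F(\cdot, y_j^\veps) \tensor \car_{V_j^\veps}$, where $(V_j^\veps)_j$ is a finite Borel partition of $L$ and $y_j^\veps \in V_j^\veps$. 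Extending the defining identity \eqref{ot.eq:bil-rep} to $\int f \tensor \car_V \, \ud{\mu_T} = \int_V Tf\, \ud{\mu}$ for any Borel $V\subseteq L$, one computes
\[ \Bigl|\int F_\veps \, \ud{\mu_T}\Bigr| = \Bigl|\sum_j \int_{V_j^\veps} T F(\cdot, y_j^\veps)\, \ud{\mu}\Bigr| \le \sum_j \int_{V_j^\veps} \abs{T}\bigl(\abs{F(\cdot, y_j^\veps)}\bigr)\, \ud{\mu} = \int \abs{F_\veps}\, \ud{\mu_{\abs{T}}}, \]
and passing to the limit $\veps \to 0$ (both sides are continuous in the supremum norm since $\abs{\mu_T}(K\times L), \mu_{\abs{T}}(K\times L) < \infty$) delivers the desired continuous estimate.

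For the reverse inequality $\mu_{\abs{T}} \le \abs{\mu_T}$, I would first establish the surjectivity of $T\mapsto\mu_T$ onto the stated ideal. For $\rho \in \eM(K\times L)$ with $\pi_{L\ast}\abs{\rho} \in \Ell{1}(L,\mu)$, the functional $g \mapsto \int f\tensor g\,\ud{\rho}$ is a complex measure on $L$ bounded in total variation by $\norm{f}_\infty \pi_{L\ast}\abs{\rho}$, hence absolutely continuous with respect to $\mu$; its Radon--Nikod\'ym density defines $Tf\in \Ell{1}(L,\mu)$, and the estimate $\abs{Tu} \le \ud \pi_{L\ast}(f\cdot \abs{\rho})/\ud \mu$ whenever $\abs{u}\le f$ shows that $T$ is order-bounded, while $\mu_T = \rho$ by uniqueness. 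Applied to $\rho = \abs{\mu_T}$, which belongs to the ideal by the previous paragraph (since $\pi_{L\ast}\abs{\mu_T} \le \pi_{L\ast}\mu_{\abs{T}} = (\abs{T}\car)\cdot \mu \in \Ell{1}$), this yields a positive operator $T'$ with $\mu_{T'} = \abs{\mu_T}$. Then for any $u \in \Ce(K)$, $v \in \Ce(L)$ with $\abs{v} \le 1$, and $0 \le g \in \Ce(L)$,
\[ \Bigl|\int Tu \cdot vg\, \ud{\mu}\Bigr| = \Bigl|\int u \tensor vg\, \ud{\mu_T}\Bigr| \le \int \abs{u}\tensor g\, \ud{\abs{\mu_T}} = \int g \cdot T'\abs{u}\, \ud{\mu}, \]
and taking the supremum over such $v$ (approximating the measurable sign of $Tu$ by continuous functions via Lusin's theorem) recovers $\int g \abs{Tu}\, \ud{\mu}$ on the left; hence $\abs{Tu} \le T'\abs{u}$ almost everywhere for every $u \in \Ce(K)$. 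Since $T' \ge 0$, the characterizing formula \eqref{ot.eq.mod-def} gives $\abs{T}u = \sup_{\abs{v}\le u}\abs{Tv} \le T'u$ for $u\ge 0$, so $\abs{T}\le T'$, and therefore $\mu_{\abs{T}} \le \mu_{T'} = \abs{\mu_T}$ by the monotonicity of $S \mapsto \mu_S$ on positive operators.

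The remaining claims now follow at once. The isometry reads $\norm{T}_r = \norm{\abs{T}\car}_{\Ell{1}} = \mu_{\abs{T}}(K\times L) = \abs{\mu_T}(K\times L) = \norm{\mu_T}_\eM$ (using that a positive operator on $\Ce(K)$ attains its operator norm at~$\car$); the lattice-homomorphism property is precisely the modulus-preservation identity just proved; the image lies in the ideal because $\pi_{L\ast}\abs{\mu_T} = (\abs{T}\car)\cdot \mu \in \Ell{1}(L,\mu)$, while the opposite inclusion is the surjectivity above; and the ideal is closed as the preimage of the closed ideal $\Ell{1}(L,\mu) \subseteq \eM(L)$ under the continuous lattice map $\rho \mapsto \pi_{L\ast}\abs{\rho}$. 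The principal difficulty is the Borel-partition approximation in the second paragraph: it is what converts the elementary inequality $\abs{Tf}\le \abs{T}\abs{f}$ on elementary tensors into a uniform integral bound valid for every continuous test function on $K\times L$.
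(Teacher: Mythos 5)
Your proof is correct, but both halves of the modulus identity are argued differently from the paper. For $\abs{\mu_T}\le\mu_{\abs{T}}$ the paper simply observes that $T\mapsto\mu_T$ is linear and positive, so the inequality is the standard complex-lattice estimate $\abs{\Phi(x)}\le\Phi(\abs{x})$ for positive $\Phi$ (using $\Re(e^{\ui\theta}T)\le\abs{T}$); your partition-of-$L$ approximation reaches the same conclusion by hand, at the cost of one technicality you gloss over, namely extending \eqref{ot.eq:bil-rep} to $f\tensor\car_V$ for Borel $V$ — this needs a word (bounded pointwise limits give Baire sets, regularity of $\mu_T$, $\mu_{\abs{T}}$ and $\abs{Tf}\mu$ give Borel sets; alternatively a continuous partition of unity on $L$ avoids the issue entirely). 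For the reverse inequality $\mu_{\abs{T}}\le\abs{\mu_T}$ the routes genuinely diverge: the paper sets $\nu:=\pi_{K*}\abs{\mu_T}$, shows $T$ extends to a contraction $\Ell{1}(K,\nu)\to\Ell{1}(L,\mu)$ using a measurable sign function $h$ with $\abs{Tf}=(Tf)h$, and invokes the minimality characterization of $\abs{T}'\!\mu$ from Theorem \ref{ot.t:ob-char} plus a ``standard argument'' to pass from total masses to the measure inequality; you instead use the surjectivity construction (which is needed anyway for the ``onto'' claim) to realize $\abs{\mu_T}=\mu_{T'}$ for a positive operator $T'$, prove $\abs{Tu}\le T'\abs{u}$ by testing against $vg$ and using $\Ell{1}$--$\Ell{\infty}$ duality, and conclude $\abs{T}\le T'$ from the defining supremum \eqref{ot.eq.mod-def}. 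Your version is arguably cleaner in that it bypasses both Theorem \ref{ot.t:ob-char} and the unstated ``standard argument,'' yielding the full inequality of measures directly rather than only an inequality of total masses; the paper's version buys a shorter first half and reuses machinery already established. The closing routine checks (isometry via $\norm{\abs{T}}=\norm{\abs{T}\car}_1$, closedness of the ideal) are fine, though note that $\rho\mapsto\pi_{L*}\abs{\rho}$ is not linear, so ``preimage of a closed ideal'' should be read as the elementary verification that the set is norm-closed (via $\bigl\lvert\,\abs{\rho}-\abs{\sigma}\,\bigr\rvert\le\abs{\rho-\sigma}$) and solid.
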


\begin{proof}
It is clear that the mapping $T\mapsto \mu_T$ is linear, injective
and positive. Hence $\abs{\mu_T}\le \mu_{\abs{T}}$, and therefore 
$\pi_{L*}\abs{\mu_T} \le \pi_{L*} \mu_{\abs{T}} =  (\linabs{T}\car) \mu \in \Ell{1}(L,\mu)$.
Conversely, suppose that 
$\rho \in \eM(K\times L)$ such that $\pi_{L*}\abs{\rho} \in \Ell{1}(L,\mu)$. 
For $f\in \Ce(K)$ consider the linear mapping
\[ T: \Ce(K) \to \eM(L),\qquad (Tf)g := \int_{K \times L} f\tensor g\, \ud{\rho}.
\]
Then $\abs{Tf}\le \norm{f}_{\infty} \pi_{L*}\!\abs{\rho}$, 
whence $Tf \in \Ell{1}(L,\mu)$. 
Hence, by construction,
\[ \int_{K \times L} f\tensor g\, \ud{\rho} = \int_L Tf \cdot g \, \ud{\mu}
\]
for $f\in \Ce(K)$ and $g\in \Ce(L)$. By Theorem \ref{ot.t:reg=int}, 
$T$ is regular. If $\rho$ is positive, then $T$ is positive, too.

\smallskip
\noindent
The proof of the converse inequality
$\mu_{\abs{T}}\le \abs{\mu_T}$ would now follow immediately if we
used the fact (from Remark \ref{ot.r:regops}) 
that the modulus map turns $\BL^r$, the set of regular operators, into 
a complex vector lattice. However, we want to give a different proof here.

\smallskip
\noindent
By a standard argument, it  suffices to establish the inequality
\[\int_L \linabs{T}\car\, \ud{\mu} \le
\int_{K \times L} \car \tensor \car\, \ud{\abs{\mu_T}}.
\]
To this end, define the positive measure $\nu$ on $K$ by 
\[ \int_K f\, \ud{\nu} := \int_{K\times L} f\tensor \car \, \ud{\abs{\mu_T}}
\quad(f\in \Ce(K)).
\]
Given $f\in \Ce(K)$ there is a bounded measurable function
$h$ on $L$ such that $\abs{Tf} = (Tf) h$ and $\abs{h}\le 1$.  Hence,
\[ \int_L \abs{Tf}\, \ud{\mu} 
= \int_L Tf\cdot h\, \ud{\mu} = \int_{K \times L} f \tensor h\, \ud{\mu_T}
\le 
\int_{K \times L} \abs{f} \tensor \car\, \ud{\mu_T} = 
\int_K \abs{f} \, \ud{\nu}.
\]
This means that $T$ extends to a contraction $\Ell{1}(K,\nu)\to \Ell{1}(L,\mu)$.
By Theorem \ref{ot.t:ob-char}, it follows that $\abs{T}'\!\mu \le \nu$, whence
in particular
\[  \int_L \linabs{T}\car\, \ud{\mu} = \int_K \car \ud{(\abs{T}'\!\mu)}
\le \int_K \car \,\ud{\nu} = \int_{K \times L} \car\tensor \car \,
\ud{\abs{\mu_T}}.
\]
This concludes the proof.
\end{proof}

\begin{rem}
One can avoid the use of the bounded measurable  
function $h$ in the second part
of the proof of Theorem \ref{ot.t:rep-mod} by passing to the Stone model
of $\Ell{1}(L,\mu)$. 
\end{rem}

\noindent
In case that $T$ has additional properties,
one can extend the defining formula for the measure $\mu_T$
to some non-continuous functions.

\begin{thm}\label{ot.t:mu_T-Lp}
Let $(K,\nu)$ and $(L,\mu)$ be compact measure spaces, 
and let $T: \Ce(K) \to \Ell{\infty}(L,\mu)$ be a bounded operator
that extends to a bounded operator $\Ell{1}(K,\nu) \to \Ell{1}(L,\mu)$.
Then the formula
\begin{equation}\label{ot.eq:mu_T-Lp}
 \int_L Tf \cdot g\, \ud{\mu} = \int_{K\times L} f\tensor g\, \ud{\mu_T}
\end{equation}
holds for all $f\in \Ell{p}(K, \nu)$, $g\in \Ell{q}(L,\mu)$
and  $1\le p,\:q \le \infty$ with $\frac{1}{p} + \frac{1}{q}=1$.
\end{thm}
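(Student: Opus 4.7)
For $f \in \Ce(K)$, $g \in \Ce(L)$ the identity \eqref{ot.eq:mu_T-Lp} is precisely the defining property of $\mu_T$ from Theorem \ref{ot.t:reg=int}. The plan is to show that both sides of \eqref{ot.eq:mu_T-Lp} are jointly continuous bilinear forms on $\Ell{p}(K,\nu) \times \Ell{q}(L,\mu)$ and then to conclude by density of continuous functions.

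The measure $\mu_T$ is available because the hypothesized $\Ell{1}(K,\nu) \to \Ell{1}(L,\mu)$ extension of $T$ renders $T : \Ce(K) \to \Ell{1}(L,\mu)$ order-bounded (Theorem \ref{ot.t:ob-char}). Tracing through the proof of Theorem \ref{ot.t:rep-mod} one reads off the marginal estimates
\[
\pi_{K*}|\mu_T| \le \linabs{T}'\!\mu \le C\,\nu, \qquad \pi_{L*}|\mu_T| \le (\linabs{T}\car)\,\mu,
\]
where $\linabs{T}\car$ lies in $\Ell{\infty}(L,\mu)$ because $T : \Ce(K) \to \Ell{\infty}(L,\mu)$ is bounded. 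Hölder's inequality applied to $|\mu_T|$ by factoring $|f \otimes g| = (|f| \otimes \car)(\car \otimes |g|)$ then yields
\[
\int_{K \times L} |f \otimes g| \, \ud{|\mu_T|} \le \Bigl(\int_K |f|^p \, \ud{\pi_{K*}|\mu_T|}\Bigr)^{1/p}\Bigl(\int_L |g|^q \, \ud{\pi_{L*}|\mu_T|}\Bigr)^{1/q} \le C'\|f\|_p \|g\|_q,
\]
so the right-hand side of \eqref{ot.eq:mu_T-Lp} is a jointly continuous bilinear form on $\Ell{p}(K,\nu) \times \Ell{q}(L,\mu)$ (with the obvious modifications in the endpoint cases).

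For the left-hand side one needs $T$ to extend boundedly from $\Ell{p}(K,\nu)$ to $\Ell{p}(L,\mu)$; by the Riesz--Thorin interpolation theorem it suffices to establish the $\Ell{\infty} \to \Ell{\infty}$ endpoint. Given $f \in \Ell{\infty}(K,\nu)$, truncation combined with the density of $\Ce(K)$ in $\Ell{1}(K,\nu)$ (or, equivalently, the Stone model of $(K,\nu)$) produces $f_n \in \Ce(K)$ with $\|f_n\|_\infty \le \|f\|_\infty$ and $f_n \to f$ in $\Ell{1}(K,\nu)$; the hypothesis gives $Tf_n \to Tf$ in $\Ell{1}(L,\mu)$, and passing to a $\mu$-a.e.\ convergent subsequence delivers $\|Tf\|_\infty \le M \|f\|_\infty$ with $M := \|T\|_{\Ce(K) \to \Ell{\infty}(L,\mu)}$.

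It remains to invoke density. When $p,q < \infty$, continuous functions are norm-dense in the respective $\Ell{p}$- and $\Ell{q}$-spaces, and the identity propagates from $\Ce(K) \times \Ce(L)$ by the continuity established above. The main technical obstacle is the endpoint case $q = \infty$ (or its symmetric companion), where $\Ce(L)$ is \emph{not} norm-dense in $\Ell{\infty}(L,\mu)$: one fixes $f \in \Ce(K)$, chooses by Lusin a uniformly bounded sequence $g_n \in \Ce(L)$ with $g_n \to g$ pointwise $\mu$-a.e., and passes to the limit by dominated convergence on both sides, exploiting the absolute continuity $\pi_{L*}|\mu_T| \ll \mu$ from Step~1 to lift the $\mu$-a.e.\ convergence to $|\mu_T|$-a.e.\ convergence on $K \times L$; a subsequent $\Ell{1}$-approximation of $f$ by continuous functions then covers general $f \in \Ell{1}(K,\nu)$.
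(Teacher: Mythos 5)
Your proof is correct and takes essentially the same route as the paper: the key estimate in both is H\"older against the marginals of $\mu_{\abs{T}}$ (equivalently, $\int_{K\times L}\abs{f}^p\tensor\car\,\ud{\mu_{\abs{T}}}=\int_L \linabs{T}\abs{f}^p\,\ud{\mu}\le\|f\|_{\Ell{p}(K,\nu)}^p$ and $\int_{K\times L}\car\tensor\abs{g}^q\,\ud{\mu_{\abs{T}}}=\int_L(\linabs{T}\car)\abs{g}^q\,\ud{\mu}$), combined with interpolation for the $\Ell{p}$-boundedness of $T$ and a density argument. The only divergence is in the approximation bookkeeping: the paper first extends \eqref{ot.eq:mu_T-Lp} to all bounded Baire-measurable $f,\,g$ by a functional monotone-class argument and then approximates in norm and almost everywhere, whereas you pass directly from $\Ce(K)\times\Ce(L)$ by density when $1<p<\infty$ and invoke Lusin together with the absolute continuity of the marginals at the endpoint exponents; both are valid.
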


\begin{proof}
We may suppose that $T: \Ell{1}(K,\nu)\to \Ell{1}(L,\mu)$ (and hence also $\abs{T}$) is a contraction. 
In a first step, we
claim that the formula \ref{ot.eq:mu_T-Lp} holds for all bounded Baire measurable functions $f,\:g$ on $K,\: L$,
respectively. Indeed, this follows from a standard argument by virtue of the dominated convergence theorem and the fact that
the bounded Baire-measurable functions on a compact space form the smallest set of functions
that contains the continuous ones and is closed under pointwise convergence of uniformly bounded sequences,
see \cite[Thm.{} E.1]{EFHNPre}. 

Replacing $T$ by $\abs{T}$ in \ref{ot.eq:mu_T-Lp} we then can estimate for bounded Baire measurable
functions $f$ and $g$ and $1 < p < \infty$ 
\begin{align*}
\int_{K \times L} & \abs{f\tensor g} \, \ud{\mu_{\abs{T}}} 
= 
\int_{K \times L} (\abs{f}\tensor \car) \cdot (\car \tensor \abs{g})  \, \ud{\mu_{\abs{T}}} 
\\ & \le \Bigl( \int_{K\times L} \abs{f}^p \tensor \car\,\ud{\mu_{\abs{T}}}\Bigr)^\frac{1}{p} 
\cdot \Bigl( \int_{K\times L} \car\tensor \abs{g}^q\,\ud{\mu_{\abs{T}}}\Bigr)^\frac{1}{q}
\\ & =
 \Bigl( \int_{L} \linabs{T}\abs{f}^p\,\ud{\mu}\Bigr)^\frac{1}{p} 
\cdot \Bigl( \int_{L} (\linabs{T}\car)\cdot \abs{g}^q\,\ud{\mu}\Bigr)^\frac{1}{q}
\\ & \le
 \Bigl( \int_{K} \abs{f}^p\,\ud{\nu}\Bigr)^\frac{1}{p} 
\cdot \Bigl( \int_{L} (\linabs{T}\car)\cdot \abs{g}^q\,\ud{\mu}\Bigr)^\frac{1}{q}
= \norm{f}_{\Ell{p}(\nu)} \,\norm{(\linabs{T}\car)^\frac{1}{q} g}_{\Ell{q}(\mu)}.
\end{align*}
It follows that if $A$ is a $\nu$-null Baire set of $K$  and $B$ is a $\mu$-null Baire set of $L$, then
the sets $A \times L$ and $K\times B$ are  $\mu_{\abs{T}}$-null Baire sets of $K \times L$. 
Moreover,  the bilinear mapping $(f,g) \mapsto f \tensor g$
extends to a bounded bilinear mapping 
\[ \Ell{p}(K, \nu) \times \Ell{q}(L,\mu) \to \Ell{1}(K\times L, \mu_{\abs{T}}).
\]
By interpolation, $T$ is $\Ell{p}$-bounded, and hence the bilinear mapping
$(f,g) \mapsto Tf \cdot g$ is a bounded bilinear mapping
$\Ell{p}(K,\nu) \times \Ell{q}(L,\mu) \to \Ell{1}(L,\mu)$. Now
\eqref{ot.eq:mu_T-Lp} 
holds for bounded Baire-measurable functions $f$ and $g$, 
whence by approximation for all $f\in \Ell{p}(K,\nu)$ and $g\in \Ell{q}(L,\mu)$.
(Choose sequences that approximate in norm and almost everywhere.)

\noindent
Finally, consider $p =1$ (the case $q=1$ being similar). 
If $g \in \Ell{\infty}(L,\mu)$ then, by choosing a Baire-measurable representative for $g$ such that $\norm{g}_\infty = 
\norm{g}_{\Ell{\infty}(L,\nu)}$ and using the results from above, we can estimate 
for each $f\in \Ell{\infty}(K, \nu)$,
\begin{align*}
 \int_{K\times L} & \abs{f\tensor g} \, \ud{\mu_{\abs{T}}} = 
\int_{K \times L} (\abs{f}\tensor \car) \cdot (\car \tensor \abs{g})  \, \ud{\mu_{\abs{T}}} 
\\ & 
\le \int_{K \times L} \abs{f}\tensor \car \cdot \norm{g}_\infty  \, \ud{\mu_{\abs{T}}} 
= \norm{ \abs{T} \abs{f} }_{\Ell{1}(L,\nu)} \norm{g}_\infty
\\ & \le \norm{f}_{\Ell{1}(K,\nu)} \, \norm{g}_{\Ell{\infty}(L,\mu)}.
\end{align*}
The assertion then follows by approximation (almost everywhere and in norm) as before. 
\end{proof}


\begin{rem}
If an operator $T: \Ce(K) \to \Ell{1}(L,\mu)$ factors through 
$\Ell{\infty}(L,\mu)$, it is of course order-bounded, and hence
its modulus exists. If, in addition, it factors even through $\Ce(K)$, 
then the existence of $\mu_T$  follows from Lemma \ref{ot.l:bil-rep}
directly and one does not have to pass through the Stone model. 
If $(L,\mu)$ {\em is} already its own Stone model (as is the case 
in the proof of Theorem \ref{mr.t:main} after the reduction step in 
Section \ref{s.stone}) then also $\abs{T}$ factors
through $\Ce(L)$, and hence Lemma \ref{ot.l:bil-rep} is completely sufficient
to construct the measures $\mu_T$ and $\mu_{\abs{T}}$.  
\end{rem}

\noindent
Using modern tensor product terminology, we have
\[ \Ce(K\times L) = \Ce(K) \tensor_\veps \Ce(L) 
\subseteq \Ce(K) \tensor_\veps \Ell{\infty}(L,\mu) 
= \Ce(K) \tensor_\veps \Ell{1}(L,\mu)'.
\]
This implies (via the Stone model of $(L,\mu)$) 
that an operator $T: \Ce(K) \to \Ell{1}(L,\mu)$ 
is ``integral'' (in the sense of Grothendieck) if and only
if there is $\mu_T \in \eM(K\times L)$ such that \eqref{ot.eq:bil-rep} holds. 
Hence, the decisive equivalence of (ii) and (iv) in Theorem \ref{ot.t:rep-mod} 
is essentially \cite[p.141, Thm.{} 11]{GrothendieckPT}. Schaefer incorporates
these results in his systematic study of operators between Banach lattices,
see \cite[IV, Theorem 5.6]{Schaefer1974}.  
However, the property $\abs{\mu_T} = \mu_{\abs{T}}$, essential for 
our application below,  does not appear there. It has been stated
and proved explicitly in \cite[Lemma 30]{CarbDrag13Pre}, but our proof
is different.     


\subsection{The Disintegration Theorem}

In this section we develop further the results of the previous section.
The endpoint will be a ``disintegration'' theorem for
operators of the form $\Id - T$, where 
$T$ is a symmetric absolute contraction over a compact measure space. 

We start with some auxiliary results.

\begin{prop}\label{ot.p:rep-TS}
Let $(K,\nu)$ and $(L,\mu)$ be compact measure spaces and let 
$T: \Ce(K) \to \Ell{1}(L,\mu)$ and $S: \Ce(L) \to \Ell{1}(K,\nu)$ 
be  linear operators such that
\begin{equation}\label{ot.eq:rep-TS}
 \int_L Tf \cdot g \, \ud{\mu} = \int_K f \cdot Sg\, \ud{\nu}
\qquad (f\in \Ce(K),\:g\in \Ce(L)).
\end{equation}
If one of the operators
$T$ and $S$ is order-bounded, then so is the other and  
 \eqref{ot.eq:rep-TS} holds with $T$ and $S$ replaced by 
$\abs{T}$ and $\abs{S}$, respectively. Moreover,
$\mu_T = r_*\nu_S$, where
$r: L \times K \to K \times L$ is the swapping map defined by 
$r(x,y)  = (y,x)$. 
\end{prop}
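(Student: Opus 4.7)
The plan is to use Theorem \ref{ot.t:reg=int} to convert the order-boundedness of $T$ (resp.\ $S$) into the existence of the representing measure $\mu_T$ (resp.\ $\nu_S$), and then transport one measure to the other via the swap map $r$.

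First I would assume, without loss of generality, that $T$ is order-bounded. By Theorem \ref{ot.t:reg=int} there is a unique $\mu_T\in\eM(K\times L)$ with $\int_{K\times L} f\tensor g\,\ud\mu_T = \int_L Tf\cdot g\,\ud\mu$ for $f\in\Ce(K)$, $g\in\Ce(L)$. Define $\rho := r_*\mu_T \in \eM(L\times K)$. Since for $g\in\Ce(L)$, $f\in\Ce(K)$ the function $g\tensor f$ on $L\times K$ satisfies $(g\tensor f)\nach r = f\tensor g$, the change-of-variables formula combined with the hypothesis \eqref{ot.eq:rep-TS} gives
\[
\int_{L\times K} g\tensor f \,\ud\rho
= \int_{K\times L} f\tensor g\,\ud\mu_T
= \int_L Tf\cdot g\,\ud\mu
= \int_K Sg\cdot f\,\ud\nu.
\]
Hence $\rho$ is a measure on $L\times K$ that represents the bilinear form $(g,f)\mapsto \int_K Sg\cdot f\,\ud\nu$ in the sense of condition (iv) of Theorem \ref{ot.t:reg=int} (applied to $S$). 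That theorem then yields that $S$ is order-bounded, and by uniqueness $\nu_S = \rho = r_*\mu_T$. Since $r$ is an involution, this is equivalent to $\mu_T = r_*\nu_S$, which is the displayed identity.

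Next I would transfer the modulus. Theorem \ref{ot.t:rep-mod} gives $|\mu_T| = \mu_{|T|}$ and $|\nu_S| = \nu_{|S|}$. Pushforward by the homeomorphism $r$ commutes with taking absolute value of a measure, so
\[
\nu_{|S|} = |\nu_S| = |r_*\mu_T| = r_*|\mu_T| = r_*\mu_{|T|}.
\]
Testing this identity against elementary tensors $g\tensor f$ and unwinding the representing-measure formulas (again using $(g\tensor f)\nach r = f\tensor g$) yields, for all $f\in\Ce(K)$ and $g\in\Ce(L)$,
\[
\int_K f\cdot |S|g\,\ud\nu
= \int_{L\times K} g\tensor f \,\ud\nu_{|S|}
= \int_{K\times L} f\tensor g\,\ud\mu_{|T|}
= \int_L |T|f\cdot g\,\ud\mu,
\]
which is the asserted duality for the moduli.

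The argument is essentially symmetric, so starting from $S$ order-bounded one constructs $\mu_T := r_*\nu_S$ by the same recipe and verifies the hypotheses of Theorem \ref{ot.t:reg=int} for $T$. The main conceptual point — and the only place where something non-trivial happens — is the identification $r_*\mu_{|T|} = \nu_{|S|}$; this is not just a formal transposition because a priori the modulus is defined operator-theoretically while $r_*$ is defined measure-theoretically, and it is Theorem \ref{ot.t:rep-mod} (the nontrivial identity $|\mu_T| = \mu_{|T|}$) that bridges the two descriptions. Everything else is bookkeeping with the swap map $r$ and the uniqueness clause of Theorem \ref{ot.t:reg=int}.
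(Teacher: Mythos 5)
Your proof is correct, but it takes a genuinely different route from the paper's. The paper establishes order-boundedness of $T$ from that of $S$ via the contraction criterion of Theorem \ref{ot.t:ob-char}: from \eqref{ot.eq:rep-TS} one gets $\bigl|\int_L Tf\cdot g\,\ud{\mu}\bigr|\le\int_K\abs{f}\,(\linabs{S}\car)\,\ud{\nu}$ for $\abs{g}\le 1$, so $T$ extends to a contraction $\Ell{1}(K,(\linabs{S}\car)\nu)\to\Ell{1}(L,\mu)$; and it proves the modulus duality by a weighted-measure argument, showing $T$ is a contraction $\Ell{1}(K,(\linabs{S}g)\nu)\to\Ell{1}(L,g\mu)$ for each $0\le g\in\Ce(L)$ and using that the modulus of $T$ is unchanged when the target is enlarged to $\Ell{1}(L,g\mu)$. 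You instead route everything through the representing measures: order-boundedness of $S$ via the implication (iv)$\Rightarrow$(i) of Theorem \ref{ot.t:reg=int} applied to the swapped measure, and the modulus duality via the identity $\abs{\mu_T}=\mu_{\abs{T}}$ of Theorem \ref{ot.t:rep-mod} together with the (correct) fact that pushforward under a homeomorphism commutes with total variation. Both arguments are valid and non-circular, since Theorems \ref{ot.t:reg=int} and \ref{ot.t:rep-mod} precede the proposition and do not depend on it. Your version is shorter and more conceptual, and you correctly identify $\abs{\mu_T}=\mu_{\abs{T}}$ as the bridge between the operator-theoretic modulus and the measure-theoretic pushforward; the price is that it leans on the deepest result of the section, whereas the paper's version uses only Theorem \ref{ot.t:ob-char} and yields the quantitative by-product $\abs{T}'\!\mu\le(\linabs{S}\car)\nu$ along the way. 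One notational slip: with $r:L\times K\to K\times L$ as in the statement, the measure you call $r_*\mu_T$ should be the pushforward of $\mu_T$ under the swap $K\times L\to L\times K$; since the swap is an involution this does not affect the argument, but the direction should be stated consistently.
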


\begin{proof}
Suppose that $S$ is order-bounded. Then, for $f\in \Ce(K)$ and 
$g\in \Ce(L)$ with $\abs{g}\le 1$,
\[ \Big| \int_L Tf\cdot g\, \ud{\mu} \Big| \le \int_K \abs{f} \cdot \abs{Sg}
\, \ud{\nu}
\le \int_K \abs{f} (\linabs{S}\car)\ud{\nu}.
\]
Hence, $T$ extends to a contraction $\Ell{1}(K, (\linabs{S}\car) \nu) \to
\Ell{1}(L,\mu)$, whence, by Theorem \ref{ot.t:ob-char}, is order-bounded and 
$\abs{T}'\!\mu \le (\linabs{S}\car) \nu$. 
(Recall that the unit ball of $\Ce(L)$ is $\Ell{1}$-dense in the unit ball of
$\Ell{\infty}(L,\mu)$.)

\noindent
In order to prove the first of the two remaining claims, fix $0 \le g\in \Ce(L)$, and
let $f\in \Ce(K)$ and $u\in \Ce(K)$ with $\abs{u}\le 1$. Then 
\begin{align*} 
\bigl| \int_L Tf\cdot (gu)\, \ud{\mu} \bigr|
= \bigl| \int_K f\cdot S(gu)\, \ud{\nu}  \bigr|
\le \int_K \abs{f} \abs{S}\!g\, \ud{\nu}.
\end{align*}
Taking the supremum over all these  functions $u$, we obtain
\[ \int_L \abs{Tf} \cdot g\, \ud{\mu} \le 
\int_K \abs{f} \linabs{S}g\, \ud{\nu}.
\]
This means that $T$ extends to a contraction $T: 
\Ell{1}(K, (\linabs{S}g)\nu) \to \Ell{1}(L, g\mu)$. It follows that
$\abs{T}_g'\!(g\mu) \le (\linabs{S}g) \nu$, where $\abs{T}_g$ denotes
the modulus of $T$ considered as an operator $\Ce(K) \to \Ell{1}(L, g\mu)$. 
However, since $\Ell{1}(L,\mu)$ ``embeds''  onto an ideal of $\Ell{1}(L,g\mu)$,
it follows that $\abs{T}_g = \abs{T}$. Putting things together we obtain
\[ \int_L \linabs{T} f\cdot g\,\ud{\nu} 
=  \int_K f \, \ud\abs{T}_g'\!(g\mu) \le \int_K f \cdot \linabs{S}g\, \ud{\nu}
\]
for $0 \le f\in \Ce(K)$.  The converse inequality holds by symmetry, and 
the last remaining statement is obtained by 
integrating  both measures against functions of the form $f\tensor g$. 
\end{proof}

\noindent
Suppose that $T: \Ce(K) \to \Ell{1}(L,\mu)$ is order-bounded.
Then $\abs{\mu_T} = \mu_{\abs{T}}$ by Theorem \ref{ot.t:rep-mod}, whence
by standard integration theory there is
a $\mu_{\abs{T}}$-almost everywhere unique $\lambda \in \Ell{\infty}(K\times L;
\mu_{\abs{T}})$ with $\abs{\lambda}=1$ almost everywhere and 
\begin{equation}\label{ot.eq:lambda}  
\int_{K \times L} F(x,y) \, \ud{\mu_T} = \int_{K \times L} 
F(x,y) \lambda(x,y) \, \ud{\mu_{\abs{T}}}
\end{equation}
for all $F\in \Ell{1}(K\times L; \mu_{\abs{T}})$.
This leads to the following corollary for situation that $K = L$ and $\mu = \nu$.

\begin{cor}\label{ot.c:rep-sym}
Let $(K,\mu)$ be a compact measure space, let 
$T: \Ce(K) \to \Ell{1}(K,\mu)$ be an order-bounded operator, and let
$\lambda \in \Ell{\infty}(K\times K, \mu_{\abs{T}})$ with 
$\abs{\lambda} =1$ almost everywhere and such that \eqref{ot.eq:lambda} holds
for all $F\in \Ell{1}(K\times L; \mu_{\abs{T}})$. 
Suppose, in addition, that $T$ is symmetric, i.e., $T$ satisfies
\[ \int_K Tf \cdot \conj{g}\, \ud{\mu} =  \int_K f \cdot \conj{Tg}\, \ud{\mu}
\qquad (f,g\in \Ce(K)).
\]
Then  $\abs{T}$ is symmetric, too, and 
\[ \lambda(x,y) = \conj{\lambda(y,x)} \quad \text{for $\mu_{\abs{T}}$-almost all
$(x,y) \in K^2$}.
\]
\end{cor}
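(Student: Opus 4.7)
The plan is to deduce both conclusions from Proposition \ref{ot.p:rep-TS} applied to the pair $(T,\conj T)$, combined with a careful bookkeeping of complex conjugation of the measures $\mu_T$ and $\mu_{\abs T}$.

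First, I would rewrite the symmetry hypothesis in the form suited to Proposition \ref{ot.p:rep-TS}. Substituting $h = \conj g$ in the symmetry relation yields
\[
\int_K Tf\cdot h\,\ud{\mu} \,=\, \int_K f\cdot \conj T h\,\ud{\mu}\qquad (f,h\in \Ce(K)),
\]
where $\conj T h := \conj{T\conj h}$. By Lemma \ref{ot.l:mod-welldef}(c), $\conj T$ is order-bounded and $\abs{\conj T} = \abs T$. Proposition \ref{ot.p:rep-TS} (taking $S = \conj T$, $L = K$, $\nu = \mu$) therefore yields two facts: (a) the identity above persists when $T$ and $\conj T$ are replaced by $\abs T$; and (b) $\mu_T = r_* \mu_{\conj T}$, where $r\colon K\times K\to K\times K$ is the swap $r(x,y) = (y,x)$.

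From (a), $\int_K \abs T f\cdot g\,\ud\mu = \int_K f\cdot \abs T g\,\ud\mu$ for $f,g\in \Ce(K)$; since $\abs T$ is positive, we have $\abs T = \conj{\abs T}$, and so this is equivalent to $\int_K \abs T f\cdot \conj g\,\ud\mu = \int_K f\cdot \conj{\abs T g}\,\ud\mu$, i.e., $\abs T$ is symmetric. For the identity concerning $\lambda$, I would first identify $\mu_{\conj T}$ with the complex conjugate measure $\conj{\mu_T}$: a direct computation using the definition of $\conj T$ shows
\[
\int_{K^2} f\tensor g\,\ud{\mu_{\conj T}} \,=\, \int_K \conj T f\cdot g\,\ud\mu \,=\, \conj{\int_K T\conj f\cdot \conj g\,\ud\mu} \,=\, \conj{\int_{K^2} \conj f\tensor \conj g\,\ud{\mu_T}} \,=\, \int_{K^2} f\tensor g\,\ud{\conj{\mu_T}},
\]
so $\mu_{\conj T} = \conj{\mu_T}$ by density of $\Ce(K)\tensor \Ce(K)$ in $\Ce(K^2)$. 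Fact (b) therefore reads $\mu_T = r_*\conj{\mu_T}$.

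Passing to total variations and invoking Theorem \ref{ot.t:rep-mod} (which gives $\abs{\mu_T} = \mu_{\abs T}$ and $\abs{\conj{\mu_T}} = \abs{\mu_T}$), we obtain $\mu_{\abs T} = r_*\mu_{\abs T}$, i.e., the positive measure $\mu_{\abs T}$ is invariant under the coordinate swap. Substituting the polar decomposition $\ud\mu_T = \lambda\,\ud\mu_{\abs T}$ into both sides of the identity $\mu_T = r_*\conj{\mu_T}$ and using the $r$-invariance of $\mu_{\abs T}$, both $\lambda(x,y)$ and $\conj{\lambda(y,x)}$ appear as Radon--Nikod\'ym derivatives of $\mu_T$ with respect to $\mu_{\abs T}$; uniqueness of the density then yields $\lambda(x,y) = \conj{\lambda(y,x)}$ for $\mu_{\abs T}$-almost every $(x,y)\in K^2$. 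The only delicate point is keeping the conjugation/pushforward algebra straight; once (b) is rewritten as $\mu_T = r_*\conj{\mu_T}$, everything reduces to the uniqueness of Radon--Nikod\'ym derivatives together with the $r$-invariance that was just established.
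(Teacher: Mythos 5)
Your argument is correct and follows essentially the same route as the paper: both apply Proposition \ref{ot.p:rep-TS} with $S=\conj{T}$ (after rewriting the symmetry hypothesis in the form \eqref{ot.eq:rep-TS}) and use $\abs{\conj{T}}=\abs{T}$ to get symmetry of $\abs{T}$ and the $r$-invariance of $\mu_{\abs{T}}$, then conclude via uniqueness of the density $\lambda$. The paper dismisses the last step as ``straightforward''; your explicit identification $\mu_{\conj{T}}=\conj{\mu_T}$ and the Radon--Nikod\'ym uniqueness argument are exactly the intended details.
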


\begin{proof}
Note that, by hypothesis,  \eqref{ot.eq:rep-TS} 
holds with  $S = \conj{T}$, whence it holds for
$T$ and $S$ replaced by $\abs{T}$ and $\abs{S} = \abs{T}$, respectively. 
It follows that $\abs{T}$ is symmetric and that $r_*\mu_{\abs{T}}= \mu_{\abs{T}}$.
The last assertion is now straightforward. 
\end{proof}

\medskip
\noindent
The following is the main result of this section.  It has essentially
been proved by  Carbonaro and \Dragicevic\ \cite[p.22/23]{CarbDrag13Pre}.

\begin{thm}[Disintegration]\label{ot.t:disint} 
Let $(K,\mu)$ be a compact measure space, and let $T$ be 
a symmetric absolute contraction on $\Ell{1}(K,\mu)$.
Then
\begin{align*}
 \int_K & (\Id - T)f \cdot g \, \ud{\mu}  = 
\int_K (\Id - M_{\abs{T}\car})f \cdot g \, \ud{\mu} 
\\ & + 
\int_{K \times K}  \int_{\prZ_2} 
\Bigl[ \Id - 
\begin{pmatrix} 0 & \conj{\lambda(x,y)} \\
\lambda(x,y) & 0
\end{pmatrix}\Bigr] 
\begin{pmatrix} f(x) \\ f(y)\end{pmatrix}
\cdot \begin{pmatrix} g(x) \\ g(y)\end{pmatrix}
 \, \ud{\zeta_2} 
\,\ud{\mu_{\abs{T}}}(x,y)
\end{align*}
for all $f\in \Ell{p}(K, \mu)$, $g\in \Ell{q}(K,\mu)$, 
$1\le p \le \infty$. 
\end{thm}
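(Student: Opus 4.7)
The plan is to recognize the right-hand side as a disguised form of the bilinear representation $\int_K Tf \cdot g \, d\mu = \int_{K\times K} f \otimes g \, d\mu_T$ combined with the polar decomposition $d\mu_T = \lambda\,d\mu_{|T|}$. I would proceed as follows.

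First, I would verify that the hypotheses place us in the setting of the previous subsections: since $T$ is an absolute contraction on the finite measure space $(K,\mu)$, it maps $\Ce(K)$ into $\Ell{\infty}(K,\mu) \subseteq \Ell{1}(K,\mu)$ and is order-bounded (the constant function $\|f\|_\infty \cdot \mathbf{1}$ dominates $|Tg|$ whenever $|g|\le f$). Moreover, $T$ factors through $\Ell{\infty}$ and extends to an $\Ell{1}$-contraction, so Theorems~\ref{ot.t:reg=int}, \ref{ot.t:rep-mod} and \ref{ot.t:mu_T-Lp} apply, producing a measure $\mu_T \in \eM(K\times K)$ with $|\mu_T| = \mu_{|T|}$ and the integral identity $\int_K Tf \cdot g\,\ud\mu = \int_{K\times K} f\tensor g\,\ud\mu_T$ valid for $f\in \Ell{p}(K,\mu)$, $g\in\Ell{q}(K,\mu)$. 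Since $T$ is symmetric, Corollary~\ref{ot.c:rep-sym} yields a measurable unimodular function $\lambda$ on $K\times K$, satisfying the conjugation-symmetry $\lambda(x,y)=\conj{\lambda(y,x)}$ $\mu_{|T|}$-a.e., such that $\ud\mu_T = \lambda\,\ud\mu_{|T|}$ and $r_*\mu_{|T|}=\mu_{|T|}$.

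Next, I would evaluate the inner integral over $\prZ_2$ on the right-hand side. Interpreting ``$\cdot$'' as ordinary pointwise multiplication followed by integration against $\zeta_2$, a direct computation gives
\begin{align*}
\int_{\prZ_2}\Bigl[\Id - \begin{pmatrix}0 & \conj{\lambda(x,y)}\\ \lambda(x,y) & 0\end{pmatrix}\Bigr]\begin{pmatrix}f(x)\\ f(y)\end{pmatrix} \cdot \begin{pmatrix}g(x)\\ g(y)\end{pmatrix}\,\ud\zeta_2
&= \tfrac{1}{2}\bigl[f(x)g(x) + f(y)g(y)\bigr] \\
&\quad - \tfrac{1}{2}\bigl[\conj{\lambda(x,y)}f(y)g(x) + \lambda(x,y)f(x)g(y)\bigr].
\end{align*}
Integrating the first bracket against $\mu_{|T|}$ and using $r_*\mu_{|T|}=\mu_{|T|}$, the two symmetric halves coalesce into $\int_K f(x)g(x)\,\ud(\pi_{K*}\mu_{|T|})(x)$; but $\pi_{K*}\mu_{|T|} = (\linabs{T}\car)\,\mu$ by the definition of $\mu_{|T|}$, so this contributes $\int_K M_{|T|\car}f\cdot g\,\ud\mu$.

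For the cross term, I would apply the swap map $r(x,y)=(y,x)$: using $r_*\mu_{|T|}=\mu_{|T|}$ and $\lambda(x,y)=\conj{\lambda(y,x)}$, the integral of $\conj{\lambda(x,y)}f(y)g(x)$ against $\mu_{|T|}$ equals the integral of $\lambda(x,y)f(x)g(y)$. Hence the two halves again coalesce, and the cross term contributes $-\int_{K\times K} f\tensor g\,\lambda\,\ud\mu_{|T|} = -\int_{K\times K} f\tensor g\,\ud\mu_T = -\int_K Tf\cdot g\,\ud\mu$. Adding this to the $(\Id - M_{|T|\car})$-term on the right-hand side produces $\int_K f\cdot g\,\ud\mu - \int_K Tf\cdot g\,\ud\mu$, which is exactly the left-hand side.

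The routine part is the algebra; the only real subtlety is ensuring the identifications in Theorem~\ref{ot.t:mu_T-Lp} and the symmetry relations from Corollary~\ref{ot.c:rep-sym} apply to $T$ as given (rather than to its restriction to $\Ce(K)$), and checking that the $L^p$/$L^q$ extension justifies plugging in general $f$ and $g$. This reduces to invoking the ideal property $\Ell{\infty}(K,\mu) \subseteq \Ell{1}(K,\mu)$ and density of $\Ce(K)$ to pass from continuous test functions to arbitrary $f \in \Ell{p}$, $g \in \Ell{q}$, which is exactly what Theorem~\ref{ot.t:mu_T-Lp} was designed to deliver.
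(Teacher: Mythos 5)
Your proof is correct and follows essentially the same route as the paper: both rest on the identities $\ud\mu_T=\lambda\,\ud\mu_{\abs{T}}$, the swap-invariance $r_*\mu_{\abs{T}}=\mu_{\abs{T}}$ with $\lambda(x,y)=\conj{\lambda(y,x)}$, and Theorem~\ref{ot.t:mu_T-Lp} for the passage to general $f\in\Ell{p}$, $g\in\Ell{q}$. The only (immaterial) difference is direction: you expand the right-hand side and collapse it by symmetry, whereas the paper decomposes $\Id-T=(\Id-M_{\abs{T}\car})+(M_{\abs{T}\car}-T)$, writes the second piece in two swapped forms and averages them.
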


\begin{proof}
We first write  $\Id- T = (\Id - M_{\abs{T}\car}) +  (M_{\abs{T}\car}-T)$ and
then compute
\begin{align*}
\int_K & (M_{\abs{T}\car} - T)f \cdot g\, \ud{\mu}
= 
\int_K (\abs{T}\car)f \cdot g  \,\ud{\mu} - \int_K Tf\cdot g \,\ud{\mu}
 \\ &=
\int_{K^2} \car \tensor fg \, \ud{\mu_{\abs{T}}} - \int_{K^2}
f\tensor g \, \ud{\mu_T}
=
\int_{K^2} \car \tensor fg - (f\tensor g)\lambda \, \, \ud{\mu_{\abs{T}}}. 
\end{align*}
Since $T$ is symmetric and $\abs{\conj{T}}= \abs{T}$, also $\abs{T}$
is symmetric and $\mu_{\abs{T}}$ is a symmetric positive measure. 
Therefore, by a change of variable $(x,y) \mapsto (y,x)$ in the
formula from above,
\begin{align*}
\int_K & (M_{\abs{T}\car} - T)f \cdot g\, \ud{\mu}
=
\int_{K^2} fg \tensor \car - (g\tensor f) \conj{\lambda} \, \, \ud{\mu_{\abs{T}}}. 
\end{align*}
Taking the arithmetic average of this and the previous form we
obtain the claimed formula. 
\end{proof}

\begin{cor}\label{ot.c:disint-subM} 
Let $(K,\mu)$ be a compact measure space, and let $T$ be
a symmetric sub-Markovian operator on $\Ell{1}(K,\mu)$. Then
\begin{align*}
 \int_K  (\Id - T)f \cdot g \, & \ud{\mu}  = 
\int_K (\car - T\car)f \cdot g \, \ud{\mu} 
\\ & + 
\int_{K \times K}  \int_{\prZ_2} 
\begin{pmatrix} 1 & -1  \\
-1 & 1
\end{pmatrix}
\begin{pmatrix} f(x) \\ f(y)\end{pmatrix}
\cdot \begin{pmatrix} g(x) \\ g(y)\end{pmatrix}
 \, \ud{\zeta_2}  \,\ud{\mu_{T}}(x,y)
\end{align*}
for all $f\in \Ell{p}(K, \mu)$, $g\in \Ell{q}(K,\mu)$, 
$1\le p \le \infty$. 
\end{cor}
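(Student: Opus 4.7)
\begin{altproof}[Proof plan:]
The plan is to specialize the Disintegration Theorem (Theorem \ref{ot.t:disint}) to the sub-Markovian case, where essentially everything collapses to its ``positive'' version. The first task is to establish that for any positive operator $T \colon \Ce(K) \to \Ell{1}(K,\mu)$ the linear modulus coincides with $T$ itself. Indeed, for $0 \le f \in \Ce(K)$ and $u \in \Ce(K)$ with $\abs{u}\le f$, positivity yields $\abs{Tu} \le T\abs{u} \le Tf$, so the supremum defining $\linabs{T}f$ is dominated by $Tf$; the reverse inequality $Tf = \abs{Tf} \le \linabs{T}f$ comes from Lemma \ref{ot.l:mod-welldef}(a). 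Hence $\abs{T} = T$ and in particular $\abs{T}\car = T\car$, which accounts for the first term on the right-hand side.

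Next, since $\abs{T} = T$, the uniqueness part of Theorem \ref{ot.t:reg=int} gives $\mu_T = \mu_{\abs{T}}$. Consequently, the unimodular density $\lambda \in \Ell{\infty}(K\times K;\mu_{\abs{T}})$ that implements $\mu_T = \lambda\,\mu_{\abs{T}}$ via \eqref{ot.eq:lambda} must equal $1$ for $\mu_{\abs{T}}$-almost every $(x,y)\in K^2$. Substituting $\lambda \equiv 1$ into the $2\times 2$ kernel appearing in Theorem \ref{ot.t:disint} turns $\Id - E_{\lambda(x,y)}$ into the constant matrix
\[
\Id - \begin{pmatrix} 0 & 1 \\ 1 & 0 \end{pmatrix} \,=\, \begin{pmatrix} 1 & -1 \\ -1 & 1 \end{pmatrix},
\]
which no longer depends on $(x,y)$.

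The claim then follows by simply plugging these identifications into the disintegration formula: $M_{\abs{T}\car}$ becomes $M_{T\car}$, $\mu_{\abs{T}}$ becomes $\mu_T$, and the $(x,y)$-dependent matrix becomes the above constant one. There is no real obstacle here; the only point to watch is the justification that $\abs{T}=T$ for positive $T$, after which the corollary is a direct substitution into the previously established formula.
\end{altproof}
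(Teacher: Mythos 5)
Your proposal is correct and is essentially the argument the paper intends (the corollary is stated without proof as an immediate specialization of Theorem \ref{ot.t:disint}): for a positive $T$ one has $\abs{T}=T$, hence $\mu_T=\mu_{\abs{T}}$ and $\lambda\equiv 1$ almost everywhere, and substituting into the disintegration formula gives the claim. Your explicit verification that $\linabs{T}f = Tf$ for $0\le f$ is a worthwhile addition, and the rest is the direct substitution you describe.
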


\section{Proof of the Main Results}\label{s.prf}

Let us return to the proof of the main result, Theorem \ref{mr.t:main}. 
By the reduction steps from Section \ref{s.red}, 
one can suppose from the start that $\prX= (K,\mu)$ is a compact measure
space, $A= \Id - T$ for some symmetric absolute contraction on 
$\Ell{1}(K, \mu)$. In particular, 
the Disintegration Theorem   \ref{ot.t:disint} is applicable. 

Let, as in the hypothesis of Theorem \ref{mr.t:main}, $1\le p <\infty$, 
$d,\:m\in \N$ and $\Eff_j, \Gee_j: K \to \C^d$ be measurable functions
for $1 \le j \le m$. The assertion to prove is:

\smallskip
\noindent
{\em
For all measurable functions $\bff \in \calM(K,\mu; \C^d)$
such that $\Eff_j(\bff)\in \Ell{p}(K, \mu)$ and
$\Gee_j(\bff) \in \Ell{q}(K,\mu)$ for all $1\le j \le m$:
\[ 
\sum_{j=1}^m \re \int_K (\Id - T) \Eff_j(\bff) \cdot \Gee_j(\bff)\,\ud{\mu}
\,\,\ge\,\, 0. 
\]
}%
and we may suppose that this assertion holds when 
$(K,\mu)$ is replaced by $\prZ_2$, and $T$ is replaced by $E_\lambda$ for
each $\lambda\in \torus$. 

\begin{lem}\label{prf.l:scalarcase}
Under the given hypotheses, 
\begin{equation}\label{prf.eq:scalarcase}
 \Re \sum_{j=1}^m \Eff_j(x) \Gee_j(x) \ge 0 \quad \text{for all $x \in \C^d$}.
\end{equation}
\end{lem}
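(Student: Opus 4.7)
The plan is to obtain the pointwise inequality by simply evaluating the hypothesis of Theorem \ref{mr.t:main} on \emph{constant} $\C^d$-valued functions over the two-point space $\prZ_2$, choosing the parameter $\lambda \in \torus$ so that the rotation $E_\lambda$ contributes nontrivially.

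Fix $x \in \C^d$ and let $\bff \in \calM(\prZ_2;\C^d)$ be the constant function with value $x$. Since $\prZ_2$ is finite, $\Eff_j(\bff)$ and $\Gee_j(\bff)$ are constant (hence bounded, hence in every $\Ell{p}$ and $\Ell{q}$), so the hypothesis applies. A direct computation with
\[
E_\lambda \begin{pmatrix} \Eff_j(x) \\ \Eff_j(x)\end{pmatrix} = \begin{pmatrix} \conj{\lambda}\,\Eff_j(x) \\ \lambda \,\Eff_j(x)\end{pmatrix}
\]
gives, upon pairing with $\Gee_j(\bff)$ and integrating against $\zeta_2$,
\[
\int_{\prZ_2} (\Id - E_\lambda) \Eff_j(\bff)\cdot \Gee_j(\bff) \,\ud{\zeta_2} = \tfrac{1}{2}\bigl[(1-\conj{\lambda})+(1-\lambda)\bigr]\Eff_j(x)\Gee_j(x) = (1-\Re \lambda)\,\Eff_j(x)\Gee_j(x).
\]

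Summing over $j$ and taking real parts, the assumed inequality on $\prZ_2$ yields
\[
(1 - \Re \lambda)\,\Re \sum_{j=1}^m \Eff_j(x)\Gee_j(x) \ge 0 \qquad (\lambda \in \torus).
\]
Picking any $\lambda \in \torus$ with $\Re \lambda < 1$ (e.g.\ $\lambda = -1$, which gives factor $2$) yields the desired conclusion \eqref{prf.eq:scalarcase}. There is no substantive obstacle: the only thing to verify is the integrability side condition, which is trivial because $\prZ_2$ is finite and the test functions are constant.
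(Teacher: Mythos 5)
Your proof is correct and is essentially the paper's own argument: both test the $\prZ_2$-hypothesis on constant functions $\bff \equiv x$. The only cosmetic difference is that the paper first uses convexity in $T$ (averaging $E_1$ and $E_{-1}$ to reduce to $T=0$) before inserting constants, whereas you insert constants directly into the $E_\lambda$-inequality and divide out the positive factor $1-\Re\lambda$; the computations are identical in substance.
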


\begin{proof}
Note that the integral inequality is
{\em convex} in $T$, and that it holds trivially for $T= \Id$.
Since it holds for each $T=E_\lambda$, $\lambda \in \torus$,
it also holds for 
$T= \frac{1}{2}E_{1} + \frac{1}{2}E_{-1}= 0$. 
Given $(x_1, \dots, x_d) \in \C^d$, let $f_j := (x_j, x_j)^t \in\calM(\prZ_2)$
and inserting this into the inequality with $T= 0$ on $\prZ_2$ yields the claim.
\end{proof}

\noindent
Suppose now that 
$\bff  \in \calM(K, \mu; \C^d)$ such that 
$\Eff_j(\bff) \in \Ell{p}(K,\mu)$ and $\Gee_j(\bff) 
\in \Ell{q}(K,\mu)$. We can apply the Disintegration Theorem \ref{ot.t:disint} 
and obtain, for each $j= 1, \dots, m$
\begin{align*}
 \int_K & (\Id - T)\Eff_j(\bff) \cdot \Gee_j(\bff) \, \ud{\mu}  = 
\int_K (\car - \abs{T}\car)\Eff_j(\bff)\Gee_j(\bff) \, \ud{\mu}  
\\*  & + 
\int_{K \times K}  \int_{\prZ_2} 
\bigl(\Id - E_{\lambda(x,y)}\bigr) 
\begin{pmatrix} \Eff_j(\bff(x))\\ \Eff_j(\bff(y))\end{pmatrix}
\cdot 
\begin{pmatrix} \Gee_j(\bff(x))\\ \Gee_j(\bff(y))\end{pmatrix}
 \, \ud{\zeta_2} 
\,\ud{\mu_{\abs{T}}}(x,y).
\end{align*}
Now sum over $j$ and take the real part. Finally,  
apply Lemma \ref{prf.l:scalarcase}
for the first summand and the hypothesis over $E_{\lambda(x,y)}$
for the second to conclude that the result has to be $\ge 0$.
Hence, Theorem \ref{mr.t:main} is completely proved.

\medskip
\noindent
The corresponding results for symmetric sub-Markovian and Markovian semigroups
(Theorem \ref{mr.t:main-subM}, Theorem \ref{mr.t:main-M}) are proved similarly.
(Note that by the reduction steps in Section \ref{s.red} one only needs to show
the assertion for the case that $A= \Id - T$ where $T$ is a symmetric absolute contraction 
on a compact measure space $(K,\nu)$, and $T$ is sub-Markovian or Markovian, respectively. 

In the sub-Markovian case (Theorem \ref{mr.t:main-subM}), the hypothesis tells in particular that 
the statement is true for $T=0$ on $\prZ_2$, hence \eqref{prf.eq:scalarcase}
holds. Now apply Corollary \ref{ot.c:disint-subM} and proceed as before.

In the Markovian case, one has $T\car = \car$ and the first summand in the disintegration  
formula of Corollary \ref{ot.c:disint-subM} vanishes. This leads to Theorem \ref{mr.t:main-M}.
(Note that in the Markovian case, \eqref{prf.eq:scalarcase} is not a necessary condition any more.)

\section{Application: The Sector of Analyticity}\label{s.soa}

Let $(S_t)_{t\ge 0}$ be an absolute contraction semigroup over a measure
space $\prX$, and let $1 < p < \infty$. 
As a consequence of the Lumer--Phillips
theorem, the semigroup $(S_t)_{t\ge 0}$ extends
to an analytic contraction semigroup on $\Ell{p}(\prX)$ defined on the sector
\[ \Sigma_\vphi := \{ z\in \C\ohne 0 \st \abs{\arg z} < \vphi\}
\]
(where $0 < \vphi \le  \tfrac{\pi}{2}$) 
if and only if 
\begin{equation}\label{eq.analsgrp}
 \Re \int_\prX \ue^{\pm \vphi \ui} (Af) \cdot \conj{f} \abs{f}^{p{-}2} \, \ge 0
\qquad 
\end{equation}
for all $f\in \dom(A_p)$. For some time it had been an open question
whether, in the case that $(S_t)_t$ is a {\em symmetric} contraction
semigroup, inequality \eqref{eq.analsgrp} must hold for the angle
$\vphi =\vphi_p$, where
\begin{equation}\label{eq.vphi_p}
 \vphi_p := \arccos\abs{1 - \tfrac{2}{p}} = \arctan\frac{2\sqrt{p-1}}{\abs{p-2}}
\end{equation}
for $1 < p < \infty$. Such a result had been first established 
by Bakry \cite{Bakry1989} for a certain subclass of sub-Markovian symmetric 
semigroups and later extended to all sub-Markovian symmetric 
semigroups by Liskevich and Perelmuter \cite{LiskevichPerelmuter1995}.  That proof was subsequently 
improved by Nagel and Voigt \cite{NagelVoigt1996} and in that form
became part of Chapter 3 in Ouhabaz' book \cite{Ouhabaz2005}. The best
general result for all symmetric contraction semigroups had for a
long time been the one by Cowling \cite{Cowling1983}, when Kriegler finally
settled the case with a positive answer in \cite{Kriegler2011}. 
Carbonaro and \Dragicevic\ showed in \cite[Remark 35]{CarbDrag13Pre}
that the optimal angle can be obtained also from their results. 

We shall see in this section that the general symmetric case reduces
to the same scalar inequality as the sub-Markovian case. 
We apply Theorem \ref{mr.t:main} with  $d = m = 1$,
$F(x) = x$ and $G(x) = \ue^{\pm \ui \vphi} \conj{x}\abs{x}^{p-2}$
($G(0)= 0$).  This yields
the inequality
\[ \Re\Bigl( \ue^{\pm \ui \vphi} 
\begin{pmatrix} 1 & -\conj{\lambda} \\ -\lambda & 1\end{pmatrix}
\begin{pmatrix} z \\ w\end{pmatrix} \cdot_{\prZ_2}
\begin{pmatrix} z\abs{z}^{p-2} \\ w \abs{w}^{p-2}\end{pmatrix}
\Bigr) \ge 0
\]
for all choices of $z,\:w\in \C$ and $\lambda \in \torus$.
(Recall that $\cdot_{\prZ_2}$ denotes the sesquilinear inner product
on $\Ell{2}(\prZ_2)$.) 
If we replace $w$ by $\lambda w$ in this inequality, we obtain the
equivalent inequality 
\[ \Re\Bigl( \ue^{\pm \ui \vphi} 
\begin{pmatrix} 1 & -1 \\ -1 & 1\end{pmatrix}
\begin{pmatrix} z \\ w\end{pmatrix} \cdot_{\prZ_2}
\begin{pmatrix} z\abs{z}^{p-2} \\ w \abs{w}^{p-2}\end{pmatrix}
\Bigr) \ge 0.
\]
For $w= 0$ the inequality reduces to $\abs{z}^p \cos\vphi\ge 0$,
which poses no further restriction on $\vphi$. 
For $w \neq 0$ we can replace $z$ by $wz$ and find the equivalent inequality
\[ \Re\Bigl( \ue^{\pm \ui \vphi} 
\begin{pmatrix} 1 & -1 \\ -1 & 1\end{pmatrix}
\begin{pmatrix} z \\ 1\end{pmatrix} \cdot_{\prZ_2}
\begin{pmatrix} z
\abs{z}^{p-2} \\ 1\end{pmatrix}
\Bigr) \ge 0,
\]
i.e., 
\[ \Re \big(\ue^{\pm \ui \vphi} (z-1) ( \conj{z}\abs{z}^{p-2} -1) \big) \ge 0.
\]
 Reformulating this as an inequality between real and imaginary part
and letting $\vphi= \vphi_p$ as above 
reduces to  the inequality (2.1) in \cite{LiskevichPerelmuter1995}
which is proven there. (Actually, our argument
shows that the proof can be simplified  since there is only
one complex variable to deal with.)

\begin{cor}[Kriegler]
Let $-A$ be the generator of a symmetric contraction semigroup
$S = (S_t)_{t\ge 0}$ over some measure space $\prX$, and let $1 < p < \infty$. 
Then $S$ extends to an analytic semigroup of contractions 
on $\Ell{p}(\prX)$ on the sector $\Sigma_{\vphi_p}$.
\end{cor}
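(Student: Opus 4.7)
The plan is to invoke the Lumer--Phillips characterization recalled in \eqref{eq.analsgrp}: the extension of $S$ to an analytic contraction semigroup on $\Sigma_{\vphi_p}$ is equivalent to the form inequality
\[
\Re \int_\prX \ue^{\pm\ui\vphi_p}\, (Af)\cdot \conj{f}\abs{f}^{p-2} \,\ge\, 0
\]
holding for every $f\in \dom(A_p)$. I would then specialize Theorem \ref{mr.t:main} to $d=m=1$ with $\Eff(x)=x$ and $\Gee(x)=\ue^{\pm\ui\vphi_p}\conj{x}\abs{x}^{p-2}$ (setting $\Gee(0):=0$), which reduces the question to the corresponding two-dimensional inequality on $\prZ_2$ with $A$ replaced by $\Id - E_\lambda$, uniformly in $\lambda\in\torus$.

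Next, written out on $\prZ_2$ the required inequality is
\[
\Re\Bigl(\ue^{\pm\ui\vphi_p} \begin{pmatrix}1 & -\conj{\lambda}\\ -\lambda & 1\end{pmatrix}\begin{pmatrix}z\\ w\end{pmatrix} \cdot_{\prZ_2} \begin{pmatrix}z\abs{z}^{p-2}\\ w\abs{w}^{p-2}\end{pmatrix}\Bigr) \,\ge\, 0 \qquad (z,w\in\C).
\]
The decisive simplification is the substitution $w\mapsto \lambda w$: since $\abs{\lambda}=1$, the weight $\abs{w}^{p-2}$ is unchanged and the matrix becomes $\Id - E_1$, which entirely removes the $\lambda$-dependence. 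The resulting inequality is trivial for $w=0$, and for $w\neq 0$ the rescaling $z\mapsto wz$ further reduces matters to the one-complex-variable statement
\[
\Re\bigl(\ue^{\pm\ui\vphi_p}\, (z-1)(\conj{z}\abs{z}^{p-2} - 1)\bigr) \,\ge\, 0 \qquad (z\in\C).
\]

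Finally, I would invoke the fact that this is exactly inequality (2.1) of \cite{LiskevichPerelmuter1995}, proved there for the sharp angle $\vphi_p$ given in \eqref{eq.vphi_p} by splitting into real and imaginary parts and applying an elementary but delicate estimate. The expected main obstacle is thus not operator-theoretic but purely scalar: it is the Liskevich--Perelmuter inequality itself, which however is available off the shelf. The conceptual content of the argument is that Theorem \ref{mr.t:main}, combined with the absorbing substitution $w\mapsto\lambda w$, collapses the whole family of extreme contractions $\{E_\lambda : \lambda\in\torus\}$ onto a single scalar inequality, so that the general symmetric case incurs no additional cost over the sub-Markovian one handled in \cite{LiskevichPerelmuter1995,NagelVoigt1996,Ouhabaz2005}.
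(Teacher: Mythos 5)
Your proposal is correct and follows essentially the same route as the paper: the Lumer--Phillips reduction, the specialization of Theorem \ref{mr.t:main} with $d=m=1$, $F(x)=x$, $G(x)=\ue^{\pm\ui\vphi_p}\conj{x}\abs{x}^{p-2}$, the substitution $w\mapsto\lambda w$ to absorb $\lambda\in\torus$, the rescaling $z\mapsto wz$, and the appeal to inequality (2.1) of Liskevich--Perelmuter. Nothing essential is missing.
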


\vanish{
This inequality is invariant under replacing $z$ by $\conj{z}$, and
--- up to a positive factor --- by replacing $z$ by $1/z$. Since
for $z\in \R$ and $\abs{z} = 1$ the number $\zeta := (z-1) ( \conj{z}\abs{z}^{p-2} -1)$ is real, 
it suffices to study \eqref{...} for $\abs{z} < 1$ and $\im z  < 0$. In this
case, $\im (z-1) ( \conj{z}\abs{z}^{p-2} -1) = \im z ( \abs{z}^{p-2} -1) > 0$.
The critical angle $\vphi_p$ hence will be the minimal value of all
$\vphi_z \in [0, \upi/2]$, where 
\[ \Re \big(\ue^{\ui \vphi_z} (z-1) ( \conj{z}\abs{z}^{p-2} -1) \big) = 0
\]
for $\abs{z} < 1$ and $\im z < 0$. 
}

\appendix

\section{On Homomorphisms of Probability Spaces}\label{a.embed}

Suppose that $\prX= (X, \Sigma,\mu)$ and $\prX'= (X', \Sigma',\mu')$ are
probability spaces and
\[ \Phi: \Ell{1}(\prX) \to \Ell{1}(\prX')
\]
is a one-preserving isometric lattice 
homomorphism.\footnote{In \cite[Chap.{} 12]{EFHNPre},
this is called a {\em Markov embedding}. It is the functional-analytic analogue
of a {\em factor map} (=homomorphism in the category of
probability spaces) $\prX' \to \prX$.}
 This means that 
$\Phi$ is an isometric embedding for the $\Ell{1}$-norms, $\Phi(\car) = \car$
and $\abs{\Phi f} = \Phi\abs{f}$ for all $f\in \Ell{1}(\prX)$.

The positivity of $\Phi$ implies in particular that
$\Phi(\conj{f}) = \conj{\Phi f}$ for all $f\in \Ell{1}(\prX)$. 
Finally, 
\[ \int_\prX f  = \int_{\prX'} \Phi f
\]
for all $f\in \Ell{1}(\prX)$, since this is true for all $f\ge 0$.

\medskip

\noindent
In this appendix
we show  how to (canonically) extend $\Phi$ to a homomorphic 
(as lattices and $*$-algebras) embedding 
\[ \Phi: \calM(\prX) \to \calM(\prX')
\]
where  $\calM(\prX)$ and $\calM(\prX')$ denote the spaces of 
all measurable $\C$-valued functions modulo almost
everywhere equality on $X$ and  $X'$, respectively.
Note that $\calM(\prX)$ is a complete metric space with respect to the
metric
\[ \ud_\prX(f,g) := \int_{\prX} \frac{\abs{f-g}}{1 + \abs{f-g}}.
\]
The following lemma is the key property.

\begin{lem}\label{app.l:alghom}
In the situation from above, $\Phi$ restricts to an  embedding
of $C^*$-algebras $\Phi: 
\Ell{\infty}(\prX) \to  \Ell{\infty}(\prX')$.
Moreover, for any $f\in \Ell{1}(\prX)$, 
\[ \mu\set{ \abs{f} > 0} = \mu'\set{\abs{\Phi f}  > 0}
\]
In particular, $\set{f = 0}$ is a $\mu$-null set if and only if 
$\set{\Phi f = 0}$ is a $\mu'$-null set.  
\end{lem}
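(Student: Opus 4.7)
The plan is to prove assertion (2) first and then derive assertion (1) from its basic building block: that $\Phi$ sends indicators to indicators. Since $\abs{\Phi f} = \Phi\abs{f}$, for (2) I may replace $f$ by $\abs{f}$ and assume $0 \le f \in \Ell{1}(\prX)$. For such $f$ the sequence $g_n := nf \wedge \car$ increases pointwise almost everywhere to $\car_{\set{f > 0}}$, and since $g_n \le \car$ with $\mu(X) = 1 < \infty$, dominated convergence gives $g_n \to \car_{\set{f > 0}}$ in $\Ell{1}(\prX)$. The $\C$-linearity of $\Phi$, the lattice property $\Phi(u \wedge v) = \Phi u \wedge \Phi v$, and $\Phi(\car) = \car$ together yield $\Phi(g_n) = n\Phi(f) \wedge \car$, which in the same fashion increases in $\Ell{1}(\prX')$ to $\car_{\set{\Phi f > 0}}$. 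Since $\Phi$ is an $\Ell{1}$-isometry, hence continuous, I conclude $\Phi(\car_{\set{f > 0}}) = \car_{\set{\Phi f > 0}}$. Integrating both sides and using the trace identity $\int_\prX h = \int_{\prX'} \Phi h$ (valid for $h \ge 0$ and then by linearity) yields (2); assertion (3) is then immediate.

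For (1), boundedness on $\Ell{\infty}$ is automatic from $\abs{\Phi f} = \Phi\abs{f} \le \norm{f}_\infty \Phi(\car) = \norm{f}_\infty \car$, which also shows $\norm{\Phi f}_\infty \le \norm{f}_\infty$, and injectivity comes from the $\Ell{1}$-isometry. The key step will be multiplicativity. First I would verify that $\Phi(\car_A) = \car_B$ for some measurable $B$ in $\prX'$: applying $\Phi$ to the pair of identities $\car_A + \car_{A^c} = \car$ and $\car_A \wedge \car_{A^c} = 0$ yields $\Phi(\car_A) + \Phi(\car_{A^c}) = \car$ together with $\Phi(\car_A) \wedge \Phi(\car_{A^c}) = 0$, which forces $\Phi(\car_A)$ to take only the values $0$ and $1$ almost everywhere. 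Consequently, for $A, A' \in \Sigma$,
\[ \Phi(\car_A \cdot \car_{A'}) = \Phi(\car_A \wedge \car_{A'}) = \Phi(\car_A) \wedge \Phi(\car_{A'}) = \Phi(\car_A) \cdot \Phi(\car_{A'}), \]
the last equality holding because both factors are indicators. Linear extension gives multiplicativity on simple functions, and density of simple functions in $\Ell{\infty}$, combined with the $\Ell{\infty}$-contractivity of $\Phi$ and joint continuity of multiplication on $\Ell{\infty}$-bounded sets, promotes this to $\Phi(fg) = \Phi(f)\Phi(g)$ for all $f, g \in \Ell{\infty}(\prX)$. Preservation of the involution $\Phi(\conj{f}) = \conj{\Phi f}$ is already noted in the paragraph preceding the lemma and follows from positivity.

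The main obstacle is entirely algebraic: moving from lattice structure to multiplicative structure. Everything is concentrated in the fact that $\Phi$ carries indicators to indicators, and once that is secured by the $\car_A + \car_{A^c} = \car$ together with $\car_A \wedge \car_{A^c} = 0$ trick, the remainder of the argument is a formal combination of linearity, density, and continuity.
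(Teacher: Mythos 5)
Your proof is correct, and Part (2) is essentially the paper's argument (both compare $\mu\set{\abs{f}>0}$ with $\lim_n \int n\abs{f}\wedge\car$ and transport it through $\Phi$ via the trace identity). Where you genuinely diverge is the multiplicativity in Part (1). The paper reduces to $\Phi(f^2)=(\Phi f)^2$ for $f\ge 0$ by bilinearity and polarization, and then exploits the variational identity $x^2=\sup_{t\ge 0}(2tx-t^2)$, so that $f^2$ is a (norm-limit of finite) suprema of affine expressions $2tf-t^2\car$, each of which $\Phi$ preserves exactly; multiplicativity drops out in two lines with no measure-theoretic input. You instead isolate the combinatorial core: $\Phi(\car_A)+\Phi(\car_{A^c})=\car$ together with $\Phi(\car_A)\wedge\Phi(\car_{A^c})=0$ forces $\Phi(\car_A)$ to be an indicator, whence $\Phi$ is multiplicative on indicators (because product equals minimum there), then on simple functions by bilinearity, then on all of $\Ell{\infty}$ by uniform density and $\Ell{\infty}$-contractivity. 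Your route is more elementary in that it uses only finite lattice operations and standard density, and it makes visible the underlying Boolean-algebra homomorphism $A\mapsto\set{\Phi\car_A=1}$, which is conceptually useful (it is the Stone/von Neumann point-map picture). The paper's route is shorter and purely order-theoretic, avoiding any appeal to the measurable structure or to simple-function approximation, which is why it generalizes verbatim to abstract Banach lattices with unit. Both are complete; one small remark is that for "embedding of $C^*$-algebras" injectivity (which you get from the $\Ell{1}$-isometry) already implies $\Ell{\infty}$-isometry, so your one-sided bound $\norm{\Phi f}_\infty\le\norm{f}_\infty$ suffices.
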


\begin{proof}
It is clear that $\Phi$ restricts to a one-preserving isometric lattice
homomorphism between the respective $\Ell{\infty}$-spaces. So only the
multiplicativity $\Phi(fg)= (\Phi f) (\Phi g)$ is to be shown. 
This is well-known, see e.g. \cite[Chap.{} 7]{EFHNPre},
but we repeat the argument for the convenience of the reader. By bilinearity,
it suffices to consider $f,g \ge 0$. Then, by polarization, it
suffices to consider $f=g$, which reduces the problem to establish that
$\Phi(f^2) = (\Phi f)^2$. Now, for any $x\ge 0$, $x^2 = \sup_{t\ge 0} 2tx - t^2$. 
Hence, $f^2 = \sup_{t\ge 0} 2tf - t^2 \car$ in the Banach lattice sense. But
$\Phi$ is a lattice homomorphism and $\Phi\car = \car$, whence
\[ \Phi(f^2) = \Phi\bigl(\sup_{t\ge 0} 2tf - t^2 \car \bigr) = 
\sup_{t\ge 0} 2t(\Phi f) - t^2 \car = (\Phi f)^2.
\]
The remaining statement follows from: 
\begin{align*}
\mu\set{ \abs{f} > 0} & = \lim_{n \to \infty} \int_\prX (n \abs{f} \Inf \car) 
=
\lim_{n \to \infty} \int_{\prX'} \Phi( n \abs{f} \Inf \car) \\ 
& = \lim_{n \to \infty} \int_{\prX'} n \abs{\Phi f} \Inf \car = 
\mu'\set{\abs{\Phi f} > 0}.\qedhere
\end{align*}
\end{proof}

\noindent
Let $f\in \calM(\prX)$. Then the function 
$e := \frac{1}{1 + \abs{f}}$
has the property that $e, ef \in \Ell{\infty}(\prX)$. Moreover, 
by Lemma \ref{app.l:alghom}, $\set{\Phi e = 0}$ is   $\mu'$-null set. Hence,
$\Phi e $ is an invertible element in the algebra $\calM(\prX')$, 
and we can define
\[ \widehat{\Phi}f := \frac{\Phi(ef)}{\Phi e} \in\calM(\prX').
\]

\begin{lem}\label{app.l:Phihut}
The so-defined mapping $\widehat{\Phi}: \calM(\prX) \to \calM(\prX')$ 
has the following properties: 
\begin{aufzi}
\item $\widehat{\Phi}$ is an extension of $\Phi$.
\item $\widehat{\Phi}$ is a unital $*$-algebra and lattice homomorphism.
\item $\displaystyle \int_{\prX'} \widehat{\Phi} f  = 
\int_\prX f $ whenever
$0 \le f\in \calM(\prX)$.
\item $\widehat{\Phi}$ is an isometry with respect to the canonical metrics 
$\ud_{\prX}$ and $\ud_{\prX'}$. 

\item If $\Phi$ is bijective then so is $\widehat{\Phi}$.

\item The mapping  $\widehat{\Phi}: \calM(\prX) \to \calM(\prX')$  
is uniquely determined by the property that it extends $\Phi$ and 
it is multiplicative, i.e., satisfies $\widehat{\Phi}(fg) = \widehat{\Phi}f \cdot \widehat{\Phi}g$
for all $f,\:g \in \calM(\prX)$.
\end{aufzi}
\end{lem}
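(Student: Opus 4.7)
The plan is to exploit systematically the bounded truncation $e := 1/(\car + \abs{f}) \in \Ell{\infty}(\prX)$: since $e > 0$ everywhere, by Lemma \ref{app.l:alghom} the element $\Phi e$ is invertible in $\calM(\prX')$, while $ef \in \Ell{\infty}(\prX)$, and this allows us to pull every assertion about $\widehat{\Phi}f$ back to a statement about $\Phi$ on $\Ell{\infty}(\prX)$. The crucial preliminary step is a \emph{mixed multiplicativity lemma}: for all $e \in \Ell{\infty}(\prX)$ and $h \in \Ell{1}(\prX)$ one has $\Phi(eh) = \Phi(e)\,\Phi(h)$ in $\Ell{1}(\prX')$. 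This is proved by truncating $h_n := h\cdot \car_{\set{\abs{h}\le n}} \in \Ell{\infty}(\prX)$, applying Lemma \ref{app.l:alghom} to each product $eh_n$, and passing to the limit using that $\Phi$ is an $\Ell{1}$-contraction and that multiplication by the bounded function $\Phi(e)$ is $\Ell{1}$-continuous.

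\textbf{Items (a) and (b).} Part (a) is then immediate: for $f \in \Ell{1}(\prX)$ the mixed lemma gives $\widehat{\Phi}f = \Phi(ef)/\Phi e = \Phi(e)\Phi(f)/\Phi(e) = \Phi f$. From the mixed lemma I would next deduce the \emph{flexibility of the damper}: for every $e' \in \Ell{\infty}(\prX)$ with $\set{e' = 0}$ a $\mu$-null set and $e'f \in \Ell{1}(\prX)$, one has $\widehat{\Phi}f = \Phi(e'f)/\Phi e'$, because both expressions multiplied by $\Phi(e)\Phi(e')$ equal $\Phi(ee'f)$. Given $f, g \in \calM(\prX)$, I would then use the common damper $e := 1/\bigl((\car+\abs{f})(\car+\abs{g})\bigr)$, for which $ef,\, eg,\, e(f+g),\, efg$ are all in $\Ell{\infty}(\prX)$. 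With this single choice all four of $\widehat{\Phi}f,\: \widehat{\Phi}g,\: \widehat{\Phi}(f+g),\: \widehat{\Phi}(fg)$ are computed via the same $e$, so additivity reduces to linearity of $\Phi$, and multiplicativity to
\[
\widehat{\Phi}(fg) \cdot \Phi(e)^2 = \Phi(efg)\Phi(e) = \Phi(e^2 fg) = \Phi(ef)\Phi(eg) = \widehat{\Phi}f\cdot \widehat{\Phi}g\cdot \Phi(e)^2,
\]
via the mixed lemma together with multiplicativity of $\Phi$ on $\Ell{\infty}(\prX)$. The involution and lattice-homomorphism properties then follow at once from $\Phi \conj{u} = \conj{\Phi u}$ and $\Phi \abs{u} = \abs{\Phi u}$ applied to $u = ef$, combined with the positivity of $\Phi e$; unitality is the identity $\widehat{\Phi}\car = \Phi(\tfrac{1}{2}\car)/\Phi(\tfrac{1}{2}\car) = \car$.

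\textbf{Items (c) and (d).} For $f \ge 0$, writing $e := 1/(\car+f)$ gives the convenient expression $\widehat{\Phi}f = 1/\Phi(e) - \car$. Truncating by $f_n := f \wedge n\car \in \Ell{1}(\prX)$, the functions $e_n := 1/(\car+f_n)$ satisfy $e_n \searrow e$ with $0 \le e_n \le \car$, so dominated convergence on $\prX$ gives $e_n \to e$ in $\Ell{1}(\prX)$ and hence $\Phi e_n \to \Phi e$ in $\Ell{1}(\prX')$; combined with the monotonicity $\Phi e_n \ge \Phi e_{n+1} \ge \Phi e$ this forces $\mu'$-a.e.{} convergence, so that $\widehat{\Phi}f_n = 1/\Phi e_n - \car \nearrow \widehat{\Phi}f$ monotonically. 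Since $\widehat{\Phi}f_n = \Phi f_n$ by (a), monotone convergence combined with $\int_{\prX'} \Phi f_n = \int_\prX f_n$ yields (c). For (d), set $h := \abs{f-g}$; by (b), $\abs{\widehat{\Phi}f - \widehat{\Phi}g} = \widehat{\Phi}h$, and the unital algebra structure forces $\widehat{\Phi}\bigl(h/(\car+h)\bigr) = \widehat{\Phi}h/(\car+\widehat{\Phi}h)$. Since $h/(\car+h)\le \car$ lies in $\Ell{1}(\prX)$, (a) and the integral preservation give
\[
\ud_{\prX'}(\widehat{\Phi}f, \widehat{\Phi}g) = \int_{\prX'} \Phi\bigl(h/(\car+h)\bigr) = \int_\prX h/(\car+h) = \ud_\prX(f, g).
\]

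\textbf{Items (e), (f), and main obstacle.} For (f), if $\Psi$ is any multiplicative extension of $\Phi$ to $\calM(\prX)$, then for $f \in \calM(\prX)$ and $e := 1/(\car+\abs{f}) \in \Ell{1}(\prX)$ we have $\Psi(e) = \Phi(e)$ and $\Psi(ef) = \Phi(ef)$, so multiplicativity forces $\Phi(e)\Psi(f) = \Phi(ef)$, and invertibility of $\Phi(e)$ yields $\Psi(f) = \widehat{\Phi}f$. For (e), if $\Phi$ is bijective then $\Phi^{-1}$ is again a one-preserving isometric lattice homomorphism, with its own extension $\widehat{\Phi^{-1}}$; both compositions $\widehat{\Phi^{-1}} \circ \widehat{\Phi}$ and $\widehat{\Phi} \circ \widehat{\Phi^{-1}}$ are multiplicative extensions of the identity maps on $\calM(\prX)$ and $\calM(\prX')$ respectively, hence equal to those identities by (f). I expect the main technical hurdle to be the \emph{flexibility of the damper} used in paragraph two: without the freedom to replace $1/(\car+\abs{f})$ by a common $e$ adapted to both $f$ and $g$, neither the algebra-homomorphism computation in (b) nor the passage to $h/(\car+h)$ in (d) closes cleanly, and it is precisely to provide this flexibility that the mixed multiplicativity lemma is introduced at the outset.
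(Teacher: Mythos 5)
Your proposal is correct and follows essentially the same route as the paper: the whole lemma is driven by the invertibility of $\Phi e$ for the damper $e = 1/(\car+\abs{f})$, and your treatments of (c), (d) and (f) match the paper's (which leaves (a) and (b) to the reader; your mixed multiplicativity lemma and common-damper computation are a clean way to fill that in). The only genuine divergence is in (e): the paper constructs a preimage of a given $g\in\calM(\prX')$ explicitly, by pulling back $1/(\car+\abs{g})$ and $g/(\car+\abs{g})$ through $\Phi|_{\Ell{\infty}}$ and taking their quotient, whereas you deduce surjectivity abstractly by applying the uniqueness statement (f) to the multiplicative extensions $\widehat{\Phi^{-1}}\circ\widehat{\Phi}$ and $\widehat{\Phi}\circ\widehat{\Phi^{-1}}$ of the respective identity maps; both arguments are valid, and yours has the small aesthetic advantage of reusing (f) rather than repeating the damper construction.
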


\begin{proof}
a) and b)\ This is straightforward and left to the reader.\\
c)\ By the monotone convergence theorem,
\begin{align*}
  \int_\prX f  &= \sup_{n\in \N} \int_\prX (f\Inf n\car)\, 
= \sup_{n\in \N} \int_\prX \Phi(f\Inf n\car)\, 
=
\sup_{n\in \N} \int_{\prX'} \widehat{\Phi}(f\Inf n\car)
\\ &  = \sup_{n\in \N} \int_{\prX'} (\widehat{\Phi} f \Inf n\car)
= \int_{\prX'} \widehat{\Phi} f. 
\end{align*}
d)\ Follows from b) and c).\\
e)\  Suppose that $\Ell{\infty}(\prX') \subseteq \ran(\Phi)$ and
let $g\in \calM(\prX')$ be arbitrary.
Then, by Lemma \ref{app.l:alghom}, there are 
$e,h\in \Ell{\infty}(\prX)$ such that 
\[ \Phi e = \frac{1}{1 + \abs{g}}\quad \text{and}
\quad \Phi h = \frac{g}{1 + \abs{g}} = g\: \Phi e.
\]
Again by Lemma \ref{app.l:alghom}, $\mu\set{e = 0}= 0$, whence we can define
$f:= \frac{h}{e} \in \calM(\prX)$. It follows that $\Phi f= g$.\\
f) 
Suppose that 
$\Psi: \calM(\prX) \to \calM(\prX')$ is multiplicative and extends $\Phi$. 
Let  $f \in \calM(\prX)$ and define $e := \frac{1}{1 + \abs{f}}$ as before. Then 
$f,\: ef\in \Ell{\infty}(\prX)$ and hence 
\[ \Phi e \cdot \Psi f =  \Psi e \cdot \Psi f = \Psi(ef) = \Phi(ef).
\]
Since $\Phi e$ is an invertible element in $\calM(\prX')$ (as seen above), it follows that 
\[ \Psi f = \frac{\Phi(ef)}{\Phi e} = \widehat{\Phi}f
\]
as claimed. 
\end{proof}

\noindent
By abuse of notation, we write $\Phi$ again instead of $\widehat{\Phi}$.
It is clear that $\Phi$ allows a further extension to 
$\C^d$-valued functions by 
\[ \Phi(\bff) = \Phi(f_1, \dots, f_d) := (\Phi f_1, \dots, \Phi f_d)
\quad \text{for $\bff = (f_1, \dots, f_d) \in \calM(\prX;\C^d)$}.
\] 
Now we are well-prepared for the final result of this appendix.

\begin{thm}\label{embed.t:fc-gen}
Let $\prX$ and  $\prX'$ be probability
spaces, and let $\Phi: \Ell{1}(\prX) \to \Ell{1}(\prX')$
be a one-preserving isometric lattice isomorphism, with its
canonical extension $\Phi: \calM(\prX; \C^d) \to \calM(\prX';\C^d)$, $d\in \N$.
Then 
\begin{equation}\label{app.eq:fc}
 \Phi\bigl( F(\bff)\bigr)
= F(\Phi \bff)
\qquad \text{almost everywhere}
\end{equation}
for every Borel measurable function $F: \C^d \to \C$ and 
every $\bff  \in \calM(\prX;\C^d)$.
\end{thm}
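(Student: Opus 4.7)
The plan is to fix $\bff = (f_1, \dots, f_d) \in \calM(\prX; \C^d)$ once and for all and study the class
\[ \calH := \{ F: \C^d \to \C \,\,\text{Borel} \st \Phi(F(\bff)) = F(\Phi\bff) \,\,\text{a.e.} \}
\]
of those $F$ for which the claimed identity holds. The goal is to show $\calH$ contains every Borel function, using the standard strategy of exhibiting a generating algebra in $\calH$ and then closing under pointwise limits.

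First I would check that $\calH$ contains the $2d$ coordinate functions $z \mapsto z_j$ and $z \mapsto \conj{z_j}$ for $j = 1,\dots,d$. The former is trivial by the definition of $\Phi\bff$; the latter uses that the extended $\Phi$ is a $*$-homomorphism on $\calM(\prX)$ (Lemma \ref{app.l:Phihut}(b)). Since the same lemma gives that $\Phi$ is also multiplicative and unital, $\calH$ is stable under pointwise products and $\C$-linear combinations, and therefore contains every polynomial $P(z_1,\conj{z_1},\dots,z_d,\conj{z_d})$.

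The heart of the argument is that $\calH$ is closed under pointwise sequential limits, with no boundedness or domination needed. Suppose $F_n \in \calH$ and $F_n(z) \to F(z)$ for every $z\in \C^d$. Then $F_n(\bff) \to F(\bff)$ pointwise almost everywhere on $\prX$, and on a probability space this forces convergence in the metric $\ud_\prX$. By Lemma \ref{app.l:Phihut}(d), $\Phi$ is a $\ud$-isometry, so $\Phi(F_n(\bff)) \to \Phi(F(\bff))$ in $\ud_{\prX'}$; the analogous pointwise argument applied on $\prX'$ yields $F_n(\Phi\bff) \to F(\Phi\bff)$ in $\ud_{\prX'}$. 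Combined with the equality $\Phi(F_n(\bff)) = F_n(\Phi\bff)$, uniqueness of limits forces $\Phi(F(\bff)) = F(\Phi\bff)$, so $F\in \calH$.

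To conclude, I would invoke Stone--Weierstrass on each closed ball $\overline{B(0,n)} \subseteq \C^d$ to produce polynomials $P_n$ in $z_j,\conj{z_j}$ with $\abs{P_n - F}\le 1/n$ on $\overline{B(0,n)}$ for any given continuous $F$; a diagonal choice yields $P_n \to F$ pointwise on all of $\C^d$, placing every continuous function in $\calH$. Since $\C^d$ is a Polish space, the smallest class of complex functions containing $\Ce(\C^d)$ and closed under sequential pointwise limits is the Baire class, which here coincides with the Borel class. Hence $\calH$ contains every Borel $F$, as required. The only delicate point is the closure-under-pointwise-limits step: it is essential to work in the $\ud$-metric (equivalently, convergence in measure), since neither $\Ell{p}$-continuity nor a monotone/dominated convergence theorem is available without a priori bounds on the sequence $F_n$, and it is exactly the $\ud$-isometry of $\Phi$ supplied by Lemma \ref{app.l:Phihut}(d) that makes this possible.
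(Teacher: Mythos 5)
Your proof is correct, but it takes a genuinely different route from the paper's. The paper reduces, via linearity, the monotone truncation $F\Inf n\car \nearrow F$, and uniform approximation by simple functions, to the case $F=\car_B$ for a Borel set $B\subseteq\C^d$; it then runs a Dynkin-system argument over the sets $B$, uses multiplicativity of $\Phi$ to reduce rectangles to the one-dimensional real case $B=(a,\infty)$, and finishes with the explicit $\Ell{1}$-limit $\car_{\set{a<f}}=\lim_n n(f-a\car)^+\Inf\car$. You instead work entirely at the level of functions: a functional monotone-class argument in which polynomials in $z_j,\conj{z_j}$ lie in $\calH$ by the $*$-algebra homomorphism property, and $\calH$ is closed under arbitrary pointwise sequential limits because a.e. convergence on a probability space is $\ud$-convergence and $\Phi$ is a $\ud$-isometry (Lemma \ref{app.l:Phihut}(d)) --- this is the genuinely new observation, and it removes any need for domination, monotonicity, or reduction to indicators. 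The trade-off is in the final step: the paper's Dynkin-system argument is elementary and self-contained, whereas you must invoke the Lebesgue--Hausdorff theorem (Baire class $=$ Borel class on Polish spaces) to pass from continuous functions to all Borel functions; that fact is standard but is heavier descriptive-set-theoretic machinery than anything the paper uses (the paper only cites the analogous statement for \emph{bounded} Baire functions on compact spaces). Both proofs are sound; yours is arguably slicker and makes better structural use of Lemma \ref{app.l:Phihut}, while the paper's is more economical in its prerequisites.
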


\begin{proof}
By linearity we may suppose that $F \ge 0$. Next, by approximating
$F \Inf n \car \nearrow F$, we may suppose that $F$ is bounded. 
Then $F$ is a uniform limit of positive simple functions, whence
we may suppose without loss of generality that $F = \car_B$, where
$B$ is a Borel set in $\C^d$. In this case, \eqref{app.eq:fc} becomes
\[ \Phi\bigl( \car_{\set{(f_1, \dots, f_d) \in B}} \bigr)
= \car_{\set{(\Phi f_1, \dots, \Phi f_d) \in B}} \qquad \text{almost everywhere}.
\]
Let $\calB$ be the set of all Borel subsets of $\C^d$ that 
satisfy this. Then $\calB$ is a Dynkin system, so it suffices to
show that each rectangle is contained in $\calB$. 
Since $\Phi$ is multiplicative,
this reduces the case to $d=1$, $f$ is real valued and $B = (a, b]$. 
Now $\set{ a < f \le b} = \set{a < f } \cap \set{b < f}^c$, which
reduces the situation to $B = (a, \infty)$. Now
\[  \car_{\set{a < f}} = \text{$\Ell{1}$-}\lim_{n \to \infty}
n (f - a\car)^+ \Inf \car,
\]
and applying $\Phi$ concludes the proof. 
\end{proof}

\begin{rems}
\begin{aufziii}
\item As a consequence of Theorem \ref{embed.t:fc-gen}, 
$\Phi \abs{f}^p = \abs{\Phi f}^p$ for any $f\in \calM(\prX)$ and 
$p> 0$, so $\Phi$ restricts to an isometric isomorphism of $\Ell{p}$-spaces
for each $p > 0$. 
\item The extension of the original $\Ell{1}$-isomorphism $\Phi$
to $\calM(\prX)$ is uniquely determined by the requirement that 
$\Phi$ is continuous for the metrics $\ud_\prX$ and $\ud_{\prX'}$.

\item One can extend $\Phi$ to a lattice homomorphism 
\[ \Phi: \calM(\prX;[0, \infty]) \to \calM(\prX';[0,\infty])
\]
by defining $\Phi f := \tau^{-1} \nach \Phi(\tau \nach f)$, 
where $\tau: [0, \infty] \to [0,1]$ is any order-preserving bijection. 
Using this one can then show that $\Phi$ maps almost everywhere convergent
sequences to almost everywhere convergent sequences.
\end{aufziii}
\end{rems}

\noindent
{\bf Acknowledgement:}\ 
I am very grateful to  Tom ter Elst (Auckland) for his kind 
invitation in March/April 2014, and the pleasure of 
studying together the paper \cite{CarbDrag13Pre}. Furthermore, 
I thank Hendrik Vogt for some stimulating discussions and
the anonymous referee for his careful reading of the 
manuscript and some very useful remarks.

\bibliographystyle{acm} 

\def\cprime{$'$}

\end{document}